\documentclass[12pt]{amsart}

\usepackage[utf8]{inputenc}
\usepackage{amsmath,mathtools,amsthm,amsfonts,amssymb,verbatim}
\usepackage{color, colortbl} 
\usepackage{enumitem} 
\usepackage{url}
\usepackage{tikz} 
\usepackage{diagbox}
\usetikzlibrary{patterns}
\usetikzlibrary{arrows,shapes,positioning,backgrounds,calc}
\usetikzlibrary{patterns.meta}

\usepackage[letterpaper]{geometry} 
\geometry{margin=1in}
\usepackage{caption}
\usepackage[labelformat=simple,labelfont={}]{subcaption}

\theoremstyle{definition} 
 
\newtheorem{example}[equation]{Example}
\newtheorem{corollary}[equation]{Corollary} 
 
\newtheorem{lemma}[equation]{Lemma}
\newtheorem{proposition}[equation]{Proposition}

\theoremstyle{definition}
\newtheorem{definition}[equation]{Definition}

\newtheorem{remark}[equation]{Remark}

\numberwithin{equation}{section}

\definecolor{mjo}{rgb}{0,0,.9}
\newcommand{\multiset}[1]{\{\!\!\{#1 \}\!\!\}}

\DeclareMathOperator{\Opt}{Opt}
\DeclareMathOperator{\Con}{Con}
\global\long\def\mex{\operatorname{mex}}
\global\long\def\nim{\operatorname{nim}}

\DeclareMathOperator{\fbd}{fbd}

\usepackage{relsize}
\newcommand{\boxprod}{\mathop{\textstyle\mathsmaller{\square}}}

\usetikzlibrary{arrows}
\usetikzlibrary{shapes} 
\usetikzlibrary{patterns}
\tikzstyle{vert} = 
[circle, draw, fill=grey!40,inner sep=0pt, minimum size=5mm]
\tikzstyle{small vert} = 
[circle, draw,fill=grey!40, inner sep=2pt, minimum size=2.7mm]
\tikzstyle{rect vert} = 
[rectangle, rounded corners, draw, fill=grey!40,inner sep=2.7pt, minimum size=4mm]
\tikzstyle{b} = [draw, very thick, black,-]
\tikzstyle{d} = [draw, thick, black,-stealth]
\tikzstyle{a} = [draw, very thick, black,-stealth]

\definecolor{grey}{rgb}{.7, .7, .7}

\definecolor{orange}{RGB}{255,102,0}
\definecolor{ggreen}{RGB}{0,153,0}
\definecolor{darkblue}{RGB}{0,0,255}
\definecolor{purple}{RGB}{153,51,255}
\definecolor{turq}{RGB}{72,209,204}
\definecolor{gray}{RGB}{220,220,220}
\definecolor{orange2}{RGB}{255,100,0}
\definecolor{purple2}{RGB}{159,51,250}
\definecolor{rred}{rgb}{0.9, 0.17, 0.31}
\definecolor{naugreen}{cmyk}{.43,0,.34,.38}
\definecolor{naublue}{cmyk}{1,.72,0,.32}
\definecolor{mediterranean}{cmyk}{.67,0,.08,.3}
\definecolor{rose}{cmyk}{0,1.00,.20,0}
\definecolor{darkorchid}{cmyk}{.6,.9,0,.05}
\definecolor{butterfly}{cmyk}{.95,.59,0,.10}
\definecolor{springgreen}{cmyk}{1.00,0,.70,.02}
\definecolor{darkred}{cmyk}{0,1,1,.5}
\definecolor{nectarine}{cmyk}{0,0.70,1.00,0}
\definecolor{icyblue}{cmyk}{.84,.25,0,.06}
\definecolor{manatee}{rgb}{0.59, 0.6, 0.67}

\title{Categories of impartial rulegraphs and gamegraphs}
\author{Bojan Ba\v{s}i\'{c}}
\address{Department of Mathematics and Informatics, University of Novi Sad, Trg Dositeja Obra\-dovi\'ca 4, 21000 Novi Sad, Serbia}
\email{bojan.basic@dmi.uns.ac.rs}
\author{Paul Ellis}
\address{Department of Mathematics, Rutgers University, 110 Frelinghuysen Rd., Piscataway, NJ 08854-8019, USA}
\email{paulellis@paulellis.org}
\author{Dana C.~Ernst}
\address{Northern Arizona University, Department of Mathematics and Statistics, Flagstaff, AZ 86011-5717, USA}
\email{dana.ernst@nau.edu}
\author{Danijela Popovi\'{c}}
\address{Mathematical Institute of the Serbian Academy of Sciences and Arts, Kneza Mihaila 36, 11000 Belgrade, Serbia}
\email{danijela.mitrovic@dmi.uns.ac.rs}
\author{N\'{a}ndor Sieben}
\address{Northern Arizona University, Department of Mathematics and Statistics, Flagstaff, AZ 86011-5717, USA}
\email{nandor.sieben@nau.edu}

\date{\today}
\subjclass[2020]{91A46, 91A43, 05C57, 08A30}
\keywords{option preserving map, congruence relation, minimum quotient, valuation}

\begin{document}

\maketitle

\begin{abstract}
The traditional mathematical model for an impartial combinatorial game is defined recursively as a set of the options of the game, where the options are games themselves. We propose a model called gamegraph, together with its generalization rulegraph, based on the natural description of a game as a digraph where the vertices are positions and the arrows represent possible moves. Such digraphs form a category where the morphisms are option preserving maps. We study several versions of this category. Our development includes congruence relations, quotients, and isomorphism theorems and is analogous to the corresponding notions in universal algebra. The quotient by the maximum congruence relation produces an object that is essentially equivalent to the traditional model. After the development of the general theory, we count the number of non-isomorphic gamegraphs and rulegraphs by formal birthday and the number of positions. 
\end{abstract}

\section{Introduction}

Standard development of combinatorial game theory uses a model in which a game is a set of its options and making a move is the selection of an element from this set. The game ends when there are no available moves because the current position is the empty set. This approach results in a well-behaved mathematical object that captures the essence of the game. However, in practice this is not how we describe a game. We usually explain the rules for moving from one position of the game to another. The mathematical object that fits this description is a certain digraph that we call a rulegraph. To play an actual game, we also need a starting position. We call the resulting object a gamegraph.

It is not immediately clear how these two approaches fit together. One resolution is to identify every vertex with the set of its options. This is a recursive process that usually starts at the terminal positions. In particular, we identify all the terminal positions and it becomes the empty set. Then we identify all the positions whose only option is the empty set, and so on. This identification process builds a minimum quotient that is equivalent to the standard definition of a game. Yet during this process a lot of information is lost that is likely useful for human players because it provides intuition. Chess players, for example, learn a large number of mate positions and do not think of the end of the game as the empty set. In fact, libraries are full of books that contain actual chess positions as pieces on the squares of the board.

Formally, we model games as a category where the objects are gamegraphs and the morphisms are option preserving maps. Our approach provides a uniform theory that captures both the practical aspects of playing concrete games, as well as the technical notion of a game. Our model allows for the creation of outcome-preserving quotient gamegraphs including the minimum quotient of the game that is equivalent to the standard definition of a game, as well as intermediary quotients that remove some unimportant information while preserving enough information that guides our intuition and makes the analysis of the game easier. An illustration is Grundy's game, where players divide one of the available heaps into two non-equal parts. As heaps of size $1$ or $2$ are irrelevant for the rest of the game, we can identify all the positions that differ only in such heaps. As we will see in Example~\ref{ex:grundy}, the minimum quotient of this game requires further identifications of positions that is less useful for gaining insight.

Our development mimics that of universal algebra. Indeed, we establish two results that are direct analogs to the First and Fourth Isomorphism Theorems for universal algebras (see Propositions~\ref{prop:fundamental homomorphism theorem} and~\ref{prop:fourth iso theorem}). Moreover, we have a notion of simple that coincides precisely with definition given in the context of universal algebra, namely, a structure is simple if it has no nontrivial congruence relations. One important distinction between our development and that of universal algebra is that we do not have a notion of a trivial quotient where every position is identified since our digraphs cannot be ``collapsed vertically" (see Proposition~\ref{prop:noVerticalCollapse}). Instead, each of our digraphs has a unique minimum quotient (see Proposition~\ref{prop:unique simplification}) that is simple.

A comprehensive treatment of the standard theory of impartial games can be found in~\cite{albert2007lessons,ONAG,SiegelBook}. Conway writes \cite[Chapter 11]{ONAG} the following about impartial games: ``we identify each game with the set of its options". Our paper attempts to spell out the details of this sentence.  

\section{Preliminaries}

A \emph{digraph} $D$ is a pair $(V(D),E(D))$, where $V(D)$ is the set of \emph{vertices} and $E(D)\subseteq V(D)\times V(D)$ is the set of \emph{arrows}. If $S\subseteq V(D)$ is any subset of vertices of $D$, then we define the \emph{subdigraph induced by $S$} to be the digraph whose vertex set is $S$ and whose arrow set consists of all arrows in $E(D)$ that have endpoints in $S$.  For an arrow $(u,v)$ we say that $u$ is the \emph{in-neighbor} of $v$ and that $v$ is the \emph{out-neighbor} of $u$. 
A vertex is called a \emph{source} if it has no in-neighbors and a sink if it has no out-neighbors. 
A \emph{directed walk} in digraph $D$ is a finite or infinite sequence of vertices $v_1,v_2,\dots$ such that $(v_i,v_{i+1})$ is an arrow for all $i\geqslant 1$. 


A \emph{digraph map} $\alpha:C\to D$ between digraphs $C$ and $D$ is a function $\alpha:V(C)\to V(D)$.
A \emph{homomorphism} is a digraph map $\alpha:C\to D$ such that $(u,v)\in E(C)$ implies $(\alpha(u),\alpha(v))\in E(D)$. 
Digraphs and homomorphisms form a concrete category $\mathbf{Gph}$.


Any function $f:X\to Y$ gives rise to an equivalence relation $\sim$, called the \emph{kernel} of $f$, defined on $X$ by $u \sim v$ if and only if $f(u)=f(v)$. Of course, this induces a partition on $X$. We denote the corresponding quotient by $X/f$.

\section{Impartial rulegraphs and gamegraphs} \label{sec:impartial}

In this paper, we focus exclusively on impartial games, so that all games are implicitly assumed to be impartial. We are going to model games with special digraphs. So we will call the vertices of a digraph \emph{positions} and the set $\Opt(p)$ of out-neighbors of a position $p$ the \emph{options} of $p$.

\begin{definition}
A \emph{rulegraph} $\mathsf{R}$ is a digraph with no infinite directed walk. 
\end{definition}

A rulegraph is called \emph{finite} if it has finitely many positions. A position $p$ is called \emph{terminal} if $\Opt(p)=\emptyset$. We say that $q$ is a \emph{subposition} of $p$ if there is a directed walk from $p$ to $q$.  Note that we allow walks of length zero, so that every position is a subposition of itself.

\begin{definition}
A \emph{gamegraph} $\mathsf{G}$ is a rulegraph that has a unique source vertex called the \emph{starting position} and every position is a subposition of the starting position.
\end{definition}

During a \emph{play} on the gamegraph $\mathsf{G}$, two players take turns replacing the current position with one of its options. At the beginning of the play the current position is the starting position. The play ends when the current position becomes a terminal position. Thus, a play is essentially a directed walk from the starting position to one of the terminal positions. 

To determine the winner at the end of a play we need a winning condition. This amounts to classifying each terminal position as winning or losing. Commonly used such assignments are \emph{normal play} where the player who moves last wins, and \emph{mis\`{e}re play} where the player who moves last loses. The choice of the winning condition determines for each position which player has a winning strategy starting at that position. Such a choice determines an \emph{outcome function} $o:V(\mathsf{R})\to \{P,N\}$ on the positions of a rulegraph $\mathsf{R}$, where $P$ and $N$ indicate whether the previous or the next player wins from position $p$. The outcome of $\mathsf{G}$ is $o(\mathsf{G}):=o(s)$, where $s$ is the starting position of $\mathsf{G}$.

\begin{proposition}
\label{prop:reachable}
Every position of a finite rulegraph $\mathsf{R}$ with a unique source is a subposition of the starting position.
\end{proposition}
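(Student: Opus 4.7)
The plan is to fix an arbitrary position $p\in V(\mathsf{R})$ and build a directed walk ending at $p$ that starts at the unique source $s$; reversing the perspective, I will construct a \emph{backward} walk from $p$ and argue that it must terminate at $s$.

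Concretely, I would recursively define a sequence $p=p_0,p_1,p_2,\dots$ by letting $p_{i+1}$ be any in-neighbor of $p_i$, whenever $p_i$ has an in-neighbor. If at some stage $p_n$ has no in-neighbor, then $p_n$ is a source, and by the uniqueness hypothesis $p_n=s$; reading the sequence $p_n,p_{n-1},\dots,p_0$ forward then gives a directed walk from $s$ to $p$, as desired. (The case $p=s$ is handled by the length-zero walk.) So the content of the proof is that the backward construction cannot continue forever.

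The main step is exactly this termination argument. Suppose for contradiction that the sequence $p_0,p_1,p_2,\dots$ is infinite. Because $V(\mathsf{R})$ is finite, some vertex must repeat, say $p_j=p_k$ with $j<k$. Then $(p_{i+1},p_i)\in E(\mathsf{R})$ for each $i$, so reading the segment from index $k$ down to index $j$ forward yields a directed walk $p_k\to p_{k-1}\to\cdots\to p_j=p_k$, i.e., a directed cycle through $p_k$. Traversing this cycle repeatedly produces an infinite directed walk in $\mathsf{R}$, contradicting the defining property of a rulegraph. Hence the backward process must terminate, completing the proof.

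The only mild subtlety I anticipate is keeping straight that the hypothesis ``no infinite directed walk'' concerns forward walks, while the construction proceeds backward; the cycle-based argument above bridges the two directions cleanly, and finiteness of $V(\mathsf{R})$ is essential in forcing a repetition. Uniqueness of the source is used only at the very end, to identify the terminal vertex of the backward walk with the designated starting position $s$.
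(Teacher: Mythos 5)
Your proof is correct and follows essentially the same route as the paper's: both build a backward walk from $p$, use finiteness plus the absence of infinite directed walks (via the repeated-vertex/cycle observation) to force termination, and then invoke uniqueness of the source to identify the endpoint with the starting position.
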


\begin{proof}
Let $p$ be a position of $\mathsf{R}$. Traveling backward along the arrows, we can find a maximal sequence $q_0,q_1,q_2,\ldots$ of positions such that $q_0=p$ and $q_n\in\Opt(q_{n+1})$ for all $n$. The sequence does not have any position more than once since the rulegraph has no infinite walks. Hence the sequence must be finite since the rulegraph is finite. The last position in the sequence must be the unique source since the sequence is maximal.  
\end{proof}

\begin{remark}
A consequence is that a finite rulegraph with a unique source is automatically a gamegraph. We do not need to check that every position is a subposition of the starting position.
\end{remark}

\begin{example}
The previous proposition is not true without the finiteness assumption. The rulegraph depicted in Figure~\ref{fig:ruleset not game} has a unique source but only two positions are subpositions of the starting position. Thus, this rulegraph is not a gamegraph. 
\end{example}

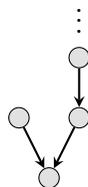
\begin{figure}[ht]
\centering
\begin{tikzpicture}[xscale=.4,yscale=.8,auto]
\node (1) at (3,2) [small vert] {};
\node (2) at (1,1) [small vert] {};
\node (3) at (2,0) [small vert] {};
\node (4) at (3,1) [small vert] {};
\node at (3,2.75) {$\vdots$};
\path [d] (1) to (4);
\path [d] (2) to (3);
\path [d] (4) to (3);
\end{tikzpicture}
\caption{
\label{fig:ruleset not game}
A rulegraph with a unique source that is not a gamegraph.
}
\end{figure}

Since a rulegraph $\mathsf{R}$ has no infinite walks, no position is a proper subposition of itself. Each position $p$ of $\mathsf{R}$ determines a gamegraph $\mathsf{G}=\mathsf{R}_p$ which is the subdigraph of $\mathsf{R}$ induced by the subpositions of $p$. If $p$ is the initial position of a gamegraph $\mathsf{G}$, then $\mathsf{G}_p$ is of course $\mathsf{G}$. 

For a rulegraph $\mathsf{R}$, let $\Gamma(\mathsf{R}):=\{\mathsf{R}_p\mid p\in V(\mathsf{R})\}$. We say that $\mathsf{R}$ is a \emph{rulegraph for} a collection $\mathcal{G}$ of gamegraphs if each $\mathsf{G}$ in $\mathcal{G}$ is isomorphic to $\mathsf{R}_p$ as digraphs for some $p$. Note that $\mathsf{R}$ is a rulegraph for the collection $\Gamma(\mathsf{R})$, and the disjoint union $\bigsqcup \mathcal{G}$ is a rulegraph for $\mathcal{G}$.

The \emph{minimum excludant} $\mex(A)$ of a set $A$ of ordinals is the smallest ordinal not contained in the set. The \emph{nim-number} $\nim(p)$ of a position $p$ is the minimum excludant of the set of nim-numbers of the options of $p$. That is, 
\[
\nim(p):=\mex(\nim(\Opt(p)).
\]
The minimum excludant of the empty set is $0$, so the terminal positions of a gamegraph have nim-number $0$. 
The \emph{nim-number} $\nim(\mathsf{G})$ of a gamegraph $\mathsf{G}$ is the nim-number of its starting position. 

The nim-number of a gamegraph determines the outcome of a gamegraph under normal play  since a position $p$ is a $P$-position if and only if $\nim(p)=0$. 

The \emph{formal birthday of a position} $p$ of a rulegraph $\mathsf{R}$ is the ordinal defined recursively via
\[
\fbd(p):=\sup\{\fbd(q)+1 \mid q\in \Opt(p)\},
\]
with the interpretation that $\sup(\emptyset):=0$, so that $\fbd(p)=0$ if $p$ is a terminal position. The \emph{formal birthday of the rulegraph} $\mathsf{R}$ is $\fbd(\mathsf{R}):=\sup\{\fbd(p) \mid p\in V(\mathsf{R})\}$. Note that in the case of a gamegraph $\mathsf{G}$ with source $s$, we have $\fbd(\mathsf{G})=\fbd(s)$.

The \emph{box product} of two digraphs $C$ and $D$, denoted $C \boxprod D$, is the digraph whose vertex set is $V(C) \times V(D)$ and there is an arrow from $(x_1, y_1)$ to $(x_2, y_2)$ provided either
\begin{enumerate}
 \item $x_1=x_2$ and there is an arrow from $y_1$ to $y_2$ in $D$, or
 \item $y_1=y_2$ and there is an arrow from $x_1$ to $x_2$ in $C$.
\end{enumerate} 

The \emph{sum} $\mathsf{R}+\mathsf{S}$ of the rulegraphs  $\mathsf{R}$ and $\mathsf{S}$ is the product digraph $\mathsf{R}\boxprod\mathsf{S}$. See Figure~\ref{fig:sum of slims} for an example. This means that in each turn a player makes a valid move either in rulegraph $\mathsf{R}$ or in rulegraph $\mathsf{S}$ and a play sum ends when both plays end. The nim-number of the sum of two gamegraphs $\mathsf{G}$ and $\mathsf{H}$ can be computed as the \emph{nim-sum} 
\[
\nim(\mathsf{G}+\mathsf{H})=\nim(\mathsf{G})\oplus\nim(\mathsf{H}),
\]
which requires binary addition without carry. 

\begin{example}
\label{exa:NIM}
For each ordinal $\xi$, we define one-pile NIM with $\xi$ stones to be the gamegraph $\star\xi$ with set of positions $\{0,1,\ldots,\xi\}$ and option function defined by $\Opt(\zeta):=\zeta$. Recall that $\omega$ is the first infinite ordinal. One possible rulegraph for the collection of all finite one-pile NIM gamegraphs is $\bigsqcup_{n\in\omega} \star n$ and another is $\star\omega$ with its starting position $\omega$ removed. In the former rulegraph each game is represented infinitely many times, while in the latter these representatives are unique. The Sprague--Grundy Theorem~\cite{albert2007lessons,SiegelBook} implies that $\nim(\star\xi)=\xi$.
Note that our definition of $\star\xi$ is different from the traditional definition of the nimber $*\xi$ as $\star\xi$ is a digraph.
\end{example}

\section{Option preserving maps}\label{sec:option preserving}

A map $\alpha$ between rulegraphs or gamegraphs is called a \emph{rulegraph map} or \emph{gamegraph map}, respectively. 

\begin{definition}
A digraph map $\alpha:C\to D$ is \emph{option preserving} if  $\Opt(\alpha(p))=\alpha(\Opt(p))$ for each $p\in V(C)$. 
\end{definition}

\begin{example}
Figure~\ref{fig:OptPreserving} shows an option preserving rulegraph map $\alpha:\mathsf{R}\to\mathsf{S}$. Note that $\mathsf{S}$ is actually a gamegraph.  The map $\alpha$ is indicated in the figure using matching colors and shapes.
\end{example}

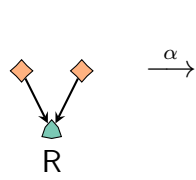
\begin{figure}[ht]
\begin{tikzpicture}[scale=.8]
\node (02) [small vert,fill=orange!50,star,star points=4] at (-1,1) {};
\node (01) [small vert,fill=orange!50,star,star points=4] at (0,1) {};
\node (00) [small vert,fill=springgreen!50,star,star points=3
] at (-.5,0) {};
\path [d] (02) to (00);
\path [d] (01) to (00);
\node (R) at (-.5,-.5) {$\mathsf{R}$};
\end{tikzpicture}
$\quad$
\raisebox{1.4cm}{\hbox{$\overset\alpha\longrightarrow$}}
$\quad$
\begin{tikzpicture}[scale=.8]
\node (a) [small vert] at (3,2) {};
\node (b) [small vert,fill=orange!50,star,star points=4] at (3,1) {};
\node (d) [small vert,fill=springgreen!50,star,star points=3] at (3,0) {};
\path [d] (a) to (b);
\path [d] (b) to (d);
\node (S) at (3,-.5) {$\mathsf{S}$};
\end{tikzpicture}

\caption{
\label{fig:OptPreserving}
An option preserving rulegraph map.}
\end{figure}

\begin{definition}
A gamegraph map $\alpha:\mathsf{G}\to\mathsf{H}$ is \emph{source preserving} if it takes the starting position of $\mathsf{G}$ to the starting position of $\mathsf{H}$.
\end{definition}

Option preserving maps often come from symmetries.

\begin{example}\label{ex:44}
The positions of NIM played on several piles can be modelled using tuples or multisets. The mapping $\alpha:\mathsf{G}\to \mathsf{H}$ defined via $\alpha(a_1,\ldots,a_n)=\multiset{a_1,\ldots,a_n}$ between these two representations is both option and source preserving. This map essentially comes from the permutation symmetries of the tuple representation. The corresponding gamegraphs $\mathsf{G}$ and $\mathsf{H}$ in the special case of two piles consisting of 3 and 2 stones each, are shown in Figure~\ref{fig:ex of morphism}. The map $\alpha$ is indicated in the figure using matching colors.
\end{example}

\begin{figure}[ht]
\centering
\begin{tikzpicture}[scale=1.2,auto]
\node (1) at (0,5) [rect vert,fill=purple!50] {\scriptsize $(3,2)$};
\node (2) at (-1,4) [rect vert,fill=orange!50] {\scriptsize $(2,2)$};
\node (3) at (1,4) [rect vert,fill=turq!50] {\scriptsize $(3,1)$};
\node (4) at (-2,3) [rect vert,fill=springgreen!50] {\scriptsize $(1,2)$};
\node (5) at (0,3) [rect vert,fill=springgreen!50] {\scriptsize $(2,1)$};
\node (6) at (2,3) [rect vert,fill=rose!50] {\scriptsize $(3,0)$};
\node (7) at (-2,2) [rect vert,fill=darkorchid!50] {\scriptsize $(1,1)$};
\node (8) at (0,2) [rect vert] {\scriptsize $(0,2)$};
\node (9) at (2,2) [rect vert] {\scriptsize $(2,0)$};
\node (10) at (-1,1) [rect vert,fill=mediterranean!50] {\scriptsize $(1,0)$};
\node (11) at (1,1) [rect vert,fill=mediterranean!50] {\scriptsize$(0,1)$};
\node (12) at (0,0) [rect vert,fill=rred!50] {\scriptsize $(0,0)$};
\node (G) at (0,-.5) {$\mathsf{G}$};
\path [d,out=196,in=80] (1) to (4);
\path [d] (1) to (2);
\path [d,out=-60,in=45] (1) to (8);
\path [d] (1) to (3);
\path [d,out=-15,in=110] (1) to (6);
\path [d] (2) to (4);
\path [d,out=-90] (2) to (8);
\path [d] (2) to (5);
\path [d,out=-20] (2) to (9);
\path [d,out=200,in=45] (3) to (7);
\path [d] (3) to (5);
\path [d] (3) to (6);
\path [d] (3) to (11);
\path [d,out=-45,in=90] (4) to (10);
\path [d] (4) to (7);
\path [d] (4) to (8);
\path [d] (5) to (7);
\path [d,out=-45,in=90] (5) to (11);
\path [d] (5) to (9);
\path [d,out=-130,in=30] (6) to (10);
\path [d] (6) to (9);
\path [d,out=-40,in=0] (6) to (12);
\path [d] (7) to (10);
\path [d,out=-20,in=165] (7) to (11);
\path [d] (8) to (12);
\path [d] (8) to (11);
\path [d,out=210,in=10] (9) to (10);
\path [d,out=-95,in=20] (9) to (12);
\path [d] (10) to (12);
\path [d] (11) to (12);
\end{tikzpicture}
$\hspace{-.5cm}$
\raisebox{3.8cm}{\hbox{$\overset\alpha\longrightarrow$}}
$\quad$
\begin{tikzpicture}[scale=1.2]
\node (a) at (6,5) [rect vert,fill=purple!50] {\scriptsize $\multiset{2,3}$};
\node (b) at (4.5,4) [rect vert,fill=orange!50] {\scriptsize $\multiset{2,2}$};
\node (c) at (7.5,4) [rect vert,fill=turq!50] {\scriptsize $\multiset{1,3}$};
\node (d) at (4.5,3) [rect vert,fill=springgreen!50] {\scriptsize $\multiset{1,2}$};
\node (e) at (7.5,3) [rect vert,fill=rose!50] {\scriptsize $\multiset{0,3}$};
\node (f) at (4.5,2) [rect vert,fill=darkorchid!50] {\scriptsize $\multiset{1,1}$};
\node (g) at (7.5,2) [rect vert] {\scriptsize $\multiset{0,2}$};
\node (h) at (6,1) [rect vert,fill=mediterranean!50] {\scriptsize $\multiset{0,1}$};
\node (i) at (6,0) [rect vert,fill=rred!50] {\scriptsize $\multiset{0,0}$};
\node (H) at (6,-.5) {$\mathsf{H}$};

\path [d,out=-100,in=55] (a) to (d);
\path [d] (a) to (b);
\path [d,out=-90] (a) to (g);
\path [d] (a) to (c);
\path [d,out=-80,in=125] (a) to (e);
\path [d] (b) to (d);
\path [d] (b) to (g);
\path [d] (c) to (d);
\path [d] (c) to (f);
\path [d,out=-125,in=90] (c) to (h);
\path [d] (c) to (e);
\path [d] (d) to (f);
\path [d,out=-45,in=120] (d) to (h);
\path [d] (d) to (g);
\path [d,out=-125,in=75] (e) to (h);
\path [d] (e) to (g);
\path [d,out=-30,in=15] (e) to (i);
\path [d] (f) to (h);
\path [d] (g) to (h);
\path [d] (g) to (i);
options h
\path [d] (h) to (i);
\end{tikzpicture}
\caption{
\label{fig:ex of morphism}
A source and option preserving map $\alpha:\mathsf{G}\to\mathsf{H}$ between finite gamegraphs.
}
\end{figure}

\begin{example}
There are option preserving maps that do not originate from the intuitive notion of symmetry of the positions. One such map is shown in Figure \ref{fig:Wythoff morphism}. This map is between Wythoff's game $\mathsf{G}$ with starting position $\multiset{1,2}$ and the subtraction game $\mathsf{H}$ with $3$ stones in which players are allowed to take at most two stones. Wythoff's game is played on two heaps, where players can remove arbitrarily many stones from one of the heaps or the same number of stones from both. The option and source preserving gamegraph map $\alpha:\mathsf{G}\to\mathsf{H}$ is defined by $\alpha(\multiset{a,b}):=a+b$. Note that this formula  is just a convenient, purely coincidental way to describe $\alpha$ for this particular example, it does not have any deeper meaning and we cannot expect it to work for any larger examples.
\end{example}

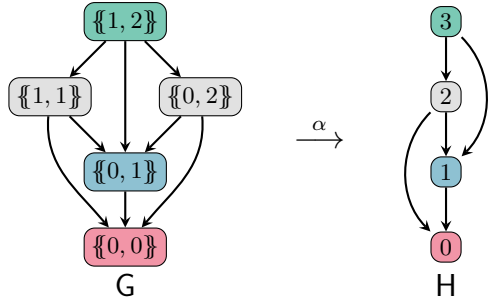
\begin{figure}[ht]
\centering
\begin{tikzpicture}[scale=1,auto]
\node (1) at (0,5) [rect vert,fill=springgreen!50] {\scriptsize $\multiset{1,2}$};
\node (2) at (-1,4) [rect vert] {\scriptsize $\multiset{1,1}$};
\node (3) at (1,4) [rect vert] {\scriptsize $\multiset{0,2}$};
\node (4) at (0,3) [rect vert,fill=mediterranean!50] {\scriptsize $\multiset{0,1}$};
\node (5) at (0,2) [rect vert,fill=rred!50] {\scriptsize $\multiset{0,0}$};
\node (G) at (0,1.5) {$\mathsf{G}$};
\path [d] (1) to (2);
\path [d] (1) to (3);
\path [d] (1) to (4);
\path [d] (2) to (4);
\path [d,out=-95, in=130] (2) to (5);
\path [d] (3) to (4);
\path [d,out=-85, in=50] (3) to (5);
\path [d] (4) to (5);
\end{tikzpicture}
$\quad$
\raisebox{2.1cm}{\hbox{$\overset\alpha\longrightarrow$}}
$\quad$
\begin{tikzpicture}[scale=1]


\node (a) at (6,5) [rect vert,fill=springgreen!50] {\scriptsize $3$};
\node (b) at (6,4) [rect vert] {\scriptsize $2$};
\node (c) at (6,3) [rect vert,fill=mediterranean!50] {\scriptsize $1$};
\node (d) at (6,2) [rect vert,fill=rred!50] {\scriptsize $0$};
\node (H) at (6,1.5) {$\mathsf{H}$};

\path [d] (a) to (b);
\path [d] (b) to (c);
\path [d] (c) to (d);
\path[d,out=-45,in=45] (a) to (c);
 \path[d,out=-135,in=135] (b) to (d);
\end{tikzpicture}

\caption{\label{fig:Wythoff morphism}
A source and option preserving gamegraph map $\alpha:\mathsf{G}\to\mathsf{H}$ that does not originate from a symmetry of the positions.}
\end{figure}

There are examples of games where a human naturally identifies some positions, but further identifications can be made. 

\begin{example}\label{ex:grundy}
In Grundy's game, a move consists of dividing a heap on the table into two non-equal heaps. Figure \ref{fig:Grundy morphism} shows an example starting with a single heap with $7$ stones. Clearly, heaps of size $1$ or $2$ cannot be further divided, so a human player would essentially ignore these heaps as soon as they appear. This results in a natural identification of some positions, which constitutes the option preserving map $\alpha$, shown in the figure. An additional nontrivial identification is possible as shown by the option preserving map $\beta$.

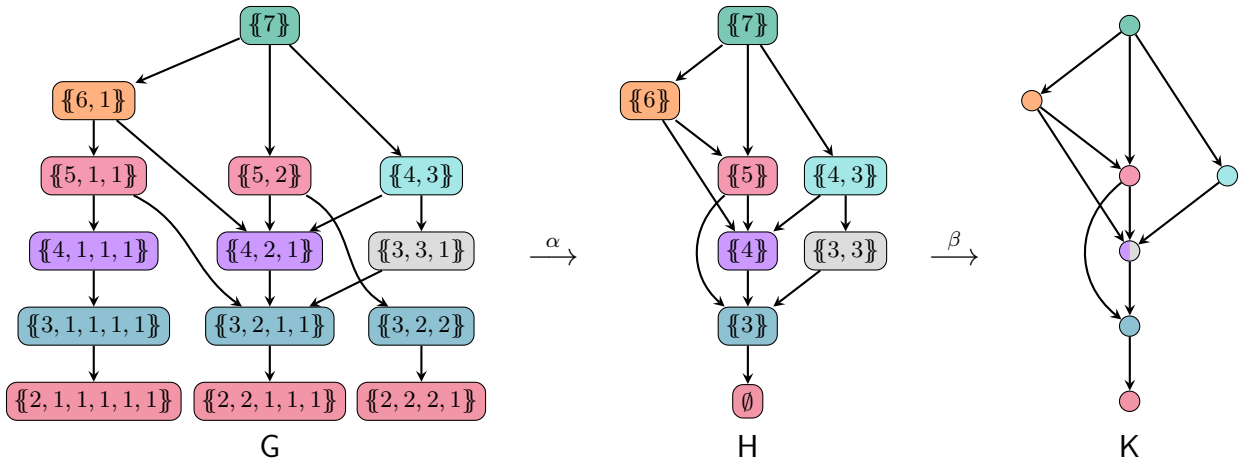
\begin{figure}[ht]
\centering
\begin{tikzpicture}[xscale=1.3,yscale=1]
\node (1) at (0,7) [rect vert,fill=springgreen!50] {\scriptsize $\multiset{7}$};
\node (2) at (-1.8,6) [rect vert,fill=orange!50] {\scriptsize $\multiset{6,1}$};
\node (3) at (-1.8,5) [rect vert,fill=rose!50] {\scriptsize $\multiset{5,1,1}$};
\node (4) at (0,5) [rect vert,fill=rose!50] {\scriptsize $\multiset{5,2}$};
\node (5) at (1.55,5) [rect vert,fill=turq!50] {\scriptsize $\multiset{4,3}$};
\node (6) at (-1.8,4) [rect vert,fill=purple!50] {\scriptsize $\multiset{4,1,1,1}$};
\node (7) at (0,4) [rect vert,fill=purple!50] {\scriptsize $\multiset{4,2,1}$};
\node (8) at (1.55,4) [rect vert,fill=gray] {\scriptsize $\multiset{3,3,1}$};
\node (9) at (-1.8,3) [rect vert,fill=mediterranean!50] {\scriptsize $\multiset{3,1,1,1,1}$};
\node (10) at (0,3) [rect vert,fill=mediterranean!50] {\scriptsize $\multiset{3,2,1,1}$};
\node (11) at (1.55,3) [rect vert,fill=mediterranean!50] {\scriptsize $\multiset{3,2,2}$};
\node (12) at (-1.8,2) [rect vert,fill=rred!50] {\scriptsize $\multiset{2,1,1,1,1,1}$};
\node (13) at (0,2) [rect vert,fill=rred!50] {\scriptsize $\multiset{2,2,1,1,1}$};
\node (14) at (1.55,2) [rect vert,fill=rred!50] {\scriptsize $\multiset{2,2,2,1}$};
\node (G) at (0,1.4) {$\mathsf{G}$};
\path [d] (1) to (2);
\path [d] (1) to (5);
\path [d] (1) to (4);
\path [d] (2) to (3);
\path [d] (2) to (7);
\path [d] (3) to (6);
\path [d,out=-30,in=140] (3) to (10);
\path [d] (4) to (7);
\path [d,out=-32.5,in=145] (4) to (11);
\path [d] (5) to (7);
\path [d] (5) to (8);
\path [d] (6) to (9);
\path [d] (7) to (10);
\path [d] (8) to (10);
\path [d] (9) to (12);
\path [d] (10) to (13);
\path [d] (11) to (14);
\end{tikzpicture}
\quad
\raisebox{2.7cm}{\hbox{$\overset\alpha\longrightarrow$}}
\quad
\begin{tikzpicture}[xscale=1.3,yscale=1]
\node (a) at (4.5,7) [rect vert,fill=springgreen!50] {\scriptsize $\multiset{7}$};
\node (b) at (3.5,6) [rect vert,fill=orange!50] {\scriptsize $\multiset{6}$};
\node (c) at (4.5,5) [rect vert,fill=rose!50] {\scriptsize $\multiset{5}$};
\node (d) at (5.5,5) [rect vert,fill=turq!50] {\scriptsize $\multiset{4,3}$};
\node (e) at (4.5,4) [rect vert,fill=purple!50] {\scriptsize $\multiset{4}$};
\node (f) at (5.5,4) [rect vert,fill=gray] {\scriptsize $\multiset{3,3}$};
\node (g) at (4.5,3) [rect vert,fill=mediterranean!50] {\scriptsize $\multiset{3}$};
\node (h) at (4.5,2) [rect vert,fill=rred!50] {\scriptsize $\emptyset$};
\node (H) at (4.5,1.4) {$\mathsf{H}$};
\path [d] (a) to (b);
\path [d] (a) to (c);
\path [d] (a) to (d);
\path [d] (b) to (c);
\path [d] (b) to (e);
\path [d] (c) to (e);
\path[d,out=-135,in=135] (c) to (g);
\path [d] (d) to (e);
\path [d] (d) to (f);
\path [d] (e) to (g);
\path [d] (f) to (g);
\path [d] (g) to (h);
\end{tikzpicture}
\quad
\raisebox{2.7cm}{\hbox{$\overset\beta\longrightarrow$}}
\quad
\begin{tikzpicture}[xscale=1.3,yscale=1]
\node (a1) at (8.2,7) [small vert, fill=springgreen!50] {};
\node (b1) at (7.2,6) [small vert, fill=orange!50] {};
\node (c1) at (8.2,5) [small vert, fill=rose!50] {};
\node (d1) at (9.2,5) [small vert, fill=turq!50] {};
\node (ee1) at (8.1575,4) [semicircle, rotate=90, fill=purple!50, scale=0.45]{};
\node (ef1) at (8.25,4) [semicircle, rotate=-90, fill=gray, scale=0.45]{};
\node (e1) at (8.2,4) [small vert, fill=none]{};
\node (f1) at (8.2,3) [small vert, fill=mediterranean!50] {};
\node (g1) at (8.2,2) [small vert, fill=rred!50] {};
\node (K) at (8.2,1.4) {$\mathsf{K}$};


\path[d,out=-135,in=135] (c1) to (f1);
\path [d] (a1) to (b1);
\path [d] (a1) to (c1);
\path [d] (a1) to (d1);
\path [d] (b1) to (c1);
\path [d] (b1) to (e1);
\path [d] (c1) to (e1);
\path [d] (d1) to (e1);
\path [d] (e1) to (f1);
\path [d] (f1) to (g1);
\end{tikzpicture}

\caption{
\label{fig:Grundy morphism}
An intermediary, ``natural" option preserving map $\alpha:\mathsf{G}\to\mathsf{H}$,  and option preserving map $\beta:\mathsf{H}\to\mathsf{K}$ that further reduces the gamegraph.}
\end{figure}
\end{example}

\begin{example}\label{ex:47}
\label{example-opm1}
A \emph{geodesic} of a finite graph is a shortest path between two vertices. The \emph{geodetic closure} of a set $S$ of vertices is the set of vertices contained on geodesics between two vertices of $S$. 
In a geodetic achievement game \cite{BeneshGeodetic,BuckleyGeodetic,HaynesGeodetic} on a graph $G$, the players select previously unselected vertices until the geodetic closure of the set $P$ of jointly-selected elements is the full vertex set of $G$. 

Consider the grid graph $G=P_m\boxprod P_n$. It is clear that the geodetic achievement game on $G$ ends as soon as $P$ contains a vertex from each of the four edges of the grid. So to determine the end of the game we only need to know how many vertices are chosen in nine regions of the vertex set. These regions are the four corners, the four edges without corners, and the middle. We map a position to a $3\times 3$ matrix that shows how many unchosen vertices are left in these nine regions. This map $\alpha$ is option preserving between the geodetic achievement game and a matrix game in which players decrease an entry of the $3\times 3$ matrix by $1$. Figure~\ref{fig:geodetic} shows some positions with their images. This matrix game is easier to analyze than the original geodetic achievement game because the option preserving map ignores some  irrelevant information. The nim-numbers of this game were determined in \cite{BeneshGeodetic} using different and somewhat advanced techniques. 
\end{example}

\begin{figure}[ht]
\begin{tikzpicture}[scale=.3,baseline=6pt]
\node at (0,0) {$\circ$};
\node at (0,1) {$\circ$};
\node at (0,2) {$\circ$};
\node at (1,0) {$\circ$};
\node at (1,1) {$\circ$};
\node at (1,2) {$\circ$};
\node at (2,0) {$\circ$};
\node at (2,1) {$\circ$};
\node at (2,2) {$\circ$};
\node at (3,0) {$\circ$};
\node at (3,1) {$\circ$};
\node at (3,2) {$\circ$};
\end{tikzpicture}
$\overset\alpha\longmapsto$
$\left[
\begin{smallmatrix}
1 & 2 & 1 \\
1 & 2 & 1 \\
1 & 2 & 1
\end{smallmatrix}
\right]$
\hfil$\cdots$\hfil
\begin{tikzpicture}[scale=.3,baseline=6pt]
\node at (0,0) {$\circ$};
\node at (0,1) {$\bullet$};
\node at (0,2) {$\circ$};
\node at (1,0) {$\circ$};
\node at (1,1) {$\bullet$};
\node at (1,2) {$\circ$};
\node at (2,0) {$\circ$};
\node at (2,1) {$\circ$};
\node at (2,2) {$\bullet$};
\node at (3,0) {$\circ$};
\node at (3,1) {$\circ$};
\node at (3,2) {$\circ$};
\end{tikzpicture}
$\overset\alpha\longmapsto$
$\left[
\begin{smallmatrix}
1 & 1 & 1 \\
0 & 1 & 1 \\
1 & 2 & 1
\end{smallmatrix}
\right]$
\hfil$\cdots$\hfil
\begin{tikzpicture}[scale=.3,baseline=6pt]
\node at (0,0) {$\circ$};
\node at (0,1) {$\bullet$};
\node at (0,2) {$\circ$};
\node at (1,0) {$\bullet$};
\node at (1,1) {$\bullet$};
\node at (1,2) {$\circ$};
\node at (2,0) {$\bullet$};
\node at (2,1) {$\circ$};
\node at (2,2) {$\bullet$};
\node at (3,0) {$\bullet$};
\node at (3,1) {$\circ$};
\node at (3,2) {$\circ$};
\end{tikzpicture}
$\overset\alpha\longmapsto$
$\left[
\begin{smallmatrix}
1 & 1 & 1 \\
0 & 1 & 1 \\
1 & 0 & 0
\end{smallmatrix}
\right]$

\caption{
\label{fig:geodetic}
Start, middle, and end stages of the geodetic achievement game on $P_3\boxprod P_4$. Both the original positions and their images through $\alpha$ are shown. Filled circles indicate the chosen vertices. Edges of the graph are not shown.}
\end{figure}

\begin{remark}
If $\alpha:\mathsf{S}\to\mathsf{T}$ and $\beta:\mathsf{R}\to\mathsf{S}$ are option preserving rulegraph maps, then
\[
\alpha(\beta(\Opt(p)))=\alpha(\Opt(\beta(p)))=\Opt(\alpha(\beta(p))).
\]
for all $p$. So the composition $\alpha\circ\beta$ is also option preserving. Similarly, the composition of source preserving maps is also source preserving.
\end{remark}

\begin{proposition}
\label{prop:sourceIffSurj}
Let $\alpha: \mathsf{G}\to \mathsf{H}$ be an option preserving gamegraph map. Then $\alpha$ is source preserving if and only if $\alpha$ is surjective.
\end{proposition}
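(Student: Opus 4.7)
The plan is to prove both directions separately, using the definition of a gamegraph (every position is a subposition of the unique source) and the key observation that option preserving maps are in particular digraph homomorphisms.

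For the forward direction, assume $\alpha$ is source preserving, so $\alpha(s_\mathsf{G}) = s_\mathsf{H}$, where $s_\mathsf{G}$ and $s_\mathsf{H}$ denote the respective starting positions. To show $\alpha$ is surjective, let $h \in V(\mathsf{H})$. Since $\mathsf{H}$ is a gamegraph, there is a directed walk $s_\mathsf{H} = h_0, h_1, \ldots, h_n = h$. I will prove by induction on $k$ that each $h_k$ lies in the image of $\alpha$. The base case $k = 0$ is immediate since $h_0 = s_\mathsf{H} = \alpha(s_\mathsf{G})$. For the inductive step, suppose $h_k = \alpha(g_k)$ for some $g_k \in V(\mathsf{G})$. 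Then $h_{k+1} \in \Opt(h_k) = \Opt(\alpha(g_k)) = \alpha(\Opt(g_k))$, so $h_{k+1} = \alpha(g_{k+1})$ for some $g_{k+1} \in \Opt(g_k)$. In particular $h = h_n$ is in the image of $\alpha$.

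For the backward direction, assume $\alpha$ is surjective. I first observe that any option preserving map sends arrows to arrows: if $(u,v) \in E(\mathsf{G})$, then $v \in \Opt(u)$, so $\alpha(v) \in \alpha(\Opt(u)) = \Opt(\alpha(u))$, giving $(\alpha(u), \alpha(v)) \in E(\mathsf{H})$. By surjectivity, choose $g \in V(\mathsf{G})$ with $\alpha(g) = s_\mathsf{H}$. I claim $g$ must be a source in $\mathsf{G}$: if there were some $g'' \in V(\mathsf{G})$ with $(g'',g) \in E(\mathsf{G})$, then $(\alpha(g''), \alpha(g)) = (\alpha(g''), s_\mathsf{H}) \in E(\mathsf{H})$, contradicting the fact that $s_\mathsf{H}$ is a source of $\mathsf{H}$. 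Since $\mathsf{G}$ has a unique source, $g = s_\mathsf{G}$, and therefore $\alpha(s_\mathsf{G}) = s_\mathsf{H}$.

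Neither direction presents a serious obstacle. The only subtlety is recognizing that one should lift walks in $\mathsf{H}$ back to $\mathsf{G}$ in the forward direction (rather than pushing walks from $\mathsf{G}$ forward, which would only reach the image), and that option preserving plus the uniqueness of the source is exactly what is needed in the backward direction.
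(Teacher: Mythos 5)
Your proof is correct and follows essentially the same route as the paper's: the forward direction shows the image is closed under taking options and propagates from the starting position (you make the induction along a walk explicit where the paper leaves it implicit), and the backward direction derives the same contradiction from the source of $\mathsf{H}$ acquiring an in-neighbor. No gaps.
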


\begin{proof}
First suppose that $\alpha$ is source preserving. If position $r$ is in the range of $\alpha$ and $s\in\Opt(r)$, then $s$ is also in the range of $\alpha$ since $r=\alpha(p)$ for some $p$ and so $s\in\Opt(\alpha(p))=\alpha(\Opt(p))$. This implies that every subposition of $r$ is also in the range of $\alpha$. Since the starting position of $\mathsf{H}$ is in the range of $\alpha$, every position of $\mathsf{H}$ is in the range of $\alpha$.

Now suppose that $\alpha$ is surjective. Then the starting position $r_0$ of $\mathsf{H}$ is in the range of $\alpha$. That is, $r_0=\alpha(q)$ for some $q$. For a contradiction suppose that $q$ is not the starting position of $\mathsf{G}$ and so $q\in\Opt(p)$ for some $p$. Then $r_0=\alpha(q)\in\alpha(\Opt(p))=\Opt(\alpha(p))$, which is a contradiction.
\end{proof}

\begin{example}\label{ex:option preserving not surjective}
The previous proposition does not generalize to rulegraphs. The rulegraph maps shown in Figure~\ref{fig:InfiniteCounterexamples} are option preserving and take sources to sources injectively but they are not surjective. The first fails to be surjective because not every source of the codomain is in the range. The image of the second one contains every source of the codomain but not every position of the codomain is a subposition of some source.
\end{example}

\begin{figure}[ht]
\centering
 \begin{tikzpicture}[scale=.8]
 \node (t) at (9.7,1) {$\dots$};
 \node (2) [small vert,fill=purple!50,star,star points=5] at (8,1) {};
 \node (3) [small vert,fill=rose!50,star,star points=6] at (7,1) {};
 \node (4) [small vert,fill=turq!50,star,star points=4] at (9,1) {};
 \node (5) [small vert,fill=springgreen!50,star,star points=3] at (8,0) {};
 \path [d] (3) to (5);
 \path [d] (4) to (5);
 \path [d] (2) to (5);
\end{tikzpicture}
\hspace{0cm}
\raisebox{.8cm}{\hbox{$\longrightarrow$}}
$\quad$
\begin{tikzpicture}[scale=.8]
 \node (t) at (16.7,1) {$\dots$};
 \node (e) [small vert,fill=purple!50,star,star points=5] at (15,1) {};
 \node (f) [small vert,fill=rose!50,star,star points=6] at (14,1) {};
 \node (g) [small vert,fill=turq!50,star,star points=4] at (16,1) {};
 \node (h) [small vert,fill=springgreen!50,star,star points=3] at (14,0) {}; 
 \node (i) [small vert] at (13,1) {};
 \path [d] (e) to (h);
 \path [d] (i) to (h);
 \path [d] (f) to (h);
 \path [d] (g) to (h);
 \end{tikzpicture}
 \qquad\qquad
 \begin{tikzpicture}[scale=.8]
 \node (1) [small vert,fill=orange!50,star,star points=4] at (-0.25,1) {};
 \node (0) [small vert,fill=mediterranean!50,star,star points=3] at (-0.25,0) {};
 \path [d] (1) to (0);
 \end{tikzpicture}
\quad
\raisebox{.8cm}{\hbox{$\longrightarrow$}}
$\quad$
\begin{tikzpicture}[scale=.8]
 \node (a) at (4,2.75) {$\vdots$};
 \node (b) [small vert,fill=orange!50,star,star points=4] at (3,1) {}; 
 \node (c) [small vert] at (4,1) {};
 \node (e) [small vert] at (4,2) {};
 \node (d) [small vert,fill=mediterranean!50,star,star points=3] at (3.5,0) {};
 \path [d] (e) to (c);
 \path [d] (b) to (d);
 \path [d] (c) to (d);
\end{tikzpicture}

\caption{
\label{fig:InfiniteCounterexamples}
Option preserving maps on rulegraphs that take sources to sources but neither is surjective.}
\end{figure}
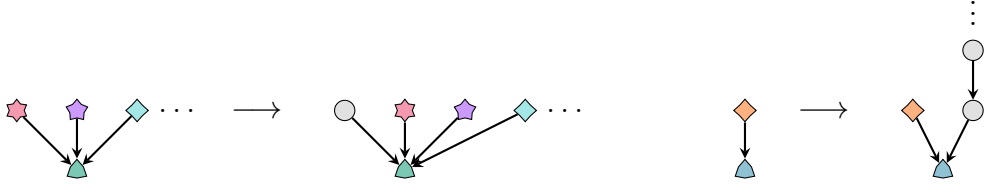

\begin{definition}
A map $f:V(\mathsf{R})\to Y$ on the positions of a rulegraph $\mathsf{R}$ is called a \emph{valuation} if there is a map $\mu:2^Y\to Y$ such that $f(p)=\mu(f(\Opt(p)))$ for each position $p$.
\end{definition}

Note that fixing a $\mu$ determines a corresponding $f$ that is defined for any rulegraph. So we are going to denote this valuation by $f$ even if we are working with several rulegraphs.

If $f$ is a valuation on a gamegraph $\mathsf{G}$ with starting position $s$, then we write $f(\mathsf{G}):=f(s)$.

\begin{example}
The nim-number of positions is a valuation with $\mu=\mex$.
\end{example}

\begin{example}
The normal play outcome function $o^+:V(\mathsf{R})\to\{P,N\}$ is a valuation with
\[
\mu^+(A)=\begin{cases}
N, & P\in A; \\
P, & \text{otherwise}.\\
\end{cases}
\]
\end{example}

\begin{example}
The mis\`{e}re play outcome function $o^-:V(\mathsf{R})\to\{P,N\}$ is a valuation with
\[
\mu^-(A)=\begin{cases}
N, & P\in A \text{ or } A=\emptyset;\\
P, & \text{otherwise}.\\
\end{cases}
\]
\end{example}

\begin{example}
The formal birthday $\fbd(p)$ of a position $p$ is a valuation with 
\[
\mu(A)=\sup\{x+1 \mid x\in A \}.
\]
\end{example}

\begin{example}
The minimum distance of a position from a terminal position is a valuation with 
\[
\mu(A)=\begin{cases}
0, & A=\emptyset;\\
\min\{x+1 \mid x\in A \}, & \text{otherwise}.\\
\end{cases}
\]
\end{example}

The following result is the main reason for our interest in option preserving maps.

\begin{proposition}\label{prop:nim preserving}
If $\alpha: \mathsf{R}\to\mathsf{S}$ is an option preserving rulegraph map and $f$ is a valuation, then $f(\alpha(p))=f(p)$ for each position $p$ in $\mathsf{R}$.
\end{proposition}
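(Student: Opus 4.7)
The plan is to use well-founded induction on the positions of $\mathsf{R}$. The key observation is that because $\mathsf{R}$ is a rulegraph, it has no infinite directed walks, so the relation ``$q\in\Opt(p)$'' is well-founded. This gives us an induction principle: to show a property holds for every position $p$, it suffices to show it holds for $p$ whenever it holds for every option of $p$.

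First I would handle what is essentially the base case. If $p$ is terminal, then $\Opt(p)=\emptyset$, and since $\alpha$ is option preserving, $\Opt(\alpha(p))=\alpha(\Opt(p))=\emptyset$, so $\alpha(p)$ is also terminal. Hence
\[
f(\alpha(p))=\mu(f(\Opt(\alpha(p))))=\mu(\emptyset)=\mu(f(\Opt(p)))=f(p).
\]

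For the inductive step, assume that $f(\alpha(q))=f(q)$ for every $q\in\Opt(p)$. Using the option preserving property of $\alpha$, we compute
\[
f(\alpha(p))=\mu(f(\Opt(\alpha(p))))=\mu(f(\alpha(\Opt(p))))=\mu(\{f(\alpha(q))\mid q\in\Opt(p)\}).
\]
Applying the inductive hypothesis termwise inside the set, this equals $\mu(\{f(q)\mid q\in\Opt(p)\})=\mu(f(\Opt(p)))=f(p)$, as desired. Note that the base case is not truly separate, since with $\Opt(p)=\emptyset$ the argument above specializes correctly; I would likely just present a single unified induction.

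The only subtle point is justifying the well-founded induction itself. The rulegraph axiom forbids infinite directed walks, and in particular forbids infinite sequences $p=p_0,p_1,p_2,\ldots$ with $p_{i+1}\in\Opt(p_i)$, which is exactly the statement that the option relation is well-founded. This guarantees that the recursive definition of $f$ yields a well-defined function and simultaneously licenses the inductive argument above. No other obstacles arise, as the chain of equalities only uses the definition of valuation and the defining equation $\Opt(\alpha(p))=\alpha(\Opt(p))$.
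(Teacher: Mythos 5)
Your proof is correct and matches the paper's argument: the paper also uses transfinite (well-founded) structural induction on positions and the same chain of equalities $f(\alpha(p))=\mu(f(\Opt(\alpha(p))))=\mu(f(\alpha(\Opt(p))))=\mu(f(\Opt(p)))=f(p)$. Your additional remarks about the base case folding into the unified induction and the well-foundedness of the option relation are sound but not needed beyond what the paper states.
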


\begin{proof}
Using transfinite structural induction on the positions, we have
\[
\begin{aligned}
f(\alpha(p))
 &=\mu(f(\Opt(\alpha(p)))) \\
 &=\mu(f(\alpha(\Opt(p)))) & & \text{($\alpha$ is option preserving)} \\
 &=\mu(f(\Opt(p))) & & \text{(by induction)} \\
 &=f(p).
\end{aligned}
\]
\end{proof}

\begin{corollary}
If $\alpha:\mathsf{G}\to\mathsf{H}$ is a source and option preserving gamegraph map, then $\nim(\mathsf{G})=\nim(\mathsf{H})$, $o^+(\mathsf{G})=o^+(\mathsf{H})$,  $o^-(\mathsf{G})=o^-(\mathsf{H})$, and $\fbd(\mathsf{G})=\fbd(\mathsf{H})$.
\end{corollary}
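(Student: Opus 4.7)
The plan is to package Proposition~\ref{prop:nim preserving} together with the source preserving hypothesis, applied to the four specific valuations that are the subject of the corollary. Essentially all of the real content has already been proved; the corollary is a convenient restatement in the gamegraph setting.

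First, I would invoke the examples immediately preceding Proposition~\ref{prop:nim preserving} to observe that each of $\nim$, $o^+$, $o^-$, and $\fbd$ is a valuation, since they were realized by explicit maps $\mu$ (namely $\mex$, $\mu^+$, $\mu^-$, and $A\mapsto\sup\{x+1\mid x\in A\}$, respectively). So Proposition~\ref{prop:nim preserving} applies uniformly to all four: for any valuation $f$ in this list and any position $p$ of $\mathsf{G}$, we have $f(\alpha(p))=f(p)$.

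Next, let $s$ denote the starting position of $\mathsf{G}$. Because $\alpha$ is source preserving, $\alpha(s)$ is the starting position of $\mathsf{H}$. By the conventions $\nim(\mathsf{G}):=\nim(s)$, $o^\pm(\mathsf{G}):=o^\pm(s)$, and $\fbd(\mathsf{G}):=\fbd(s)$ (and analogously for $\mathsf{H}$), applying the identity $f(s)=f(\alpha(s))$ with $f\in\{\nim,o^+,o^-,\fbd\}$ immediately yields
\[
f(\mathsf{G})=f(s)=f(\alpha(s))=f(\mathsf{H}),
\]
which is the claim in each of the four cases.

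There is no real obstacle here; the substance of the argument lives in Proposition~\ref{prop:nim preserving} and in the verification (in the preceding examples) that the four relevant functions are valuations. The only thing to be careful about is the use of the source preserving hypothesis, which is exactly what lets us pass from the pointwise statement $f(\alpha(p))=f(p)$ to a statement about the invariants of the whole gamegraph.
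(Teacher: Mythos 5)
Your proposal is correct and matches the paper's (implicit) argument exactly: the corollary is stated without proof as an immediate consequence of Proposition~\ref{prop:nim preserving}, applied to the four valuations identified in the preceding examples, with source preservation used to pass from $f(\alpha(s))=f(s)$ at the starting position to the gamegraph-level identities. Nothing is missing.
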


\begin{example}
\label{ex:mouse}
Consider a rectangular maze in which a mouse starts from the lower left corner and can move up or right as shown in Figure~\ref{fig:maze}. Two players take turns moving the mouse in the maze. The game ends as soon as the mouse reaches the top or the right edge of the maze. A piece of cheese is placed at the top edge while a cat is waiting for the mouse on the right edge. So the top edge contains $N$-positions while the right edge contains $P$-positions. Figure~\ref{fig:maze} shows the outcome of each position in a $3\times 4$ maze. The outcome at a nonterminal position $p$ is determined by the usual $o(p):=\mu^+(o(\Opt(p)))$. 

This outcome function $o$ is not a valuation since $\Opt(p)=\emptyset$ at each terminal position $p$ but $o(p)$ can be either $N$ or $P$, so there is no compatible way to define $\mu(\emptyset)$. As a consequence, we cannot expect that option preserving maps preserve the outcome of the positions. One might want to develop a theory of a category whose objects are rulegraphs with an outcome defined on their terminal positions and whose morphisms are option preserving maps that also preserve the outcomes at terminal positions. This mixed normal and mis\`{e}re play cannot be studied using the traditional set of options model of games.  
\end{example}

\begin{figure}[ht]
\begin{tikzpicture}[scale=.9]
\draw (-.5,-.5) -- (-.5,2.5) -- (3.5,2.5) -- (3.5,-0.5) -- cycle;
\node (00)  at (0,0) {$P$};
\node (10)  at (1,0) {$N$};
\node (20)  at (2,0) {$N$};
\node (30) at (3,0) {$P$};
\node (01)  at (0,1) {$N$};
\node (11)  at (1,1) {$P$};
\node (21)  at (2,1) {$N$};
\node (31) at (3,1) {$P$};
\node (02)  at (0,2) {$N$};
\node (12)  at (1,2) {$N$};
\node (22)  at (2,2) {$N$};
\node at (4,.5) {cat};
\node at (1,3) {cheese};
\path [d] (00) to (10);
\path [d] (10) to (20);
\path [d] (20) to (30);
\path [d] (01) to (11);
\path [d] (11) to (21);
\path [d] (21) to (31);
\path [d] (00) to (01);
\path [d] (01) to (02);
\path [d] (10) to (11);
\path [d] (11) to (12);
\path [d] (20) to (21);
\path [d] (21) to (22);
\end{tikzpicture}

\caption{
\label{fig:maze}
The outcome of the positions in the mouse in the maze game.
}
\end{figure}
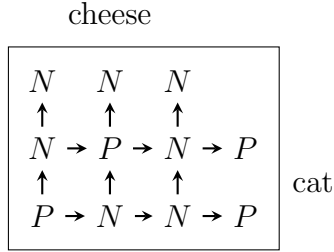

Roughly speaking, the next result states that an option preserving rulegraph map $\alpha:\mathsf{R}\to \mathsf{S}$ takes arrows to arrows and every arrow connecting positions in $\alpha(V(\mathsf{R}))$ is the image of an arrow. In other words, every option preserving rulegraph map is a faithful digraph homomorphism, where the notion of faithful is the digraph version of the definition in~\cite{HahnTardiff}.

\begin{proposition}
\label{prop:option preserving implies arrows preserved}
If $\alpha: C\to D$ is an option preserving digraph map, then $q\in\Opt(p)$ in $C$ implies $\alpha(q)\in\Opt(\alpha(p))$ in $D$. Moreover, if $\alpha(q)\in\Opt(\alpha(p))$, then $\alpha(q)=\alpha(r)$ for some $r$ satisfying $r\in\Opt(p)$.
\end{proposition}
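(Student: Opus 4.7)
The plan is to unpack the defining equation of an option preserving map, namely $\Opt(\alpha(p)) = \alpha(\Opt(p))$, viewing it as the conjunction of two set inclusions and then extracting the two claimed statements from these inclusions in turn.

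First I would prove the forward statement. Assume $q \in \Opt(p)$ in $C$. Applying $\alpha$ elementwise, $\alpha(q) \in \alpha(\Opt(p))$. Since $\alpha$ is option preserving, $\alpha(\Opt(p)) = \Opt(\alpha(p))$, so $\alpha(q) \in \Opt(\alpha(p))$ in $D$, as required. This direction uses only the inclusion $\alpha(\Opt(p)) \subseteq \Opt(\alpha(p))$.

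For the second statement, I would reverse the roles of the two inclusions. Assume $\alpha(q) \in \Opt(\alpha(p))$. By the option preserving hypothesis, $\Opt(\alpha(p)) = \alpha(\Opt(p))$, so $\alpha(q) \in \alpha(\Opt(p))$. By the definition of the direct image, this means there exists some $r \in \Opt(p)$ with $\alpha(r) = \alpha(q)$, which is exactly what is claimed. This direction uses only the inclusion $\Opt(\alpha(p)) \subseteq \alpha(\Opt(p))$.

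There is no real obstacle here: the proposition is essentially a restatement of the definition of \emph{option preserving} broken into its two set-theoretic halves, one for each inclusion in the equality $\alpha(\Opt(p)) = \Opt(\alpha(p))$. The only conceptual content is the observation that the first part identifies $\alpha$ as a digraph homomorphism, while the second part captures the extra \emph{faithfulness} condition, ensuring that no spurious arrows appear in the image that were not already present upstairs in some preimage.
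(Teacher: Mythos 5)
Your proof is correct and is essentially identical to the paper's: both directions are obtained by reading off the two inclusions in the defining equality $\alpha(\Opt(p))=\Opt(\alpha(p))$, exactly as the paper does.
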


\begin{proof}
Suppose $q\in\Opt(p)$ in $C$. Since $\alpha$ is option preserving, $\alpha(q)\in\alpha(\Opt(p))=\Opt(\alpha(p))$, so $\alpha(q)\in\Opt(\alpha(p))$ in $D$.

Now, suppose $\alpha(q)\in\Opt(\alpha(p))$. Hence $\alpha(q)\in\alpha(\Opt(p))$, and so there exists $r\in\Opt(p)$ such that $\alpha(q)=\alpha(r)$.
\end{proof}

The converse of the previous result is not true.

\begin{example}
Figure~\ref{fig:notOptPreserving} shows a rulegraph map $\alpha:\mathsf{R}\to\mathsf{S}$ that is not option preserving but satisfies both conclusions of Proposition~\ref{prop:option preserving implies arrows preserved}. Note that $\alpha$ is a faithful digraph homomorphism but it does not preserve outcomes.
\end{example}

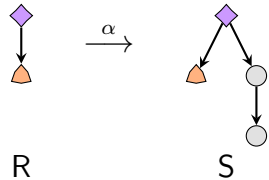
\begin{figure}[ht]
\begin{tikzpicture}[scale=.8]
\node (02) [small vert,fill=purple!50,star,star points=4] at (0,2) {};
\node (01) [small vert,fill=orange!50,star,star points=3] at (0,1) {};
\path [d] (02) to (01);
\node (R) at (0,-.5) {$\mathsf{R}$};
\end{tikzpicture}
\quad
\raisebox{1.8cm}{\hbox{$\overset\alpha\longrightarrow$}}
$\quad$
\begin{tikzpicture}[scale=.8]
\node (a) [small vert,fill=purple!50,star,star points=4] at (3,2) {};
\node (b) [small vert,fill=orange!50,star,star points=3] at (2.5,1) {};
\node (c) [small vert] at (3.5,1) {};
\node (d) [small vert] at (3.5,0) {};
\path [d] (a) to (b);
\path [d] (a) to (c);
\path [d] (c) to (d);
\node (S) at (3,-.5) {$\mathsf{S}$};
\end{tikzpicture}

\caption{
\label{fig:notOptPreserving}
A rulegraph map that is not option preserving.}
\end{figure}

The next result shows that if one is using an option preserving map to analyze a rulegraph, then the image is automatically a rulegraph.

\begin{proposition}
\label{prop:image ruleset}
If $\alpha:\mathsf{R}\to D$ is an option preserving map from a rulegraph to a digraph, then the digraph induced by $\alpha(V(\mathsf{R}))$ in $D$ is a rulegraph.   Moreover if $\mathsf{R}$ is a gamegraph, then the digraph induced by $\alpha(V(\mathsf{R}))$ is also a gamegraph. 
\end{proposition}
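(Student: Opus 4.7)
My plan is to prove each of the two assertions by lifting problematic structures in the induced subdigraph $D'$ on $\alpha(V(\mathsf{R}))$ back to $\mathsf{R}$ using the second half of Proposition~\ref{prop:option preserving implies arrows preserved}, and then deriving a contradiction with the hypothesis that $\mathsf{R}$ is a rulegraph (respectively, a gamegraph).

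For the rulegraph claim, I would argue by contradiction: suppose $r_0,r_1,r_2,\dots$ is an infinite directed walk in $D'$. Pick $p_0 \in V(\mathsf{R})$ with $\alpha(p_0)=r_0$. Since $D'$ is induced, each arrow $(r_i,r_{i+1})$ is an arrow of $D$, so $r_{i+1} \in \Opt(r_i)$. Given $p_i$ with $\alpha(p_i)=r_i$, the relation $\alpha(p_{i+1}') = r_{i+1} \in \Opt(\alpha(p_i)) = \alpha(\Opt(p_i))$ together with Proposition~\ref{prop:option preserving implies arrows preserved} lets me pick $p_{i+1} \in \Opt(p_i)$ with $\alpha(p_{i+1}) = r_{i+1}$. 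Inductively I obtain an infinite directed walk $p_0,p_1,\dots$ in $\mathsf{R}$, contradicting the assumption that $\mathsf{R}$ has no infinite directed walk. Hence $D'$ has no infinite directed walk, i.e.\ $D'$ is a rulegraph.

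Now assume $\mathsf{R}$ is a gamegraph with starting position $s$. I want to show $\alpha(s)$ is the starting position of $D'$, which needs (i) every vertex of $D'$ is a subposition of $\alpha(s)$ in $D'$, and (ii) $\alpha(s)$ is the unique source of $D'$. For (i), any vertex of $D'$ has the form $\alpha(p)$ for some $p \in V(\mathsf{R})$, and by the gamegraph hypothesis there is a directed walk $s = q_0,q_1,\dots,q_n = p$ in $\mathsf{R}$. The first half of Proposition~\ref{prop:option preserving implies arrows preserved} then gives that $\alpha(s)=\alpha(q_0),\alpha(q_1),\dots,\alpha(q_n)=\alpha(p)$ is a directed walk in $D$; each of its vertices is in $\alpha(V(\mathsf{R}))$, so by the induced-subdigraph definition this is a walk in $D'$.

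For (ii), suppose $v$ is any source of $D'$. By (i), there is a directed walk in $D'$ from $\alpha(s)$ to $v$; if this walk has length at least one then its penultimate vertex is an in-neighbor of $v$ in $D'$, contradicting that $v$ is a source. Hence $v=\alpha(s)$, which also shows that $\alpha(s)$ has no in-neighbor in $D'$ (any such in-neighbor would itself not be a source, but repeating the cycle argument: combining the putative in-neighbor with the walk from $\alpha(s)$ to it from (i) produces an infinite walk in $D'$, impossible since $D'$ is a rulegraph by the first part). Thus $\alpha(s)$ is the unique source of $D'$ and every vertex is reachable from it, making $D'$ a gamegraph. The only real obstacle is keeping track of ``in $D$'' versus ``in $D'$'' distinctions, which is handled cleanly because the induced subdigraph keeps every arrow of $D$ whose endpoints lie in $\alpha(V(\mathsf{R}))$.
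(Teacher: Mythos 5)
Your proof is correct and follows essentially the same route as the paper's: the rulegraph claim is obtained by lifting a hypothetical infinite walk back to $\mathsf{R}$ via the second half of Proposition~\ref{prop:option preserving implies arrows preserved}, and the gamegraph claim by pushing forward a walk from $s$ to $p$ via the first half. The only difference is that you explicitly verify that $\alpha(s)$ is the unique source of the induced subdigraph, a detail the paper leaves implicit; your verification of it is sound.
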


\begin{proof}
For a contradiction suppose that $\alpha(p_1),\alpha(p_2),\alpha(p_3),\ldots$ is an infinite walk in $D$. Using the second part of Proposition~\ref{prop:option preserving implies arrows preserved}, we can recursively find a walk $p_1',p_2',p_3',\ldots$ such that $\alpha(p_i')=\alpha(p_i)$ for all $i\ge 2$. This is a contradiction since $\mathsf{R}$ does not have any infinite walk.

Furthermore, if $\mathsf{R}$ is a gamegraph, then every position $\alpha(p)$ in $\alpha(V(\mathsf{R}))$ is a subposition of the image $\alpha(s)$ of the source $s$ of $\mathsf{R}$ since the image of a path from $s$ to $p$ is a path from $\alpha(s)$ to $\alpha(p)$.
\end{proof}

In light of Proposition~\ref{prop:image ruleset}, we define the \emph{image rulegraph} $\alpha(\mathsf{R})$ to be the rulegraph induced by $\alpha(V(\mathsf{R}))$ in $D$.

\section{Categories}

The class of rulegraphs with option preserving maps forms a concrete category $\mathbf{RGph}$. Proposition~\ref{prop:option preserving implies arrows preserved} implies that $\mathbf{RGph}$ is a subcategory of $\mathbf{Gph}$. Note that this subcategory is neither wide nor full. The category $\mathbf{GGph}$ of gamegraphs and option preserving maps is a full but not wide subcategory of $\mathbf{RGph}$. The category $\mathbf{GGph_s}$ of gamegraphs and option and source preserving maps is a wide subcategory of $\mathbf{GGph}$, but is not a full subcategory.

Outcomes form a category $\mathbf{Out}$ with objects $P$ and $N$ and only identity morphisms. The map that takes gamegraphs to their outcomes is a functor from $\mathbf{GGph_s}$ to $\mathbf{Out}$. This is true for both normal and mis\`{e}re outcomes.

Recall that an isomorphism in a category is an invertible morphism.

\begin{example}
Consider the digraph map $\alpha:V(\vec K_2^\complement)\to V(\vec K_2)$ shown in Figure~\ref{fig:bijective hom}. It is clear that $\alpha$ is a bijective homomorphism but $\alpha^{-1}$ is not a homomorphism. This shows that in $\mathbf{Gph}$ a bijective homomorphism might not be an isomorphism. In fact, the isomorphisms in this category are bijective maps for which $(p,q)$ is an arrow if and only if $(\alpha(p),\alpha(q))$ is an arrow.  
\end{example}

\begin{figure}[ht]
\begin{tikzpicture}[scale=.8]
\node (02) [small vert,fill=orange!50,star,star points=4] at (-1,1) {};
\node (01) [small vert,fill=springgreen!50,star,star points=3] at (0,1) {};
\end{tikzpicture}
\quad
\raisebox{.07cm}{\hbox{$\overset\alpha\longrightarrow$}}
$\quad$
\begin{tikzpicture}[scale=.8]
\node (b) [small vert,fill=orange!50,star,star points=4] at (3,1) {};
\node (d) [small vert,fill=springgreen!50,star,star points=3] at (4,1) {};
\path [d] (b) to (d);
\end{tikzpicture}

\caption{
\label{fig:bijective hom}
A bijective digraph homomorphism that is not an isomorphism.}
\end{figure}
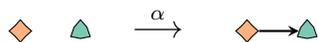

The situation is less complicated in $\mathbf{RGph}$.

\begin{proposition}\label{prop:inverse of bijective option preserving}
The inverse of a bijective option preserving rulegraph map $\alpha: \mathsf{R}\to \mathsf{S}$ is also option preserving.
\end{proposition}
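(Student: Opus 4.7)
The plan is to show directly that for every $q\in V(\mathsf{S})$ we have $\Opt(\alpha^{-1}(q))=\alpha^{-1}(\Opt(q))$, exploiting the fact that the option preserving condition for $\alpha$ itself is an equality of sets and that $\alpha$ is a bijection (so $\alpha^{-1}\circ\alpha$ is the identity on subsets of $V(\mathsf{R})$).

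First I would fix $q\in V(\mathsf{S})$ and set $p:=\alpha^{-1}(q)$, so that $\alpha(p)=q$. On one side I have $\Opt(\alpha^{-1}(q))=\Opt(p)$ by definition. On the other side, using the hypothesis that $\alpha$ is option preserving, I get $\Opt(q)=\Opt(\alpha(p))=\alpha(\Opt(p))$, and then applying $\alpha^{-1}$ and using injectivity of $\alpha$ yields
\[
\alpha^{-1}(\Opt(q))=\alpha^{-1}(\alpha(\Opt(p)))=\Opt(p).
\]
Comparing the two computations gives $\Opt(\alpha^{-1}(q))=\alpha^{-1}(\Opt(q))$, which is exactly what is needed.

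There is essentially no obstacle here: the key equality $\alpha^{-1}(\alpha(X))=X$ for $X\subseteq V(\mathsf{R})$ follows immediately from $\alpha$ being a bijection, and the option preserving condition was stated as a set equality, not merely an inclusion, so no extra work is required to invert it. The only thing worth pointing out is the contrast with the $\mathbf{Gph}$ situation in the preceding example: if option preserving had been defined only as a one-sided condition (as with ordinary digraph homomorphisms, which merely require $(u,v)\in E(C)\Rightarrow(\alpha(u),\alpha(v))\in E(D)$), then the inverse of a bijection would not automatically satisfy it, and the conclusion of the proposition would fail exactly as in Figure~\ref{fig:bijective hom}.
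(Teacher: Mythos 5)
Your argument is correct and is essentially the same as the paper's proof: both take $q=\alpha(p)$ and apply $\alpha^{-1}$ to the set identity $\Opt(\alpha(p))=\alpha(\Opt(p))$, using bijectivity to cancel $\alpha^{-1}\circ\alpha$ on subsets. You simply spell out the intermediate steps in slightly more detail.
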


\begin{proof}
Let $q$ be a position of $\mathsf{S}$. Then $q=\alpha(p)$ for some $p$. Applying $\alpha^{-1}$ to the identity $\Opt(\alpha(p))=\alpha(\Opt(p))$ gives $\alpha^{-1}(\Opt(q))=\Opt(\alpha^{-1}(q))$.   
\end{proof}

\begin{proposition}
\label{prop:categoryIso}
The isomorphisms in $\mathbf{RGph}$, $\mathbf{GGph}$, and $\mathbf{GGph_s}$ are the bijective morphisms.
\end{proposition}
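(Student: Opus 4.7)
The plan is to prove both implications of the characterization, separately for each of the three categories, and to leverage Proposition~\ref{prop:inverse of bijective option preserving} for the substantive direction. All three categories in question are concrete, with morphisms that are genuine functions on the underlying vertex sets (composed as set-theoretic functions, with identities being the usual identity functions). Therefore, the easy direction — every isomorphism is bijective — is immediate: if $\alpha$ has a two-sided categorical inverse $\beta$, then the set-theoretic identities $\beta\circ\alpha=\mathrm{id}_{V(\mathsf{R})}$ and $\alpha\circ\beta=\mathrm{id}_{V(\mathsf{S})}$ force $\alpha$ to be a bijection whose set-theoretic inverse is exactly $\beta$.

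For the converse direction, I would proceed category by category. First, suppose $\alpha\colon\mathsf{R}\to\mathsf{S}$ is a bijective morphism in $\mathbf{RGph}$; then it is a bijective option preserving rulegraph map, and Proposition~\ref{prop:inverse of bijective option preserving} delivers precisely what is needed — namely, that $\alpha^{-1}$ is again option preserving, hence a morphism in $\mathbf{RGph}$. The equalities $\alpha^{-1}\circ\alpha=\mathrm{id}_{\mathsf{R}}$ and $\alpha\circ\alpha^{-1}=\mathrm{id}_{\mathsf{S}}$ then make $\alpha$ an isomorphism. The case of $\mathbf{GGph}$ is identical since it is a full subcategory of $\mathbf{RGph}$ with exactly the same morphisms between gamegraphs.

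The case of $\mathbf{GGph_s}$ requires one additional observation: that $\alpha^{-1}$ is source preserving. But this is automatic — if $s_\mathsf{G}$ and $s_\mathsf{H}$ denote the respective starting positions and $\alpha(s_\mathsf{G})=s_\mathsf{H}$, then applying $\alpha^{-1}$ yields $\alpha^{-1}(s_\mathsf{H})=s_\mathsf{G}$, so $\alpha^{-1}$ is source preserving. Combined with option preservation of $\alpha^{-1}$ from Proposition~\ref{prop:inverse of bijective option preserving}, we conclude $\alpha^{-1}$ is a morphism in $\mathbf{GGph_s}$ and therefore $\alpha$ is an isomorphism there.

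I do not expect any real obstacle. The only subtlety worth noting in writing this up is to contrast the result with the situation in $\mathbf{Gph}$, where the example in Figure~\ref{fig:bijective hom} shows that bijective morphisms need not be isomorphisms; what rescues the three categories under consideration is precisely the option preservation condition, which forces arrows to go back under $\alpha^{-1}$ via Proposition~\ref{prop:inverse of bijective option preserving}. No calculation beyond a one-line invocation of that proposition is needed, so the entire argument should be short.
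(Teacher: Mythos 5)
Your proof is correct and follows essentially the same route as the paper: the forward direction is immediate from concreteness, and the substantive direction rests on Proposition~\ref{prop:inverse of bijective option preserving}. The only (cosmetic) difference is in the $\mathbf{GGph_s}$ case, where you obtain source preservation of $\alpha^{-1}$ directly by inverting $\alpha(s_\mathsf{G})=s_\mathsf{H}$, whereas the paper deduces it from surjectivity via Proposition~\ref{prop:sourceIffSurj}; both are valid and yours is if anything slightly more direct.
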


\begin{proof}
In all of these categories, an isomorphism must be a bijective morphism. On the other hand, the inverse of a bijective option preserving map is option preserving by Proposition~\ref{prop:inverse of bijective option preserving}. This shows that in $\mathbf{RGph}$ and $\mathbf{GGph}$ the bijective morphisms are isomorphisms. The inverse of a bijective morphism in $\mathbf{RGph}_s$ is an option preserving surjective map and hence source preserving by Proposition~\ref{prop:sourceIffSurj}. Thus the bijective morphisms in $\mathbf{GGph_s}$ are isomorphisms.
\end{proof}





The next result says that $\mathbf{RGph}$, $\mathbf{GGph}$, and $\mathbf{GGph_s}$ are isomorphism-closed subcategories of $\mathbf{Gph}$.

\begin{proposition}
The isomorphisms in $\mathbf{RGph}$, $\mathbf{GGph}$, and $\mathbf{GGph_s}$ are the digraph isomorphisms.
\end{proposition}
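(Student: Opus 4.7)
The plan is to combine Proposition~\ref{prop:categoryIso}, which already identifies the isomorphisms in the three categories as the bijective morphisms, with a direct unpacking of what it means to be a digraph isomorphism. Recall that a digraph isomorphism is a bijection $\alpha$ such that $(u,v)\in E(C)$ if and only if $(\alpha(u),\alpha(v))\in E(D)$. So the task reduces to showing that, for bijective digraph maps between rulegraphs (or gamegraphs), ``option preserving'' is equivalent to ``the arrow relation is preserved in both directions.''

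First I would handle the forward direction: every bijective option preserving map $\alpha\colon C\to D$ between rulegraphs is a digraph isomorphism. One inclusion is Proposition~\ref{prop:option preserving implies arrows preserved}, which gives $(p,q)\in E(C)\Rightarrow (\alpha(p),\alpha(q))\in E(D)$. For the reverse implication, suppose $(\alpha(p),\alpha(q))\in E(D)$, i.e.\ $\alpha(q)\in\Opt(\alpha(p))=\alpha(\Opt(p))$. Then $\alpha(q)=\alpha(r)$ for some $r\in\Opt(p)$, and injectivity forces $q=r$, giving $(p,q)\in E(C)$.

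Next I would address the converse: every digraph isomorphism $\alpha\colon\mathsf{R}\to\mathsf{S}$ between rulegraphs is option preserving (and hence, being bijective, is an isomorphism in $\mathbf{RGph}$ by Proposition~\ref{prop:categoryIso}). For the inclusion $\alpha(\Opt(p))\subseteq\Opt(\alpha(p))$, apply the homomorphism property. For the reverse inclusion, take $s\in\Opt(\alpha(p))$; surjectivity of $\alpha$ gives $s=\alpha(q)$ for some $q$, then the ``only if'' direction of the digraph isomorphism condition yields $q\in\Opt(p)$, so $s\in\alpha(\Opt(p))$. This handles both $\mathbf{RGph}$ and $\mathbf{GGph}$ at once, since $\mathbf{GGph}$ is a full subcategory and gamegraphs are in particular rulegraphs.

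Finally, for $\mathbf{GGph_s}$ I would observe that any digraph isomorphism between gamegraphs automatically takes the unique source to the unique source: being a source is a purely digraph-theoretic property (no in-neighbors) and is preserved by digraph isomorphism, while each gamegraph has exactly one source by definition. So such an $\alpha$ is source preserving, its inverse $\alpha^{-1}$ is also a digraph isomorphism and hence source preserving as well, and by the previous paragraph both are option preserving. Thus $\alpha$ is an isomorphism in $\mathbf{GGph_s}$. The only step that requires a little care is the reverse-arrow direction in the first paragraph, where one must use injectivity together with option preservation; the rest is a straightforward bookkeeping verification.
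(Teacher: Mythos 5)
Your proof is correct, but it takes a more hands-on route than the paper for the harder direction. The paper's argument is two sentences: it declares the implication ``digraph isomorphism $\Rightarrow$ categorical isomorphism'' to be clear, and for the converse it invokes the purely categorical fact that an isomorphism in a subcategory of $\mathbf{Gph}$ (which these are, by Proposition~\ref{prop:option preserving implies arrows preserved}) is automatically an isomorphism in $\mathbf{Gph}$, since both the map and its inverse are then digraph homomorphisms composing to identities. You instead route everything through Proposition~\ref{prop:categoryIso}: for the converse you take a bijective option preserving map and verify the two-sided arrow condition directly, using injectivity to upgrade ``$\alpha(q)=\alpha(r)$ for some $r\in\Opt(p)$'' to $q\in\Opt(p)$; and for the forward direction you actually prove the ``clear'' claim, checking that a digraph isomorphism is option preserving (homomorphism property for one containment, surjectivity plus the reverse arrow condition for the other) and that it automatically preserves the unique source in the $\mathbf{GGph_s}$ case. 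Your version is longer but self-contained and fills in details the paper elides; the paper's version is more economical because it outsources the converse to a general category-theoretic observation. Both are sound, and your injectivity step in the reverse-arrow argument is exactly the point that needs care.
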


\begin{proof}
It is clear that a digraph isomorphism between appropriate objects is also an isomorphism in each of these categories. On the other hand, an isomorphism in one of these subcategories of $\mathbf{Gph}$ must be isomorphism in $\mathbf{Gph}$.
\end{proof}


\begin{example}
Figure~\ref{fig:SB} shows two non-isomorphic rulegraphs $\mathsf{R}$ and $\mathsf{S}$. One can find injective option preserving maps $\mathsf{R}\to\mathsf{S}$ and $\mathsf{S}\to\mathsf{R}$. The range of the $\mathsf{R}\to\mathsf{S}$ map is the vertex set of $\mathsf{S}$ without the leftmost vertex, while the range of the $\mathsf{S}\to\mathsf{R}$ map is the vertex set of $\mathsf{R}$ without the top-left vertex. Thus $\mathbf{RGph}$ does not have the Schr\"oder--Bernstein property.
\end{example}

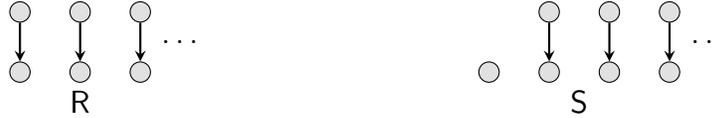
\begin{figure}[ht]
\centering
\begin{tikzpicture}[scale=.8]
\node (t) at (9.7,.5) {$\dots$};
\node (11) [small vert] at (7,1) {};
\node (22) [small vert] at (8,1) {};
\node (33) [small vert] at (9,1) {};
\node (1) [small vert] at (7,0) {};
\node (2) [small vert] at (8,0) {};
\node (3) [small vert] at (9,0) {};
\path [d] (11) to (1);
\path [d] (22) to (2);
\path [d] (33) to (3);
\node (R) at (8,-.5) {$\mathsf{R}$};
\end{tikzpicture}
\hfil
\begin{tikzpicture}[scale=.8]
\node (t) at (9.7,.5) {$\dots$};
\node (11) [small vert] at (7,1) {};
\node (22) [small vert] at (8,1) {};
\node (33) [small vert] at (9,1) {};
\node (1) [small vert] at (7,0) {};
\node (2) [small vert] at (8,0) {};
\node (3) [small vert] at (9,0) {};
\node     [small vert] at (6,0) {};
\path [d] (11) to (1);
\path [d] (22) to (2);
\path [d] (33) to (3);
\node (S) at (7.5,-.5) {$\mathsf{S}$};
\end{tikzpicture}

\caption{
\label{fig:SB}
Two non-isomorphic rulegraphs.
}
\end{figure}

One consequence of the next result is that $\mathbf{GGph}$ has the Schr\"oder--Bernstein property.

\begin{proposition}
If $\alpha:\mathsf{G}\to \mathsf{H}$ and $\beta:\mathsf{H}\to \mathsf{G}$ are option preserving gamegraph maps, then both $\alpha$ and $\beta$ are source preserving.
\end{proposition}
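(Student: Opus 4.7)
The plan is to exploit that the formal birthday is a valuation, hence preserved by option preserving maps via Proposition~\ref{prop:nim preserving}, together with the fact that in any gamegraph the source is distinguished by having the largest formal birthday.

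I would first establish the auxiliary claim that in any gamegraph $\mathsf{K}$ with source $t$ and any position $q\neq t$, one has $\fbd(q)<\fbd(t)$. The argument is short: since $q$ is a subposition of $t$ but $q\neq t$, there is a directed walk $t=q_0,q_1,\ldots,q_k=q$ with $k\geqslant 1$; at each step $q_{i+1}\in\Opt(q_i)$ forces $\fbd(q_i)\geqslant\fbd(q_{i+1})+1>\fbd(q_{i+1})$ directly from the recursive definition of $\fbd$, and composing these strict inequalities gives $\fbd(t)>\fbd(q)$. In particular, $t$ is the unique position of $\mathsf{K}$ whose formal birthday equals $\fbd(\mathsf{K})=\fbd(t)$.

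Next, applying Proposition~\ref{prop:nim preserving} with the valuation $\fbd$ to both $\alpha$ and $\beta$, and letting $s_\mathsf{G}, s_\mathsf{H}$ denote the respective sources, I would obtain
\[
\fbd(\mathsf{G})=\fbd(s_\mathsf{G})=\fbd(\alpha(s_\mathsf{G}))\leqslant\fbd(\mathsf{H}),
\]
and symmetrically $\fbd(\mathsf{H})\leqslant\fbd(\mathsf{G})$, hence $\fbd(\mathsf{G})=\fbd(\mathsf{H})$. Then $\alpha(s_\mathsf{G})$ is a position of $\mathsf{H}$ with formal birthday equal to $\fbd(\mathsf{H})$, so by the auxiliary claim $\alpha(s_\mathsf{G})=s_\mathsf{H}$ and $\alpha$ is source preserving. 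Interchanging the roles of $\alpha$ and $\beta$ gives the analogous conclusion for $\beta$.

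I do not anticipate a serious obstacle here; the only delicate point is the auxiliary claim, which relies crucially on the gamegraph condition that every position is a subposition of the source and on the strict inequality $\fbd(q_i)>\fbd(q_{i+1})$ that is baked into the definition of $\fbd$ through the ``$+1$'' inside the supremum.
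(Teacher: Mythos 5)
Your proof is correct and uses the same essential ingredients as the paper's: that $\fbd$ is a valuation preserved by option preserving maps (Proposition~\ref{prop:nim preserving}) and that a proper subposition has strictly smaller formal birthday, so the source is the unique position of maximal formal birthday. The paper merely packages this as a proof by contradiction chaining the inequalities through $\beta\circ\alpha$, whereas you argue directly by first showing $\fbd(\mathsf{G})=\fbd(\mathsf{H})$; the two arguments are essentially the same.
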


\begin{proof}
For a contradiction suppose without loss of generality that $\alpha$ is not source preserving. Then the source $s_{\mathsf{G}}$ of $\mathsf{G}$ maps to a position $\alpha(s_{\mathsf{G}})$ that is different from the source $s_{\mathsf{H}}$ of $\mathsf{H}$. Since $\mathsf{H}$ is a gamegraph, $\alpha(s_{\mathsf{G}})$ is a subposition of $s_{\mathsf{H}}$. Hence $\fbd(\alpha(s_{\mathsf{G}}))<\fbd(s_{\mathsf{H}})$. Since $\beta$ preserves formal birthdays and $\beta(s_{\mathsf{H}})$ is a subposition of $s_{\mathsf{G}}$, 
\[
\fbd(\beta(\alpha(s_{\mathsf{G}})))<\fbd(\beta(s_{\mathsf{G}}))\le \fbd(s_{\mathsf{G}}).
\]
This is a contradiction since $\beta\circ\alpha$ also preserves formal birthdays.
\end{proof}

The maps $\alpha$ and $\beta$ in the previous proposition must be surjective by Proposition~\ref{prop:sourceIffSurj}. Thus, if $\alpha$ or $\beta$ are also injective, then both are also isomorphisms.

\section{Quotients}\label{sec:quotients}

Recall that a congruence relation is an equivalence relation that is compatible with the structure. 

\begin{definition}
An equivalence relation $\sim$ on the positions of a rulegraph $\mathsf{R}$ is a \emph{congruence relation} if $p\sim q$ implies $[\Opt(p)]=[\Opt(q)]$ with the interpretation $[S]=\{[s]\mid s\in S\}$.
\end{definition}

\begin{example}
The diagonal equivalence relation $\Delta$, whose equivalence classes are singleton sets, is a congruence relation on any rulegraph. The trivial equivalence relation $\nabla$ with only one equivalence class is not a congruence relation on a rulegraph unless every position is terminal. 
\end{example}

In the context of gamegraphs, a partition induced by a congruence relation is called congruential in \cite{EmulationalEquivalence}.

\begin{proposition}
\label{prop:kernel is congruence relation}
If $\alpha: \mathsf{R}\to \mathsf{S}$ is an option preserving rulegraph map, then the kernel of $\alpha$ is a congruence relation.
\end{proposition}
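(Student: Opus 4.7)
The plan is to unpack the definition of the kernel and use the option-preserving property directly, without needing any auxiliary lemma. Write $\sim$ for the kernel of $\alpha$, so that $p\sim q$ if and only if $\alpha(p)=\alpha(q)$. This is automatically an equivalence relation, so the only thing to verify is the compatibility condition: whenever $p\sim q$, the sets of equivalence classes $[\Opt(p)]$ and $[\Opt(q)]$ coincide.

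First I would observe that if $p\sim q$, then $\alpha(p)=\alpha(q)$, and applying the defining identity of an option preserving map on both sides gives
\[
\alpha(\Opt(p))=\Opt(\alpha(p))=\Opt(\alpha(q))=\alpha(\Opt(q)).
\]
So the two option sets have the same image under $\alpha$. This is the one substantive ingredient; the rest is a bookkeeping step translating "same image under $\alpha$" into "same set of $\sim$-classes."

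For that translation, I would argue by double inclusion. Take any $s\in\Opt(p)$; then $\alpha(s)\in\alpha(\Opt(p))=\alpha(\Opt(q))$, so $\alpha(s)=\alpha(t)$ for some $t\in\Opt(q)$. Hence $s\sim t$ and $[s]=[t]\in[\Opt(q)]$, giving $[\Opt(p)]\subseteq[\Opt(q)]$. The reverse inclusion is identical with the roles of $p$ and $q$ swapped, so $[\Opt(p)]=[\Opt(q)]$ and $\sim$ is a congruence relation.

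There is no real obstacle here; the proof is essentially a one-line consequence of option preservation once one unwinds the definitions. The only subtlety worth flagging is that option preservation gives equality of the images $\alpha(\Opt(p))=\alpha(\Opt(q))$, not a bijection between $\Opt(p)$ and $\Opt(q)$, which is why the conclusion is phrased in terms of classes $[\Opt(p)]=[\Opt(q)]$ rather than pointwise matching of options.
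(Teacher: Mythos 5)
Your proof is correct and follows essentially the same route as the paper's: both reduce the claim to the chain $\alpha(\Opt(p))=\Opt(\alpha(p))=\Opt(\alpha(q))=\alpha(\Opt(q))$ and then translate equality of images into equality of sets of $\sim$-classes by double inclusion. The only cosmetic difference is that the paper picks an arbitrary class $[r]\in[\Opt(p)]$ with $r\sim s$ for $s\in\Opt(p)$, whereas you work directly with the representative $s\in\Opt(p)$; these are equivalent.
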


\begin{proof}
Let $p\sim q$, where $\sim$ is the kernel of $\alpha$. That is, $\alpha(p)=\alpha(q)$. We show that $[\Opt(p)]\subseteq[\Opt(q)]$. The reverse containment will follow by symmetry. Assume $[r]\in [\Opt(p)]$, so that $r\sim s$ for some $s\in\Opt(p)$. Then 
\[
\alpha(r)=\alpha(s)\in\alpha(\Opt(p))=\Opt(\alpha(p))=\Opt(\alpha(q))=\alpha(\Opt(q)).
\]
Hence $\alpha(r)=\alpha(t)$ for some $t\in\Opt(q)$. Thus $r\sim t$ and so $[r]\in [\Opt(q)]$.
\end{proof}

Let $\sim$ be a congruence relation on the positions of rulegraph $\mathsf{R}$. The quotient digraph $\mathsf{R}/{\sim}$ has vertex set $V(\mathsf{R})/{\sim}$ and arrow set $\{([p],[q])\mid p\in V(\mathsf{R}),\ q\in\Opt(p)\}$. 

\begin{lemma}
\label{lem:compatible}
Let $\sim$ be a congruence relation on a rulegraph. If $[q]\in\Opt([p])$, then there exists an $r\in[q]\cap \Opt(p)$.
\end{lemma}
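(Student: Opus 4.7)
The plan is to unravel what it means for $[q] \in \Opt([p])$ in the quotient digraph $\mathsf{R}/{\sim}$, and then to transport that information back to the specific representative $p$ via the congruence property.

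First, I would apply the definition of the arrow set of $\mathsf{R}/{\sim}$. The hypothesis $[q] \in \Opt([p])$ means that the pair $([p],[q])$ is an arrow in the quotient, so by definition there exist vertices $a,b \in V(\mathsf{R})$ with $a \sim p$, $b \sim q$, and $b \in \Opt(a)$. Note that $a$ need not equal $p$, which is precisely the issue the lemma addresses: we want to replace the arbitrary representative $a$ with the given $p$.

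Next I would invoke the congruence property. Since $p \sim a$, the definition of a congruence relation gives $[\Opt(p)] = [\Opt(a)]$. Because $b \in \Opt(a)$, its class $[b]$ lies in $[\Opt(a)]$, hence in $[\Opt(p)]$. Therefore $[b] = [r]$ for some $r \in \Opt(p)$. Combining this with $b \sim q$ yields $r \sim b \sim q$, so $r \in [q]$. Hence $r \in [q] \cap \Opt(p)$, as required.

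The argument is short and largely a matter of careful bookkeeping; the only subtlety worth flagging in the write-up is the distinction between the arrow $([p],[q])$ arising from some representative $a \sim p$ versus arising from $p$ itself, which is exactly what the congruence hypothesis is designed to bridge. There is no real obstacle beyond keeping the representatives straight.
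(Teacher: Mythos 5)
Your argument is correct and is essentially identical to the paper's proof: both unpack the arrow $([p],[q])$ into representatives $a\sim p$, $b\sim q$ with $b\in\Opt(a)$, then use the congruence identity $[\Opt(p)]=[\Opt(a)]$ to find $r\in\Opt(p)$ with $[r]=[b]=[q]$. No issues.
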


\begin{proof}
Since $[q]\in\Opt([p])$, $q'\in\Opt(p')$ for some $p'\in[p]$ and $q'\in[q]$. Since $p\sim p'$ and $\sim$ is a congruence relation, 
\[
\{[r]\mid r\in \Opt(p)\}=[\Opt(p)]=[\Opt(p')]=\{[r']\mid r'\in \Opt(p')\}.
\]
Since $[q']\in[\Opt(p')]$, $[q']=[r]$ for some $r\in \Opt(p)$. This $r$ satisfies $r\in[q']=[q]$.
\end{proof}

\begin{proposition}\label{prop:quotient is ruleset}
If $\mathsf{R}$ is a rulegraph and $\sim$ is a congruence relation, then $\mathsf{R}/{\sim}$ is a rulegraph.
\end{proposition}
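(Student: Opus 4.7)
The goal is to show that $\mathsf{R}/{\sim}$ has no infinite directed walk, since by construction it is already a digraph with vertex set $V(\mathsf{R})/{\sim}$. The plan is to argue by contradiction: assume an infinite directed walk $[p_1], [p_2], [p_3], \ldots$ exists in $\mathsf{R}/{\sim}$, and then lift this walk back to an infinite directed walk in $\mathsf{R}$ itself, contradicting the hypothesis that $\mathsf{R}$ is a rulegraph.

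The lifting is exactly what Lemma~\ref{lem:compatible} is designed for. I would choose any representative $q_1 \in [p_1]$, and then build the lift recursively. Given that we have produced $q_n \in [p_n]$ with $(q_{i}, q_{i+1}) \in E(\mathsf{R})$ for all $i < n$, we use the fact that $([p_n], [p_{n+1}])$ is an arrow in $\mathsf{R}/{\sim}$, which means $[p_{n+1}] \in \Opt([p_n]) = \Opt([q_n])$. Applying Lemma~\ref{lem:compatible} with $p := q_n$ and $q := p_{n+1}$, we obtain some $q_{n+1} \in [p_{n+1}] \cap \Opt(q_n)$. This produces the desired walk $q_1, q_2, q_3, \ldots$ in $\mathsf{R}$.

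The resulting walk is infinite because it has one vertex $q_n$ for each $n$, and consecutive vertices are connected by an arrow of $\mathsf{R}$ by construction. This contradicts the assumption that $\mathsf{R}$ has no infinite directed walk, so no infinite walk can exist in $\mathsf{R}/{\sim}$ either.

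The only subtlety is making sure that the recursive choice of representatives is legitimate; this requires the axiom of dependent choice in the infinite case, but that is standard. The real content is already packaged in Lemma~\ref{lem:compatible}, which guarantees that whenever we have an arrow in the quotient and a specific representative of its tail, we can pick a representative of its head that is an actual out-neighbor in $\mathsf{R}$. Once that lemma is in hand, the proof is a short induction/recursion with no other obstacles.
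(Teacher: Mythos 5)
Your proof is correct and follows essentially the same route as the paper: assume an infinite walk in the quotient, then use Lemma~\ref{lem:compatible} to recursively lift it to an infinite walk in $\mathsf{R}$, contradicting the rulegraph hypothesis. The only cosmetic difference is that you pick an arbitrary representative $q_1\in[p_1]$ where the paper starts from $p_1$ itself; the remark about dependent choice is a fair point the paper leaves implicit.
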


\begin{proof}
For a contradiction suppose that $[p_1],[p_2],[p_3],\ldots$ is an infinite walk in $\mathsf{R}/{\sim}$. We will recursively find an infinite walk $p_1,p_2^*,p_3^*,\ldots$ in $\mathsf{R}$ such that $p_i^*\sim p_i$ for all $i\ge 2$. Applying Lemma~\ref{lem:compatible} to $[p_2]\in\Opt([p_1])$ gives a $p^*_2\in [p_2]\cap \Opt(p_1)$.
Now, applying Lemma~\ref{lem:compatible} to $[p_3]\in\Opt([p_2])=\Opt([p_2^*])$ gives a $p^*_3\in [p_3]\cap \Opt(p_2^*)$. Iterating this process creates the infinite walk in $\mathsf{R}$, which is a contradiction.
\end{proof}





Note that as a consequence the quotient digraph has no loops. In light of the previous proposition, we refer to $\mathsf{R}/{\sim}$ as a \emph{quotient rulegraph}. Note that in the quotient rulegraph $\Opt([p])=\{[q]\mid q\in\Opt(p)\}$.

\begin{remark}
\label{rem:mustEqual}
Assume $\Opt([p])$ does not contain any nontrivial congruence classes. If $p\sim q$, then 
\[
\{\{r\}\mid r\in\Opt(p)\}=[\Opt(p)]=
[\Opt(q)]=\{\{r\}\mid r\in\Opt(q)\},
\]
and so $\Opt(p)=\Opt(q)$. In particular, if a congruence relation only identifies the positions in a set $A$, then $p,q\in A$ implies $\Opt(p)=\Opt(q)$.
\end{remark}

Loosely speaking, the next result tells us that there is no ``vertical collapsing" in a quotient rulegraph.

\begin{proposition}
\label{prop:noVerticalCollapse}
If $\sim$ is a congruence relation on a rulegraph $\mathsf{R}$ and $q$ is a subposition of $p$, then $p$ and $q$ are not related unless $p=q$. In particular, if $\mathsf{R}$ is a gamegraph, then the congruence class of the starting position $s$ is $[s]=\{s\}$.
\end{proposition}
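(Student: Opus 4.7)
The plan is to derive a contradiction by producing an infinite directed walk in the quotient rulegraph $\mathsf{R}/{\sim}$. Suppose toward contradiction that $q$ is a subposition of $p$, that $p \sim q$, and that $p \ne q$. Since $p \ne q$, the witnessing directed walk from $p$ to $q$ in $\mathsf{R}$ cannot have length zero, so it has the form $p = p_0, p_1, \ldots, p_n = q$ with $n \geqslant 1$.

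Next I would project this walk through the canonical map $p \mapsto [p]$. By the definition of the quotient digraph, each arrow $(p_i, p_{i+1}) \in E(\mathsf{R})$ yields an arrow $([p_i],[p_{i+1}]) \in E(\mathsf{R}/{\sim})$, so $[p_0], [p_1], \ldots, [p_n]$ is a directed walk in $\mathsf{R}/{\sim}$ of length $n \geqslant 1$. The endpoints coincide because $[p_0] = [p] = [q] = [p_n]$, so this is a closed walk of positive length.

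Concatenating this closed walk with itself countably many times produces an infinite directed walk in $\mathsf{R}/{\sim}$. But by Proposition~\ref{prop:quotient is ruleset} the quotient $\mathsf{R}/{\sim}$ is itself a rulegraph, and by definition a rulegraph admits no infinite directed walk. This contradiction establishes the main claim.

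For the ``in particular'' clause, if $\mathsf{R}$ is a gamegraph with starting position $s$, then every position $p$ of $\mathsf{R}$ is by definition a subposition of $s$. Hence if $p \sim s$, applying the main claim with the roles of $p$ and $q$ played by $s$ and $p$ forces $p = s$, so $[s] = \{s\}$. The main obstacle is negligible: essentially all the work has already been done in Proposition~\ref{prop:quotient is ruleset}, and the only point requiring care is to observe that $p \ne q$ makes the walk length strictly positive so that iterating genuinely lengthens the walk.
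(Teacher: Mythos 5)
Your proof is correct and follows essentially the same route as the paper: project the witnessing walk into the quotient, observe that the endpoints coincide to get a closed walk of positive length, and contradict Proposition~\ref{prop:quotient is ruleset}. The paper's version is just terser, leaving implicit the step of iterating the cycle into an infinite walk that you spell out.
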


\begin{proof}
For a contradiction suppose $p\sim q$ and there is a walk ${p=p_0},p_1, \ldots, {p_n=q}$.  Then $[p_0], [p_1], \ldots,  {[p_n]=[p_0]}$ is a cycle in $\mathsf{R}/{\sim}$, contradicting~Proposition \ref{prop:quotient is ruleset}.
\end{proof}

\begin{proposition}
If $\mathsf{G}$ is a gamegraph with starting position $s$ and $\sim$ is a congruence relation, then $\mathsf{G}/{\sim}$ is a gamegraph whose starting position is $[s]$.
\end{proposition}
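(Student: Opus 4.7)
The plan is to verify three things in turn: that $\mathsf{G}/{\sim}$ is a rulegraph, that $[s]$ is a source from which every class is reachable, and that $[s]$ is the \emph{unique} source.

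First, since $\mathsf{G}$ is in particular a rulegraph, Proposition~\ref{prop:quotient is ruleset} already gives that $\mathsf{G}/{\sim}$ is a rulegraph, so I get the absence of infinite directed walks for free. Next, to see that every class is a subposition of $[s]$, I would take an arbitrary position $[p]$ of $\mathsf{G}/{\sim}$, pick a representative $p$, and use that $\mathsf{G}$ is a gamegraph to obtain a directed walk $s=p_0,p_1,\dots,p_n=p$ in $\mathsf{G}$. By the definition of the arrow set of $\mathsf{G}/{\sim}$, this projects to the walk $[s]=[p_0],[p_1],\dots,[p_n]=[p]$ in $\mathsf{G}/{\sim}$, which witnesses $[p]$ as a subposition of $[s]$.

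The main step is showing that $[s]$ is a source and is the only source. To show $[s]$ is a source, suppose for contradiction that some arrow $([p],[s])$ exists in $\mathsf{G}/{\sim}$. By Lemma~\ref{lem:compatible} applied to $[s]\in\Opt([p])$, there is some $r\in[s]\cap\Opt(p)$. But Proposition~\ref{prop:noVerticalCollapse} (applied to the gamegraph $\mathsf{G}$) gives $[s]=\{s\}$, forcing $r=s$ and hence an arrow $(p,s)$ in $\mathsf{G}$, contradicting that $s$ is the unique source of $\mathsf{G}$. For uniqueness, suppose $[p]$ is any source of $\mathsf{G}/{\sim}$ with $[p]\neq[s]$. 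Lifting the walk from $s$ to $p$ as above, I obtain a projected walk $[s]=[p_0],\dots,[p_n]=[p]$ with $n\ge 1$ (else $p=s$). Then $[p_{n-1}]$ is an in-neighbor of $[p]$ in $\mathsf{G}/{\sim}$, and I only need to rule out that $[p_{n-1}]=[p]$; but $p_{n-1}\neq p$ (rulegraphs have no loops), and since $p$ is a subposition of $p_{n-1}$ via the arrow $(p_{n-1},p)$, Proposition~\ref{prop:noVerticalCollapse} forces $[p_{n-1}]\neq[p]$. So $[p]$ actually has an in-neighbor, contradicting that it is a source.

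The one subtle point I expect to be the main obstacle is precisely the uniqueness of the source: it is tempting to assume classes of source-candidates automatically avoid loops in the quotient, but one really has to invoke Proposition~\ref{prop:noVerticalCollapse} to rule out the case $[p_{n-1}]=[p]$. Once those two appeals to that proposition (once for $[s]=\{s\}$, once for the in-neighbor of $[p]$) are in hand, the remainder is just bookkeeping about lifting walks through the quotient map.
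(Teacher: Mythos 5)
Your proof is correct and follows essentially the same route as the paper's: quotient-is-a-rulegraph via Proposition~\ref{prop:quotient is ruleset}, projecting walks to get reachability from $[s]$, and using Proposition~\ref{prop:noVerticalCollapse} to show $[s]$ is a source, with uniqueness following because any other candidate source's class acquires an in-neighbor in the quotient. Your final worry about $[p_{n-1}]=[p]$ is actually superfluous, since a vertex with an in-neighbor (even itself) is by definition not a source, and the quotient is already known to be loop-free.
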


\begin{proof}
Proposition~\ref{prop:quotient is ruleset} implies that $\mathsf{G}/{\sim}$ has no infinite walks. We show that $[s]$ is a source of $\mathsf{G}/{\sim}$. For a contradiction suppose that $[s]\in\Opt([p])$ for some $p$. Then $s\sim q$ for some $q\in\Opt(p)$. This is a contradiction since $s=q$ by Proposition~\ref{prop:noVerticalCollapse}. Now we show that $[s]$ is the only source of $\mathsf{G}/{\sim}$. For a contradiction suppose that $[r]$ is also a source. Since $r$ is a subposition of $s$, $r\in\Opt(p)$ for some $p$. Hence $[r]\in\Opt([p])$, which is a contradiction. Proposition~\ref{prop:noVerticalCollapse} together with the definition of the quotient rulegraph implies that every position of $\mathsf{G}/{\sim}$ is a subposition of $[s]$.
\end{proof}

We refer to $\mathsf{G}/{\sim}$ as a \emph{quotient gamegraph}.

\begin{proposition}
\label{prop:quotientPreserves}
If $\sim$ is a congruence relation on the rulegraph $\mathsf{R}$, then the quotient map $p\mapsto[p]:\mathsf{R}\to \mathsf{R}/{\sim}$ is option preserving.
\end{proposition}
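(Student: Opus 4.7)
The plan is to verify the definition of option preserving directly, namely that
\[
\operatorname{Opt}([p]) = \{[q] \mid q \in \operatorname{Opt}(p)\}
\]
for every position $p$ of $\mathsf{R}$, by establishing the two containments separately.

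For the ``$\supseteq$'' direction, I would argue straight from the definition of the arrow set of the quotient digraph: by construction, the set $E(\mathsf{R}/{\sim})$ contains every pair $([p],[q])$ with $q \in \operatorname{Opt}(p)$, so each such $[q]$ lies in $\operatorname{Opt}([p])$. This half is essentially tautological and requires no use of the congruence property.

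For the ``$\subseteq$'' direction, the key tool is Lemma~\ref{lem:compatible}. Given $[q] \in \operatorname{Opt}([p])$, that lemma produces an $r \in [q] \cap \operatorname{Opt}(p)$, and then $[q] = [r]$ witnesses that $[q]$ belongs to $\{[s] \mid s \in \operatorname{Opt}(p)\}$. Combining both inclusions yields the required equality, which is exactly the option preserving condition applied to the quotient map. There is no real obstacle here; the only subtlety is remembering that ``$[q] \in \operatorname{Opt}([p])$'' is defined via the existence of representatives $p' \sim p$ and $q' \sim q$ with $q' \in \operatorname{Opt}(p')$, and Lemma~\ref{lem:compatible} is precisely what lets us pull such a representative back to an actual option of $p$ itself.
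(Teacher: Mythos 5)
Your proof is correct and follows essentially the same route as the paper: the paper's one-line proof simply invokes the identity $\Opt([p])=\{[q]\mid q\in\Opt(p)\}=[\Opt(p)]$, which is exactly the equality you establish, and the nontrivial inclusion does indeed rest on Lemma~\ref{lem:compatible} as you say. You have merely written out the detail that the paper leaves as a remark following Proposition~\ref{prop:quotient is ruleset}.
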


\begin{proof}
The result follows from $\Opt([p])=\{[q]\mid q\in\Opt(p)\}=[\Opt(p)]$.
\end{proof}

\begin{proposition}
\label{prop:CongPreservesValuation}
If $\sim$ is a congruence relation on a rulegraph and $f$ is a valuation, then $f(p)=f(q)$ for all $p\sim q$.
\end{proposition}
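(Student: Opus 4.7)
The plan is to reduce this to the two results we already have in hand: Proposition~\ref{prop:quotientPreserves}, which says the quotient map is option preserving, and Proposition~\ref{prop:nim preserving}, which says option preserving maps preserve any valuation. Composing these two facts is exactly what is needed, so no new induction is required.

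In more detail, I would first invoke Proposition~\ref{prop:quotientPreserves} to note that the quotient map $\pi\colon \mathsf{R}\to \mathsf{R}/{\sim}$ defined by $\pi(p)=[p]$ is an option preserving rulegraph map (the codomain is a rulegraph by Proposition~\ref{prop:quotient is ruleset}). Next, since $f$ is determined by a single $\mu\colon 2^Y\to Y$, it makes sense both on $\mathsf{R}$ and on $\mathsf{R}/{\sim}$, and Proposition~\ref{prop:nim preserving} applied to $\pi$ yields $f(\pi(p))=f(p)$ for every $p\in V(\mathsf{R})$, i.e.\ $f([p])=f(p)$.

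Finally, if $p\sim q$ then $[p]=[q]$ as elements of $V(\mathsf{R}/{\sim})$, so
\[
f(p)=f([p])=f([q])=f(q),
\]
which is the desired conclusion. There is essentially no obstacle here beyond bookkeeping: the only subtle point worth checking is that the valuation $f$ really is defined on both rulegraphs from the single $\mu$, which is exactly the remark made immediately after the definition of valuation. (If one preferred a self-contained argument without citing Proposition~\ref{prop:nim preserving}, one could instead run a direct transfinite structural induction on $p$: the congruence property gives $[\Opt(p)]=[\Opt(q)]$, and the inductive hypothesis turns this into $f(\Opt(p))=f(\Opt(q))$, whence $f(p)=\mu(f(\Opt(p)))=\mu(f(\Opt(q)))=f(q)$. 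But the two-line argument via the quotient map is preferable.)
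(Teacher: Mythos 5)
Your proposal is correct and is exactly the paper's argument: the paper's proof is a one-liner citing Propositions~\ref{prop:nim preserving} and~\ref{prop:quotientPreserves}, and your write-up simply fills in the same composition (quotient map is option preserving, option preserving maps preserve valuations, and $p\sim q$ gives $[p]=[q]$). The point you flag about $f$ being defined on both $\mathsf{R}$ and $\mathsf{R}/{\sim}$ from the single $\mu$ is indeed the only detail worth noting, and it is handled by the remark following the definition of valuation.
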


\begin{proof}
The result follows from Propositions~\ref{prop:nim preserving} and \ref{prop:quotientPreserves}.
\end{proof}

The following result can be thought of as the First Isomorphism Theorem for rulegraphs.

\begin{proposition}
\label{prop:fundamental homomorphism theorem}
If $\alpha:\mathsf{R}\to \mathsf{S}$ is an option preserving rulegraph map, then $\alpha(\mathsf{R})$ is isomorphic to the quotient rulegraph $\mathsf{R}/\alpha$.
\end{proposition}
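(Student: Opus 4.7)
The plan is to mimic the classical First Isomorphism Theorem from universal algebra. By Proposition~\ref{prop:kernel is congruence relation}, the kernel of $\alpha$ (which we are slightly abusing notation to also call $\alpha$ in $\mathsf{R}/\alpha$) is a congruence relation on $\mathsf{R}$, so the quotient rulegraph $\mathsf{R}/\alpha$ is a legitimate object of $\mathbf{RGph}$ by Proposition~\ref{prop:quotient is ruleset}. I would then define the candidate isomorphism $\bar\alpha:\mathsf{R}/\alpha\to\alpha(\mathsf{R})$ in the obvious way by $\bar\alpha([p]):=\alpha(p)$.

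The first step is the standard well-definedness and bijectivity check: $\bar\alpha$ is well-defined and injective because $[p]=[q]$ holds precisely when $\alpha(p)=\alpha(q)$ by the definition of the kernel, and it is surjective onto $\alpha(V(\mathsf{R}))$ (the vertex set of the image rulegraph, as defined after Proposition~\ref{prop:image ruleset}) by construction. So $\bar\alpha$ is a bijection on vertices.

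Next, I would verify that $\bar\alpha$ is option preserving. For any position $p$ of $\mathsf{R}$, using the description of options in a quotient rulegraph given just before Remark~\ref{rem:mustEqual}, together with the hypothesis that $\alpha$ is option preserving, I compute
\[
\bar\alpha(\Opt([p]))=\bar\alpha(\{[q]\mid q\in\Opt(p)\})=\{\alpha(q)\mid q\in\Opt(p)\}=\alpha(\Opt(p))=\Opt(\alpha(p))=\Opt(\bar\alpha([p])),
\]
where the penultimate equality uses that $\alpha(\mathsf{R})$ is the subdigraph of the codomain induced by $\alpha(V(\mathsf{R}))$, so that options in $\alpha(\mathsf{R})$ of $\alpha(p)$ agree with options in the codomain (this is the small point worth double-checking: every out-neighbor of $\alpha(p)$ in the codomain already lies in $\alpha(V(\mathsf{R}))$ since it equals some $\alpha(q)$ with $q\in\Opt(p)$).

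Finally, since $\bar\alpha$ is a bijective option preserving rulegraph map, Proposition~\ref{prop:inverse of bijective option preserving} guarantees that $\bar\alpha^{-1}$ is also option preserving, and hence $\bar\alpha$ is an isomorphism in $\mathbf{RGph}$ by Proposition~\ref{prop:categoryIso}. The only step requiring any care is the option-preservation computation above, where one must remember to interpret $\Opt(\alpha(p))$ inside the induced subdigraph $\alpha(\mathsf{R})$ rather than inside the original codomain; everything else is essentially bookkeeping that parallels the universal-algebra argument.
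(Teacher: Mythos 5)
Your proposal is correct and follows exactly the route the paper intends: the paper's proof is the one-line remark that ``$[p]\mapsto\alpha(p)$ is a rulegraph isomorphism by Proposition~\ref{prop:categoryIso}'', and your write-up simply supplies the well-definedness, bijectivity, and option-preservation checks (including the correct observation that options of $\alpha(p)$ in the induced subdigraph $\alpha(\mathsf{R})$ agree with those in the codomain) that the paper leaves to the reader.
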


\begin{proof}
Using Proposition~\ref{prop:categoryIso}, it is easy to see that $[p]\mapsto\alpha(p):\mathsf{R}/\alpha\to\alpha(\mathsf{R})$ is a rulegraph isomorphism.
\end{proof}

\begin{proposition}
If $\alpha:\mathsf{G}\to \mathsf{H}$ is a source and option preserving gamegraph map, then $\mathsf{G}/\alpha$ is isomorphic to $\mathsf{H}$.
\end{proposition}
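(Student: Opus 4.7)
The plan is to reduce this statement directly to the First Isomorphism Theorem for rulegraphs (Proposition~\ref{prop:fundamental homomorphism theorem}), which already gives $\mathsf{G}/\alpha \cong \alpha(\mathsf{G})$ as rulegraphs. What remains is to identify the image rulegraph $\alpha(\mathsf{G})$ with all of $\mathsf{H}$, both as a vertex set and as a gamegraph.

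First I would invoke Proposition~\ref{prop:sourceIffSurj}: since $\alpha$ is option preserving and source preserving, it must be surjective, so $\alpha(V(\mathsf{G})) = V(\mathsf{H})$. Hence the image rulegraph $\alpha(\mathsf{G})$, which is the subdigraph of $\mathsf{H}$ induced by $\alpha(V(\mathsf{G}))$, coincides with $\mathsf{H}$ as a digraph (Proposition~\ref{prop:image ruleset} guarantees that this induced subdigraph is genuinely a gamegraph, and here it is literally $\mathsf{H}$). Combined with Proposition~\ref{prop:fundamental homomorphism theorem}, this yields a rulegraph isomorphism $\varphi:\mathsf{G}/\alpha \to \mathsf{H}$ defined by $\varphi([p]) = \alpha(p)$.

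Next I need to check that $\varphi$ is actually an isomorphism in the gamegraph category, not merely in $\mathbf{RGph}$. By Proposition~\ref{prop:categoryIso}, it suffices to verify that $\varphi$ is a bijective option preserving gamegraph map; bijectivity and the option preserving property are immediate from Proposition~\ref{prop:fundamental homomorphism theorem}. To confirm $\varphi$ is a gamegraph map, I would note that $\mathsf{G}/\alpha$ is a gamegraph with starting position $[s_{\mathsf{G}}]$ (by the proposition establishing that the quotient of a gamegraph by a congruence is a gamegraph), and $\varphi([s_{\mathsf{G}}]) = \alpha(s_{\mathsf{G}}) = s_{\mathsf{H}}$ because $\alpha$ is source preserving, so $\varphi$ takes source to source.

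There is essentially no obstacle here; the argument is a bookkeeping assembly of earlier results. The only step that requires a moment of care is ensuring that the isomorphism produced by the First Isomorphism Theorem actually respects the distinguished starting positions, which is precisely what the source preserving hypothesis supplies.
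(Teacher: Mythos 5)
Your proposal is correct and follows essentially the same route as the paper, which simply cites Propositions~\ref{prop:fundamental homomorphism theorem} and~\ref{prop:sourceIffSurj}; you have merely spelled out the bookkeeping (surjectivity gives $\alpha(\mathsf{G})=\mathsf{H}$, and source preservation ensures the isomorphism respects the starting positions) that the paper leaves implicit.
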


\begin{proof}
The result follows from Propositions~\ref{prop:fundamental homomorphism theorem} and ~\ref{prop:sourceIffSurj}.
\end{proof}

\section{Minimum quotient}\label{sec:simplification}

\begin{definition}
We call a rulegraph $\mathsf{R}$ \emph{simple} if every congruence relation on $V(\mathsf{R})$ is trivial. 
\end{definition}

The notion of simple is analogous to the same term in the context of universal algebras. Indeed, many of the results in this paper have direct analogs in the algebraic setting. 

The next result immediately follows from Propositon~\ref{prop:fundamental homomorphism theorem}.

\begin{proposition}
\label{prop:slim iff injective}
A rulegraph $\mathsf{R}$ is simple if and only if every option preserving map $\alpha:\mathsf{R}\to\mathsf{S}$ is injective.
\end{proposition}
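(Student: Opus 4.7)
The plan is a short two-way argument using the machinery already set up, essentially the analog of the standard fact for universal algebras.

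For the forward direction, I would assume $\mathsf{R}$ is simple and take an arbitrary option preserving map $\alpha:\mathsf{R}\to\mathsf{S}$. By Proposition~\ref{prop:kernel is congruence relation}, the kernel $\sim_\alpha$ of $\alpha$ is a congruence relation on $V(\mathsf{R})$. Since $\mathsf{R}$ is simple, the only congruence relation is the diagonal $\Delta$, so $\sim_\alpha = \Delta$. By definition of the kernel, this says $\alpha(p)=\alpha(q)$ implies $p=q$, i.e. $\alpha$ is injective.

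For the converse, I would assume every option preserving map out of $\mathsf{R}$ is injective, and let $\sim$ be any congruence relation on $V(\mathsf{R})$. By Proposition~\ref{prop:quotientPreserves}, the quotient map $\pi:\mathsf{R}\to\mathsf{R}/{\sim}$ defined by $p\mapsto[p]$ is an option preserving rulegraph map (noting that $\mathsf{R}/{\sim}$ is a rulegraph by Proposition~\ref{prop:quotient is ruleset}). By hypothesis $\pi$ is injective, which forces each equivalence class $[p]$ to be a singleton, so $\sim = \Delta$. Hence $\mathsf{R}$ is simple.

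There is no real obstacle here; both directions are one-line consequences of results already proved. The only mild subtlety is being clear about what ``trivial'' means in the definition of simple, but as already observed in the excerpt, $\nabla$ is almost never a congruence relation, so the content of simplicity is that $\Delta$ is the unique congruence relation, which is exactly what the kernel and quotient constructions above detect.
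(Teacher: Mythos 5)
Your proof is correct and is essentially the paper's argument: the paper derives the statement immediately from Proposition~\ref{prop:fundamental homomorphism theorem}, whose content is exactly the two ingredients you use explicitly (the kernel of an option preserving map is a congruence relation, and the quotient map by a congruence relation is option preserving). Your remark about ``trivial'' meaning $\Delta$ rather than $\nabla$ matches the paper's own later clarification that simplicity means $\Con(\mathsf{R})=\{\Delta\}$.
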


The following proposition provides an easy to check characterization of simple rulegraphs. 

\begin{proposition}
\label{prop:injective}
A rulegraph is simple if and only if the $\Opt$ map is injective.
\end{proposition}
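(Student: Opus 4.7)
The plan is to prove both implications; the content lives in the reverse direction, where well-foundedness of formal birthday is the essential ingredient.

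For the forward direction, I would argue the contrapositive. Suppose $\Opt(p)=\Opt(q)$ for some $p\ne q$, and let $\sim$ be the equivalence relation whose only non-singleton class is $\{p,q\}$. To verify that $\sim$ is a congruence I only need to check the nontrivial relation $p\sim q$, and here $[\Opt(p)]$ and $[\Opt(q)]$ are literally equal because $\Opt(p)$ and $\Opt(q)$ are equal as sets. This produces a nontrivial congruence, so $\mathsf{R}$ is not simple.

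For the reverse direction, assume $\Opt$ is injective and let $\sim$ be a congruence relation on $\mathsf{R}$. Suppose for contradiction that $\sim$ is nontrivial, so the collection of positions lying in a non-singleton class is nonempty. Since the formal birthday takes ordinal values, I can choose a position $p$ lying in a non-singleton class with $\fbd(p)$ minimum among all such positions; pick $q\sim p$ with $q\ne p$. Applying Proposition~\ref{prop:CongPreservesValuation} to the formal birthday valuation yields $\fbd(q)=\fbd(p)$.

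Every option $r\in\Opt(p)\cup\Opt(q)$ satisfies $\fbd(r)<\fbd(p)$, so by minimality $[r]=\{r\}$. Consequently $[\Opt(p)]$ and $[\Opt(q)]$ are sets of singletons, and the congruence condition $[\Opt(p)]=[\Opt(q)]$ unpacks to $\Opt(p)=\Opt(q)$ (this is precisely Remark~\ref{rem:mustEqual} applied at $p$ and $q$). Injectivity of $\Opt$ forces $p=q$, contradicting the choice of $q$. The only subtle point is the well-foundedness step; this is immediate because the ordinals are well-ordered and the set of formal birthdays of positions in non-singleton classes is nonempty by assumption.
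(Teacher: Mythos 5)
Your proposal is correct and follows essentially the same route as the paper: the forward direction identifies two positions with equal option sets to build a nontrivial congruence, and the reverse direction locates a ``minimal'' nontrivial class all of whose options lie in singleton classes, then applies Remark~\ref{rem:mustEqual} and injectivity of $\Opt$. The only cosmetic difference is that you extract the minimal bad position via the formal birthday valuation, whereas the paper picks a nontrivial congruence class with no nontrivial class among its options, justified by the absence of infinite walks in the quotient; these are interchangeable instances of the same well-foundedness argument.
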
 

\begin{proof}
For both implications, we will prove the contrapositive. 

First, suppose two different positions have the same options. Then we can identify these two positions, and hence create a nontrivial congruence relation.

Now, assume there is a nontrivial congruence relation. Pick a nontrivial congruence class $P$ that has no nontrivial congruence class as an option. Such $P$ exists, since the quotient rulegraph has no infinite walks. Any two positions in $P$ have the same option set, so the $\Opt$ map is not injective.
\end{proof}

A digraph in which no two vertices have the same out-neighborhood is called \emph{extensional} in \cite{DigraphParameters,extensionalAcyclic}.

\begin{proposition}
\label{prop:uniqueSlim}
If the quotients $\mathsf{R}/{\sim}$ and $\mathsf{R}/{\approx}$ of the rulegraph $\mathsf{R}$ are both simple, then the congruence relations $\sim$ and $\approx$ are the same.  
\end{proposition}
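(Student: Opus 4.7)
The plan is to work with the \emph{join} of the two congruence relations. Let $\tau$ be the smallest equivalence relation on $V(\mathsf{R})$ containing both $\sim$ and $\approx$, which can be described concretely as the transitive closure of $\sim\cup\approx$.

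First I would verify that $\tau$ is itself a congruence relation on $\mathsf{R}$. By induction on the length of a chain $p=r_0,r_1,\ldots,r_n=q$ in which consecutive terms are related by $\sim$ or $\approx$, it suffices to handle the one-step case where $p\sim q$ (the case $p\approx q$ is symmetric). Since $\sim$ is a congruence, $[\Opt(p)]_\sim=[\Opt(q)]_\sim$, and since each $\sim$-class is contained in a unique $\tau$-class, this equality passes to $[\Opt(p)]_\tau=[\Opt(q)]_\tau$, establishing the congruence property for $\tau$.

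Because $\sim\subseteq\tau$, the assignment $[p]_\sim\mapsto[p]_\tau$ yields a well-defined surjective map $\bar{\pi}:\mathsf{R}/{\sim}\to\mathsf{R}/\tau$. Using the explicit formula $\Opt([p]_\sim)=\{[q]_\sim\mid q\in\Opt(p)\}$ from Section~\ref{sec:quotients} (and the analogous one for $\tau$), a direct check shows $\bar{\pi}$ is option preserving. Since $\mathsf{R}/{\sim}$ is simple, Proposition~\ref{prop:slim iff injective} forces $\bar{\pi}$ to be injective. Injectivity of $\bar{\pi}$ says exactly that $[p]_\tau=[q]_\tau$ implies $[p]_\sim=[q]_\sim$, i.e.\ $\tau\subseteq{\sim}$. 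Combined with $\sim\subseteq\tau$, we conclude $\sim=\tau$. Running the symmetric argument with the simple quotient $\mathsf{R}/{\approx}$ gives $\approx=\tau$, and therefore $\sim=\approx$.

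The main obstacle I anticipate is verifying that $\tau$ is a congruence relation, since $\tau$ is only defined as a transitive closure and one must argue carefully that the congruence property is preserved under taking such closures. The key observation that makes this go through is the one noted above: equality of $\sim$-class partitions of option sets refines to equality of the coarser $\tau$-class partitions, because every $\sim$-class sits inside a single $\tau$-class. Everything else reduces to standard applications of results already established in the paper (Propositions~\ref{prop:slim iff injective} and~\ref{prop:quotientPreserves}).
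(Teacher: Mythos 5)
Your proof is correct, but it takes a genuinely different route from the paper's. The paper argues by minimal counterexample: assuming $\sim$ and $\approx$ differ, it picks positions with $p\approx q$ but $p\not\sim q$ of smallest formal birthday (using Proposition~\ref{prop:CongPreservesValuation} to see that $\fbd(p)=\fbd(q)$), and then verifies directly that $[p]_\sim$ and $[q]_\sim$ have the same option set in $\mathsf{R}/{\sim}$, contradicting simplicity via Proposition~\ref{prop:injective}. You instead form the join $\tau$ of the two congruences and apply the structural fact that every option preserving map out of a simple rulegraph is injective (Proposition~\ref{prop:slim iff injective}) to the canonical surjection $\mathsf{R}/{\sim}\to\mathsf{R}/\tau$, concluding $\sim=\tau=\approx$. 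Your key lemma --- that the transitive closure of the union of two congruence relations is again a congruence relation --- is precisely the fact the paper cites from \cite{EmulationalEquivalence} in Remark~\ref{rem:altBowtie} rather than proving, and your one-step argument for it (each $\sim$-class lies in a single $\tau$-class, so equality of $\sim$-partitions of option sets coarsens to equality of $\tau$-partitions, after which the chain case is just transitivity of equality) is sound; you do implicitly need $\mathsf{R}/\tau$ to be a rulegraph, which Proposition~\ref{prop:quotient is ruleset} supplies. What your approach buys is independence from the explicit formal-birthday induction and a viewpoint that sits naturally with the lattice $\Con(\mathsf{R})$ developed later (uniqueness of the simple quotient becomes the statement that a simple quotient must already be the quotient by the join of any pair of congruences); what the paper's approach buys is a shorter, self-contained argument that needs nothing beyond well-foundedness and the definition of simplicity.
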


\begin{proof}
For a contradiction, suppose the two relations $\sim$ and $\approx$ are not the same. We let $[p]$ denote the congruence class of $p$ with respect to $\sim$. Then without loss of generality we can find positions $p$ and $q$ with smallest formal birthday such that $p\approx q$ but $p\not\sim q$. Note that the formal birthdays of $p$ and $q$ are the same by Proposition~\ref{prop:CongPreservesValuation}. We show that $\mathsf{R}/{\sim}$ is not simple because $\Opt([p])=\Opt([q])$. Assume $x\in\Opt(p)$, so that $[x]\in\Opt([p])$. Since $p\approx q$, there is a $y\in\Opt(q)$ such that $x\approx y$. The choice of $p$ and $q$ implies that $x\sim y$. Hence $[x]=[y]\in\Opt([q])$. Thus $\Opt([p])\subseteq\Opt([q])$. The other containment follows by symmetry.
\end{proof}

\begin{proposition}
\label{prop:unique simplification}
For every rulegraph $\mathsf{R}$ there is a unique congruence relation $\bowtie$ on $V(\mathsf{R})$ such that $\mathsf{R}/{\bowtie}$ is simple. 
\end{proposition}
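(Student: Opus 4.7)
The uniqueness statement is already provided by Proposition~\ref{prop:uniqueSlim}, so the entire work lies in producing at least one such congruence. The plan is to define $\bowtie$ as the maximum (join), in the lattice of equivalence relations on $V(\mathsf{R})$, of the collection of all congruence relations on $\mathsf{R}$. Explicitly, $p \bowtie q$ iff there exist a finite chain $p = p_0, p_1, \ldots, p_n = q$ and congruence relations $\sim_1, \ldots, \sim_n$ with $p_{i-1} \sim_i p_i$ for each $i$. The family of all congruence relations is a subset of $2^{V(\mathsf{R}) \times V(\mathsf{R})}$, so this join exists and is well-defined.

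The first step is to verify that $\bowtie$ is itself a congruence relation. The key observation is that for any single congruence $\sim_i$ and any $a \sim_i b$, we have $[\Opt(a)]_{\sim_i} = [\Opt(b)]_{\sim_i}$, and because $\sim_i \subseteq \bowtie$ this equality descends to $[\Opt(a)]_{\bowtie} = [\Opt(b)]_{\bowtie}$. Applying this along each link of a defining chain for $p \bowtie q$ and composing the resulting equalities yields $[\Opt(p)]_{\bowtie} = [\Opt(q)]_{\bowtie}$, as required.

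Next I would show that $\mathsf{R}/{\bowtie}$ is simple. Suppose for contradiction there is a nontrivial congruence $\sim'$ on $\mathsf{R}/{\bowtie}$. Pull it back to $V(\mathsf{R})$ by declaring $p \sim q$ iff $[p]_{\bowtie} \sim' [q]_{\bowtie}$. This is plainly an equivalence relation, and writing the $\sim$-classes of options as unions of $\bowtie$-classes lying in a common $\sim'$-class, the hypothesis that $\sim'$ is a congruence on $\mathsf{R}/{\bowtie}$ translates directly into $[\Opt(p)]_\sim = [\Opt(q)]_\sim$ whenever $p \sim q$. Thus $\sim$ is a congruence on $\mathsf{R}$, and because $\sim'$ is nontrivial, $\sim$ strictly contains $\bowtie$, contradicting the maximality built into the construction.

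The main obstacle I expect is the first step: checking by hand that the join of arbitrarily many congruences is again a congruence, since the definition of the join involves forming a transitive closure and therefore a chain argument. Once that lattice-theoretic fact is secured, simplicity of $\mathsf{R}/{\bowtie}$ and the pull-back contradiction are essentially bookkeeping about quotients of quotients, together with a single appeal to Proposition~\ref{prop:uniqueSlim} for uniqueness.
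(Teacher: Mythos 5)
Your proof is correct, but it follows the route the paper relegates to Remark~\ref{rem:altBowtie} (the ``alternative, nonconstructive way'') rather than the paper's actual proof. The paper builds $\bowtie$ by transfinite recursion on formal birthday: at stage $\xi$ it identifies positions of formal birthday $\xi$ whose option sets have equal classes at earlier stages, takes a direct limit, and then gets simplicity for free from injectivity of $\Opt$ via Proposition~\ref{prop:injective}. You instead take $\bowtie$ to be the join of all congruence relations, i.e.\ the transitive closure of their union, verify the congruence property by pushing $[\Opt(p_{i-1})]_{\sim_i}=[\Opt(p_i)]_{\sim_i}$ down to $\bowtie$-classes along each link of a chain, and obtain simplicity by pulling back a hypothetical nontrivial congruence on $\mathsf{R}/{\bowtie}$ to a congruence strictly above $\bowtie$, contradicting maximality. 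Both steps check out: the descent of option-class equality along $\sim_i\subseteq{\bowtie}$ is sound, and the pull-back really is a congruence because $[x]_{\sim}$ is determined by the $\sim'$-class of $[x]_{\bowtie}$. Your chain argument is in effect a self-contained proof of the fact the paper cites externally (that the transitive closure of a union of congruences is a congruence). What the paper's construction buys is an explicit, algorithmic description of the minimum quotient (identify same-option positions level by level, as in Example~\ref{ex:slimifaction}); what yours buys is brevity, independence from the formal-birthday stratification, and the immediate byproduct that $\bowtie$ is the maximum element of $\Con(\mathsf{R})$, a fact the paper needs anyway when it shows $\Con(\mathsf{R})$ is a complete lattice. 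There is no circularity in your use of the join, since you construct it directly rather than invoking that later lattice result.
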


\begin{proof}
The uniqueness of $\bowtie$ follows from Proposition~\ref{prop:uniqueSlim}. To show existence, we construct $\bowtie$ by transfinite construction with respect to the formal birthday of the positions. At each step we identify all positions with formal birthday $\xi$ that have the same options.
Let $V_\xi:=\{p\in V(\mathsf{R})\mid \fbd(p)=\xi\}$ be the set of positions with formal birthday $\xi$ and let $V_{\le\xi}:=\bigcup_{\zeta\le\xi}V_\zeta$. Note that $V(\mathsf{R})$ is the direct limit $\varinjlim V_{\le\xi}$. We build the congruence relation $\bowtie$ as a direct limit ${\bowtie}:=\varinjlim{\bowtie}_\xi$, where $\bowtie_\xi$ is a congruence relation on $V_{\le\xi}$ satisfying ${\bowtie}_\xi\rvert_{V_{\zeta}}= {\bowtie_{\zeta}}$ for all $\zeta<\xi$. For $p,q\in V_\xi$, we let $p\bowtie_\xi q$ exactly when $[\Opt(p)]_\zeta=[\Opt(q)]_\zeta$ for some $\zeta<\xi$. 
The construction of $\bowtie$ guarantees that the option map $\Opt$ is injective on $\mathsf{R}/{\bowtie}$. Hence $\mathsf{R}/{\bowtie}$ is simple by Proposition~\ref{prop:injective}.
\end{proof}

\begin{remark}
\label{rem:altBowtie}
We sketch an alternative, though nonconstructive way to prove the previous proposition. Define $p\bowtie q$ if and only if $p\sim q$ for some congruence relation $\sim$. Then it is not hard to check that the rulegraph $\mathsf{R}/{\bowtie}$ is simple. The only step in the proof that is not so straightforward is to show that $\bowtie$ is transitive. This follows from~\cite[Lemma~2.4]{EmulationalEquivalence} stating that the transitive closure of the union of two congruence relations is also a congruence relation.
\end{remark}

We call the rulegraph $\mathsf{R}/{\bowtie}$ the \emph{minimum quotient} of $\mathsf{R}$. The corresponding quotient map is an example of a surjective option preserving rulegraph map from $\mathsf{R}$ to a simple rulegraph. Such a map is essentially unique. 
In fact, if $\alpha:\mathsf{R}\to \mathsf{S}$ is a surjective option preserving rulegraph map and $\mathsf{S}$ is simple, then $\mathsf{S}$ is isomorphic to $\mathsf{R}/{\bowtie}$.

\begin{example}
Figure~\ref{fig:slimificationInf} shows an option preserving map from the gamegraph $\mathsf{G}$ consisting of the infinite forward paths with $n$ vertices for $n\in\mathbb{N}$ with their source vertices identified to the simple gamegraph $\mathsf{S}$. The coloring and shapes indicate the source and option preserving map $\alpha:\mathsf{G}\to\mathsf{S}$. Note that $\mathsf{S}$ is simple since $\Opt$ is injective. Thus, $\mathsf{G}/{\bowtie}$ is isomorphic to $\mathsf{S}$.
\end{example}

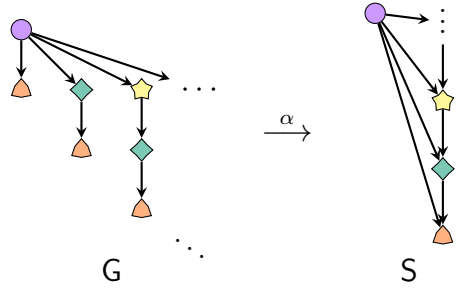
\begin{figure}[ht]
\centering
\begin{tikzpicture}[scale=.8,rotate=-90]
\node (00) [small vert,fill=purple!50] at (0,0) {};
\node (10) [small vert,fill=orange!50,star,star points=3] at (1,0) {};
\node (11) [small vert,fill=springgreen!50,star,star points=4] at (1,1) {};
\node (21) [small vert,fill=orange!50,star,star points=3] at (2,1) {};
\node (12) [small vert,fill=yellow!50,star,star points=5] at (1,2) {};
\node (22) [small vert,fill=springgreen!50,star,star points=4] at (2,2) {};
\node (32) [small vert,fill=orange!50,star,star points=3] at (3,2) {};
\node (dts) at (1,3) {$\cdots$};
\node at (3.5,2.8) {$\ddots$};
\path [d] (00) to (10);
\path [d] (00) to (11);
\path [d] (00) to (12);
\path [d] (11) to (21);
\path [d] (12) to (22);
\path [d] (22) to (32);
\path [d] (00) to (dts);  
\node at (4,1.5) {$\mathsf{G}$};
\end{tikzpicture}
\hspace{.1cm}
\raisebox{2cm}{\hbox{$\overset\alpha\longrightarrow$}}
$\quad$
\begin{tikzpicture}[scale=.9,rotate=-90]
\node (0) at (5.2,0) {$\vdots$};
\node (1) at (5.5,0) {};
\node (2) [small vert,fill=yellow!50,star,star points=5] at (6.5,0) {};
\node (3) [small vert,fill=springgreen!50,star,star points=4] at (7.5,0) {};
\node (4) [small vert,fill=orange!50,star,star points=3] at (8.5,0) {};
\node (s) [small vert,fill=purple!50] at (5.2,-1) {};

\path [d] (1) to (2);
\path [d] (2) to (3);
\path [d] (3) to (4);

\path [d] (s) to (2);
\path [d] (s) to (3);
\path [d] (s) to (4);

\path [d] (s) to (5.3,-.2);
\node at (9,-0.5) {$\mathsf{S}$};
\end{tikzpicture}

\caption{
\label{fig:slimificationInf}
An option and source preserving map from a game $\mathsf{G}$ to a simple game $\mathsf{S}$.
}
\end{figure}

\begin{example}
In Example~\ref{exa:NIM}, we introduced two different rulegraphs for the collection of all finite one-pile NIM gamegraphs. The second rulegraph is isomorphic to the minimum quotient of the first.
\end{example}

In practice we can find $\bowtie$ by recursively identifying positions that have the same options until no two such positions exist. The next example illustrates a consequence of Proposition~\ref{prop:uniqueSlim},  that the minimum quotient of a rulegraph is independent of the order in which one chooses to identify compatible positions.

\begin{example}\label{ex:slimifaction}
Figure~\ref{fig:slimification} shows one possible process to find the minimum quotient of a rulegraph through identifications indicated in red stars.
Since the identification at each step creates only one nontrivial congruence class, each position in this class must have exactly the same option set as explained in Remark~\ref{rem:mustEqual}. Note that in the first identification step we could have identified all three positions in the top row.
\end{example}

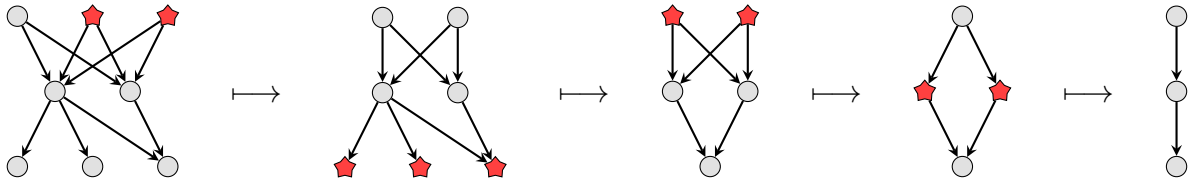
\begin{figure}[ht]

\begin{tikzpicture}[]
\node (-12) [small vert] at (-1,2) {};
\node (02) [small vert,fill=red!75,star,star points=5] at (0,2) {};
\node (12) [small vert,fill=red!75,star,star points=5] at (1,2) {};
\node (01) [small vert] at (-.5,1) {};
\node (11) [small vert] at (.5,1) {};
\node (00) [small vert] at (0,0) {};
\node (10) [small vert] at (1,0) {};
\node (-10) [small vert] at (-1,0) {};
\path [d] (-12) to (01);
\path [d] (-12) to (11);
\path [d] (02) to (01);
\path [d] (02) to (11);
\path [d] (12) to (01);
\path [d] (12) to (11);
\path [d] (01) to (00);
\path [d] (01) to (10);
\path [d] (11) to (10);
\path [d] (01) to (-10);
\end{tikzpicture}
$\quad$
\raisebox{1cm}{\hbox{$\longmapsto$}}
$\quad$
\begin{tikzpicture}
\node (-12) [small vert] at (2.5,2) {};
\node (02) [small vert] at (3.5,2) {};
\node (01) [small vert] at (2.5,1) {};
\node (11) [small vert] at (3.5,1) {};
\node (00) [small vert,fill=red!75,star,star points=5] at (3,0) {};
\node (10) [small vert,fill=red!75,star,star points=5] at (4,0) {};
\node (-10) [small vert, fill=red!75,star,star points=5] at (2,0) {};
\path [d] (-12) to (01);
\path [d] (-12) to (11);
\path [d] (02) to (01);
\path [d] (02) to (11);
\path [d] (01) to (00);
\path [d] (01) to (10);
\path [d] (11) to (10);
\path [d] (01) to (-10);
\end{tikzpicture}
$\quad$
\raisebox{1cm}{\hbox{$\longmapsto$}}
$\quad$
\begin{tikzpicture}
\node (-12) [small vert,fill=red!75,star,star points=5] at (6,2) {};
\node (02) [small vert,fill=red!75,star,star points=5] at (7,2) {};
\node (01) [small vert] at (6,1) {};
\node (11) [small vert] at (7,1) {};
\node (00) [small vert] at (6.5,0) {};
\path [d] (-12) to (01);
\path [d] (-12) to (11);
\path [d] (02) to (01);
\path [d] (02) to (11);
\path [d] (01) to (00);
\path [d] (11) to (00);
\end{tikzpicture}
$\quad$
\raisebox{1cm}{\hbox{$\longmapsto$}}
$\quad$
\begin{tikzpicture}
\node (02) [small vert] at (6.5,2) {};
\node (01) [small vert,fill=red!75,star,star points=5] at (6,1) {};
\node (11) [small vert,fill=red!75,star,star points=5] at (7,1) {};
\node (00) [small vert] at (6.5,0) {};
\path [d] (02) to (01);
\path [d] (02) to (11);
\path [d] (01) to (00);
\path [d] (11) to (00);
\end{tikzpicture}
$\quad$
\raisebox{1cm}{\hbox{$\longmapsto$}}
$\quad$
\begin{tikzpicture}
\node (02) [small vert] at (9,2) {};
\node (01) [small vert] at (9,1) {};
\node (00) [small vert] at (9,0) {};
\path [d] (02) to (01);
\path [d] (01) to (00);
\end{tikzpicture}

\caption{
\label{fig:slimification}
Finding the minimum quotient through a sequence of identifications indicated in red stars.  
}
\end{figure}

\begin{example}
The sum of two simple gamegraphs is not simple unless one of the game\-graphs is trivial. A terminal position in one gamegraph paired with a position whose only options are terminal in the other gamegraph gives a position whose only options are terminal in the sum. These are all identified in the minimum quotient. For example, Figure~\ref{fig:sum of slims} shows $\star 1$, $\star 2$, $\star 1+\star 2$, and the minimum quotient $(\star 1+\star 2)/{\bowtie}$.
\end{example}

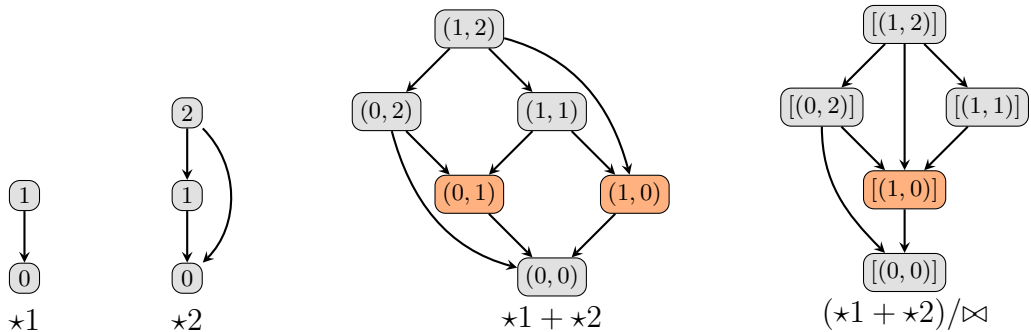
\begin{figure}[ht]
\begin{tikzpicture}[scale=1.1]
\node (a) [rect vert] at (0,1) {\scriptsize 1};
\node (b) [rect vert] at (0,0) {\scriptsize 0};
\path[d] (a) to (b);
\node at (0,-.5) {$\star 1$};
\end{tikzpicture}
\hfil
\begin{tikzpicture}[scale=1.1]
\node (a) [rect vert] at (0,2) {\scriptsize 2};
\node (b) [rect vert] at (0,1) {\scriptsize 1};
\node (c) [rect vert] at (0,0) {\scriptsize 0};
 \path[d] (a) to (b);
 \path[d] (b) to (c);
 \path[d,out=-45,in=45] (a) to (c);
 \node at (0,-.5) {$\star 2$};
 \end{tikzpicture}
 \hfil
 \begin{tikzpicture}[scale=1.1]
 \node (12) [rect vert] at (0,3) {\scriptsize $(1,2)$};
 \node (11) [rect vert] at (1,2) {\scriptsize $(1,1)$};
 \node (10) [rect vert,fill=orange!50] at (2,1) {\scriptsize $(1,0)$};
 \node (02) [rect vert] at (-1,2) {\scriptsize $(0,2)$};
 \node (01) [rect vert,fill=orange!50] at (0,1) {\scriptsize $(0,1)$};
 \node (00) [rect vert] at (1,0) {\scriptsize $(0,0)$};
 \path[d] (12) to (02);
 \path[d] (12) to (11);
 \path[d,out=-15,in=105] (12) to (10);
 \path[d] (11) to (10);
 \path[d] (02) to (01);
 \path[d] (01) to (00);
 \path[d,out=-75,in=-195] (02) to (00);
 \path[d] (11) to (01);
 \path[d] (10) to (00);
 \node at (1,-.5) {$\star 1+\star 2$};
 \end{tikzpicture}
 \hfil
 \begin{tikzpicture}[scale=1.1]
 \node (12) [rect vert] at (0,3) {\scriptsize $[(1,2)]$};
 \node (11) [rect vert] at (1,2) {\scriptsize $[(1,1)]$};
 \node (10) [rect vert,fill=orange!50] at (0,1) {\scriptsize $[(1,0)]$};
 \node (02) [rect vert] at (-1,2) {\scriptsize $[(0,2)]$};
 \node (00) [rect vert] at (0,0) {\scriptsize $[(0,0)]$};
 \path[d] (12) to (02);
 \path[d] (12) to (11);
 \path[d] (12) to (10);
 \path[d] (11) to (10);
 \path[d] (02) to (10);
 \path[d,out=-90] (02) to (00);
 \path[d] (10) to (00);
 \node at (0,-.5) {$(\star 1+\star 2)/{\bowtie}$};
 \end{tikzpicture}
 \hfil
\caption{
\label{fig:sum of slims}
Two simple gamegraphs whose sum is not simple.}
\end{figure}

\begin{remark}
\label{rem:slim equal traditional}
An impartial game is often defined \cite{albert2007lessons, ONAG,SiegelBook} recursively as a set $G$ containing its options, where options are games themselves. This means that a game is identified with the set of its options. To see that this traditional definition is a special case of our definition, we can recursively build a digraph that consists of arrows of the form $(P,Q)$ for $Q\in P$ where $P$ is a position of $G$. This digraph is a gamegraph $\mathsf{G}$. It is clear that the $\Opt$ function is injective on $\mathsf{G}$, and so $\mathsf{G}$ is simple by Proposition~\ref{prop:injective}. Conversely, in a given gamegraph $\mathsf{G}$ we can recursively replace each position $p$ with the set $P:=\Opt(p)$ of its options and the resulting set corresponding to the starting position is a game $G$. Note that this replacement also applies to the arrows. In fact, the minimum quotient of a gamegraph is essentially the result of this replacement process.
\end{remark}

\begin{example}
Figure~\ref{fig;identification} shows the process that recursively identifies each position of a gamegraph with the set of its options until a simple gamegraph is created. The starting position of the resulting gamegraph is $\{\emptyset,\{\emptyset\}\}$, which we can recognize as the traditional $*2$ game. Note that this process will not always yield a nimber. For example, applying this process to the rightmost gamegraph in Figure~\ref{fig:slimification} yields the starting position $\{\{\emptyset\}\}$, which is not a nimber but has nim-number 0.
\end{example}

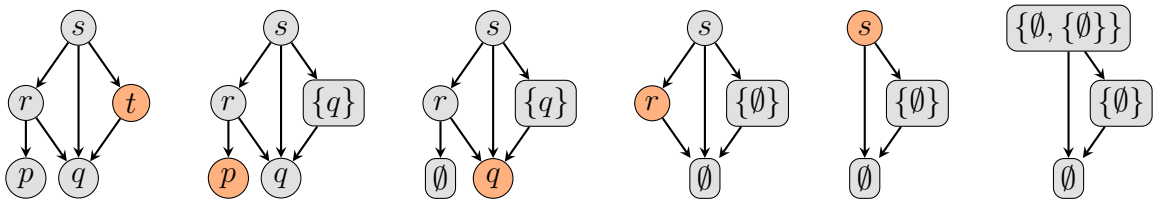
\begin{figure}[ht]
\centering
\begin{tikzpicture}[xscale=.7]
\node (02) [small vert] at (1,2) {$s$};
\node (01) [small vert] at (0,1) {$r$};
\node (11) [small vert,fill=orange!50] at (2,1) {$t$};
\node (00) [small vert] at (0,0) {$p$};
\node (10) [small vert] at (1,0) {$q$};
\path [d] (02) to (01);
\path [d] (02) to (11);
\path [d] (01) to (00);
\path [d] (01) to (10);
\path [d] (11) to (10);
\path [d] (02) to (10);
\end{tikzpicture}
\hfil
\begin{tikzpicture}[xscale=.7]
\node (02) [small vert] at (1,2) {$s$};
\node (01) [small vert] at (0,1) {$r$};
\node (11) [rect vert] at (2,1) {$\{q\}$};
\node (00) [small vert,fill=orange!50] at (0,0) {$p$};
\node (10) [small vert] at (1,0) {$q$};
\path [d] (02) to (01);
\path [d] (02) to (11);
\path [d] (01) to (00);
\path [d] (01) to (10);
\path [d] (11) to (10);
\path [d] (02) to (10);
\end{tikzpicture}
\hfil
\begin{tikzpicture}[xscale=.7]
\node (02) [small vert] at (1,2) {$s$};
\node (01) [small vert] at (0,1) {$r$};
\node (11) [rect vert] at (2,1) {$\{q\}$};
\node (00) [rect vert] at (0,0) {$\emptyset$};
\node (10) [small vert,fill=orange!50] at (1,0) {$q$};
\path [d] (02) to (01);
\path [d] (02) to (11);
\path [d] (01) to (00);
\path [d] (01) to (10);
\path [d] (11) to (10);
\path [d] (02) to (10);
\end{tikzpicture}
\hfil
\begin{tikzpicture}[xscale=.7]
\node (02) [small vert] at (1,2) {$s$};
\node (01) [small vert,fill=orange!50] at (0,1) {$r$};
\node (11) [rect vert] at (2,1) {$\{\emptyset\}$};
\node (10) [rect vert] at (1,0) {$\emptyset$};
\path [d] (02) to (01);
\path [d] (02) to (11);
\path [d] (01) to (10);
\path [d] (11) to (10);
\path [d] (02) to (10);
\end{tikzpicture}
\hfil
\begin{tikzpicture}[xscale=.7]
\node (02) [small vert,fill=orange!50] at (1,2) {$s$};
\node (11) [rect vert] at (2,1) {$\{\emptyset\}$};
\node (10) [rect vert] at (1,0) {$\emptyset$};
\path [d] (02) to (11);
\path [d] (11) to (10);
\path [d] (02) to (10);
\end{tikzpicture}
\hfil
\begin{tikzpicture}[xscale=.7]
\node (02) [rect vert] at (1,2) {$\{\emptyset,\{\emptyset\}\}$};
\node (11) [rect vert] at (2,1) {$\{\emptyset\}$};
\node (10) [rect vert] at (1,0) {$\emptyset$};
\path [d] (02) to (11);
\path [d] (11) to (10);
\path [d] (02) to (10);
\end{tikzpicture}

\caption{
\label{fig;identification}
The process of recursive identification of positions with their set of options. Each step shows the identification at the orange position. 
}
\end{figure}


Extending the definition for gamegraphs in \cite{EmulationalEquivalence} to rulegraphs, we say that rulegraphs $\mathsf{R}$ and $\mathsf{S}$ are \emph{emulationally equivalent} if there are congruence relations $\sim_1$ and $\sim_2$ such that $\mathsf{R}/{\sim_1}$ and $\mathsf{S}/{\sim_2}$ are isomorphic.

The next result follows immediately from Proposition~\ref{prop:fundamental homomorphism theorem}.

\begin{proposition}
\label{prop:equivalence between emulationally equivalent and option preserving}
Two rulegraphs $\mathsf{R}$ and $\mathsf{S}$ are emulationally equivalent if and only if there are surjective option preserving rulegraph maps $\alpha:\mathsf{R}\to \mathsf{T}$ and $\beta:\mathsf{S}\to \mathsf{T}$ for some rulegraph $\mathsf{T}$.
\end{proposition}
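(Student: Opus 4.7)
The plan is to prove both directions using Proposition~\ref{prop:fundamental homomorphism theorem} together with Proposition~\ref{prop:quotientPreserves}. Throughout, I will freely use the kernel of an option preserving map, which is a congruence relation by Proposition~\ref{prop:kernel is congruence relation}.

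For the forward direction, I assume $\mathsf{R}$ and $\mathsf{S}$ are emulationally equivalent, so there exist congruence relations $\sim_1$ on $\mathsf{R}$ and $\sim_2$ on $\mathsf{S}$ together with an isomorphism $\varphi: \mathsf{R}/{\sim_1} \to \mathsf{S}/{\sim_2}$. Set $\mathsf{T} := \mathsf{S}/{\sim_2}$. Then I take $\beta$ to be the quotient map $q \mapsto [q]$ from $\mathsf{S}$ to $\mathsf{T}$, which is option preserving by Proposition~\ref{prop:quotientPreserves} and surjective by construction, and I take $\alpha$ to be the composition of the quotient map $\mathsf{R}\to \mathsf{R}/{\sim_1}$ with $\varphi$. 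The composition of option preserving maps is option preserving, and a composition of surjections is surjective, so $\alpha$ and $\beta$ are the required maps.

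For the backward direction, suppose $\alpha: \mathsf{R}\to \mathsf{T}$ and $\beta: \mathsf{S}\to \mathsf{T}$ are surjective option preserving rulegraph maps. Let $\sim_1$ be the kernel of $\alpha$ and $\sim_2$ the kernel of $\beta$; these are congruence relations by Proposition~\ref{prop:kernel is congruence relation}. Since $\alpha$ is surjective, $\alpha(\mathsf{R}) = \mathsf{T}$, and Proposition~\ref{prop:fundamental homomorphism theorem} gives a rulegraph isomorphism $\mathsf{R}/{\sim_1} \cong \mathsf{T}$. Applying the same proposition to $\beta$ gives $\mathsf{S}/{\sim_2}\cong \mathsf{T}$. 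Composing the first isomorphism with the inverse of the second yields an isomorphism $\mathsf{R}/{\sim_1} \cong \mathsf{S}/{\sim_2}$, which is precisely the witness required for emulational equivalence.

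Neither direction has a genuine obstacle, since the First Isomorphism Theorem does all the heavy lifting; the only point to be careful about is that in the forward direction $\mathsf{T}$ may be taken as either quotient (they are isomorphic), and one must package the isomorphism $\varphi$ into one of the two surjective option preserving maps so that both maps land in the same rulegraph.
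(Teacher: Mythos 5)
Your proof is correct and takes the same route the paper intends: the paper simply states that the result ``follows immediately from Proposition~\ref{prop:fundamental homomorphism theorem}'', and your argument is exactly the standard unpacking of that claim --- quotient maps plus the isomorphism for the forward direction, kernels plus the First Isomorphism Theorem for the converse. No gaps; the only detail worth noting is that surjectivity of $\alpha$ is what guarantees $\alpha(\mathsf{R})=\mathsf{T}$ as induced subdigraphs, which you correctly use.
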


\begin{corollary}
Two rulegraphs are emulationally equivalent if and only if their minimum quotients are isomorphic.
\end{corollary}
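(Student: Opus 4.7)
The plan is to leverage Proposition~\ref{prop:equivalence between emulationally equivalent and option preserving} to reduce emulational equivalence to the existence of a common ``target'' rulegraph receiving surjective option preserving maps from both sides, and then to exploit the essential uniqueness of the minimum quotient (recorded in the text immediately following Proposition~\ref{prop:unique simplification}) to identify that target with a minimum quotient.

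The ($\Leftarrow$) direction is immediate from the definition of emulational equivalence: if $\mathsf{R}/{\bowtie}$ and $\mathsf{S}/{\bowtie}$ are isomorphic, then taking $\sim_1$ and $\sim_2$ to be the respective $\bowtie$ relations on $\mathsf{R}$ and $\mathsf{S}$ witnesses that $\mathsf{R}$ and $\mathsf{S}$ are emulationally equivalent.

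For the ($\Rightarrow$) direction, suppose $\mathsf{R}$ and $\mathsf{S}$ are emulationally equivalent. By Proposition~\ref{prop:equivalence between emulationally equivalent and option preserving}, there exist a rulegraph $\mathsf{T}$ together with surjective option preserving maps $\alpha:\mathsf{R}\to\mathsf{T}$ and $\beta:\mathsf{S}\to\mathsf{T}$. Let $q:\mathsf{T}\to\mathsf{T}/{\bowtie}$ denote the quotient map onto the minimum quotient of $\mathsf{T}$; by Proposition~\ref{prop:quotientPreserves} this map is option preserving, and it is surjective by construction. Since compositions of option preserving maps are option preserving (as noted in the remark preceding Proposition~\ref{prop:sourceIffSurj}) and since compositions of surjections are surjections, both $q\circ\alpha:\mathsf{R}\to\mathsf{T}/{\bowtie}$ and $q\circ\beta:\mathsf{S}\to\mathsf{T}/{\bowtie}$ are surjective option preserving maps onto the simple rulegraph $\mathsf{T}/{\bowtie}$. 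The paragraph following Proposition~\ref{prop:unique simplification} guarantees that any surjective option preserving map from a rulegraph onto a simple rulegraph identifies the codomain with the minimum quotient of the domain. Applying this observation to $q\circ\alpha$ yields $\mathsf{R}/{\bowtie}\cong\mathsf{T}/{\bowtie}$, and applying it to $q\circ\beta$ yields $\mathsf{S}/{\bowtie}\cong\mathsf{T}/{\bowtie}$. Transitivity of isomorphism then gives $\mathsf{R}/{\bowtie}\cong\mathsf{S}/{\bowtie}$, as desired.

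There is no significant obstacle here once the preceding machinery is in place: the argument is a short diagram chase that composes the given surjective option preserving maps with the canonical simplification map $q$ and then invokes the essential uniqueness property of the minimum quotient. The only point one must not forget to check is that $q\circ\alpha$ and $q\circ\beta$ remain both surjective and option preserving, which is automatic.
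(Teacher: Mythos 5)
Your proof is correct and follows exactly the route the paper intends: the backward direction is immediate from the definition of emulational equivalence, and the forward direction combines Proposition~\ref{prop:equivalence between emulationally equivalent and option preserving} with the essential-uniqueness statement following Proposition~\ref{prop:unique simplification} (the paper leaves this corollary without a written proof, but this is precisely the intended deduction).
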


\begin{example}
Figure~\ref{fig:DaniejelasCounterexample} shows that emulational equivalence does not imply the existence of an option preserving map. It is an easy exercise to check that the shape and color-coded maps $\alpha:\mathsf{G}\to \mathsf{K}$ and $\beta:\mathsf{H}\to \mathsf{K}$ are surjective and option preserving. 

Proposition \ref{prop:nim preserving} implies that option preserving maps preserve formal birthdays. Therefore, any such map from $\mathsf{G}$ to $\mathsf{H}$ has to map the green diamond from $\mathsf{G}$ to one of the two green diamonds in $\mathsf{H}$. Neither of these choices work since it is impossible to define the images of the blue 6-sided star and the purple 5-sided stars while maintaining the option preserving property.

An option preserving map from $\mathsf{H}$ to $\mathsf{G}$ would have to map both green diamonds from $\mathsf{H}$ to green diamond in $\mathsf{G}$ and both orange terminal positions to the left orange terminal position in $\mathsf{G}$. Then both purple 5-sided stars from $\mathsf{H}$ would map onto the left purple 5-sided star in $\mathsf{G}$. The red circle then breaks the rule for an option preserving map. 
\end{example}

\begin{figure}[ht]
\centering
\begin{tikzpicture}[scale=.8]
\node (G) at (1,-.5) {$\mathsf{G}$};
\node (a) [small vert,fill=rose!75] at (1,3) {};
\node (b1) [small vert,fill=purple!75,star,star points=5] at (0,2) {};
\node (b2) [small vert,fill=icyblue!75,star,star points=6] at (1,2) {};
\node (b3) [small vert,fill=purple!75,star,star points=5] at (2,2) {};
\node (c) [small vert,fill=springgreen!75,star,star points=4] at (.5,1) {};
\node (d1) [small vert,fill=orange!75,star,star points=3] at (.5,0) {};
\node (d2) [small vert,fill=orange!75,star,star points=3] at (1.5,0) {};
\path [d] (a) to (b1);
\path [d] (a) to (b2);
\path [d] (a) to (b3);
\path [d] (b1) to (c);
\path [d] (b2) to (c);
\path [d,out=-90,in=125] (b1) to (d1);
\path [d] (b3) to (c);
\path [d] (b3) to (d2);
\path [d] (c) to (d1);
\end{tikzpicture}
\quad
\raisebox{1.8cm}{\hbox{$\overset\alpha\longrightarrow$}}
\quad
\begin{tikzpicture}[scale=.8]
\node (K) at (4.5,-.5) {$\mathsf{K}$};
\node (1) [small vert,fill=rose!75] at (4.5,3) {};
\node (21) [small vert,fill=icyblue!75,star,star points=6] at (4,2) {};
\node (22) [small vert,fill=purple!75,star,star points=5] at (5,2) {};
\node (3) [small vert,fill=springgreen!75,star,star points=4] at (4.5,1) {};
\node (4) [small vert,fill=orange!75,star,star points=3] at (4.5,0) {};
\path [d] (1) to (21);
\path [d] (1) to (22);
\path [d] (21) to (3);
\path [d] (22) to (3);
\path [d,out=-90,in=55] (22) to (4);
\path [d] (3) to (4);
\end{tikzpicture}
\quad
\raisebox{1.8cm}{\hbox{$\overset\beta\longleftarrow$}}
\quad
\begin{tikzpicture}[scale=.8]

\node (H) at (8,-.5) {$\mathsf{H}$};
\node (s1) [small vert,fill=rose!75] at (8,3) {};
\node (s21) [small vert,fill=icyblue!75,star,star points=6] at (7,2) {};
\node (s22) [small vert,fill=purple!75,star,star points=5] at (8,2) {};
\node (s23) [small vert,fill=purple!75,star,star points=5] at (9,2) {};
\node (s31) [small vert,fill=springgreen!75,star,star points=4] at (7.5,1) {};
\node (s32) [small vert,fill=springgreen!75,star,star points=4] at (8.5,1) {};
\node (s41) [small vert,fill=orange!75,star,star points=3] at (7.5,0) {};
\node (s42) [small vert,fill=orange!75,star,star points=3] at (8.5,0) {};
\path [d] (s1) to (s21);
\path [d] (s1) to (s22);
\path [d] (s1) to (s23);
\path [d] (s21) to (s31);
\path [d] (s22) to (s32);
\path [d,out=-90,in=55] (s22) to (s41);
\path [d] (s23) to (s32);
\path [d,out=-90,in=55] (s23) to (s42);
\path [d] (s31) to (s41);
\path [d] (s32) to (s42);
\end{tikzpicture}
\caption{
\label{fig:DaniejelasCounterexample}
Two gamegraphs and their isomorphic minimum quotients.}
\end{figure}
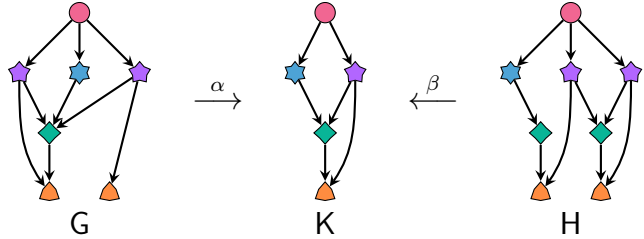

Emulational equivalence is an equivalence relation on the class of rulegraphs (respectively, gamegraphs). The simple rulegraphs (respectively, gamegraphs) form a class of unique representatives up to isomorphism. The unique representatives of gamegraphs are essentially those in Remark~\ref{rem:slim equal traditional}. 

\begin{proposition}
The set $\Con(\mathsf{R})$ of congruence relations on a rulegraph $\mathsf{R}$ forms a complete lattice under inclusion.
\end{proposition}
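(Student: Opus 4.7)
The plan is to show that $\Con(\mathsf{R})$ admits arbitrary joins under inclusion; together with the fact that $\Con(\mathsf{R})$ is nonempty (it contains the diagonal relation $\Delta$), this yields a complete lattice by the standard argument that arbitrary meets can be recovered as joins of the set of lower bounds.

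Given a family $\{\sim_\lambda\}_{\lambda\in\Lambda}$ of congruence relations on $\mathsf{R}$, I will define $\approx$ to be the transitive closure of $\bigcup_{\lambda\in\Lambda}\sim_\lambda$. Then $\approx$ is visibly an equivalence relation containing each $\sim_\lambda$, and it is contained in any equivalence relation with this property. Only the congruence condition on $\approx$ requires work.

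The congruence verification is the main obstacle. Suppose $p\approx q$, so there is a finite chain $p=x_0,x_1,\ldots,x_n=q$ with $x_k\sim_{\lambda_k}x_{k+1}$ for appropriate indices $\lambda_k$. For each single link, because $\sim_{\lambda_k}$ is a congruence we have $[\Opt(x_k)]_{\sim_{\lambda_k}}=[\Opt(x_{k+1})]_{\sim_{\lambda_k}}$; in particular, for every $r\in\Opt(x_k)$ there exists $s\in\Opt(x_{k+1})$ with $r\sim_{\lambda_k}s$, and hence $r\approx s$. This gives $[\Opt(x_k)]_\approx\subseteq[\Opt(x_{k+1})]_\approx$, with equality by symmetry. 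Composing these equalities along the chain yields $[\Opt(p)]_\approx=[\Opt(q)]_\approx$, so $\approx\in\Con(\mathsf{R})$. This is, in effect, the Lemma 2.4 of~\cite{EmulationalEquivalence} cited in Remark~\ref{rem:altBowtie}, iterated along the chain. By its minimality among equivalence relations containing $\bigcup_\lambda\sim_\lambda$, the relation $\approx$ is the join $\bigvee_\lambda\sim_\lambda$ in $\Con(\mathsf{R})$.

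For arbitrary meets, set $\bigwedge_\lambda\sim_\lambda:=\bigvee\mathcal{L}$, where $\mathcal{L}=\{\sim\in\Con(\mathsf{R}):\sim\subseteq\sim_\mu\text{ for every }\mu\in\Lambda\}$; note $\mathcal{L}$ is nonempty since $\Delta\in\mathcal{L}$. Each $\sim_\mu$ contains $\bigcup\mathcal{L}$ and is itself transitively closed, hence contains the transitive closure of $\bigcup\mathcal{L}$. Therefore $\bigvee\mathcal{L}\subseteq\sim_\mu$ for every $\mu$, and $\bigvee\mathcal{L}$ is plainly the greatest element of $\mathcal{L}$, i.e.\ the greatest lower bound of $\{\sim_\lambda\}$ in $\Con(\mathsf{R})$. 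This establishes that $\Con(\mathsf{R})$ is a complete lattice under inclusion.
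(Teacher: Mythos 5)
Your proof is correct, but it runs in the opposite direction from the paper's. The paper takes meets as primitive, asserting that $\bigwedge\mathcal{A}=\bigcap\mathcal{A}$, and then obtains joins as meets of the set of upper bounds, which is nonempty because it contains $\bowtie$. You take joins as primitive---the transitive closure of the union, with the congruence property checked link by link along a chain, which is exactly the iterated form of the cited Lemma~2.4---and then obtain meets as joins of the set of lower bounds, which is nonempty because it contains $\Delta$. Both dualities are standard, but your route buys more than elegance: the paper's claim that the set-theoretic intersection of congruences is again a congruence is false in general. On the rulegraph with arrows $p\to a$, $p\to b$, $q\to c$, $q\to d$ and $a,b,c,d$ terminal, the partitions $\{p,q\}\mid\{a,c\}\mid\{b,d\}$ and $\{p,q\}\mid\{a,d\}\mid\{b,c\}$ are both congruences, yet their intersection relates $p$ and $q$ while leaving every terminal position in a singleton class, so $[\Opt(p)]=\{\{a\},\{b\}\}\ne\{\{c\},\{d\}\}=[\Opt(q)]$ and the intersection fails the congruence condition (the true meet here is $\Delta$, strictly below the intersection). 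Your construction never needs intersections to be congruences: each $\sim_\mu$ contains the transitive closure of $\bigcup\mathcal{L}$ simply because $\sim_\mu$ is itself transitive, so your argument is sound where the paper's shortcut is not. The only cosmetic caveat is the empty family, since the transitive closure of an empty union is not reflexive; for $\Lambda=\emptyset$ one should take the join to be $\Delta$ directly, which you have implicitly covered by noting $\Delta\in\Con(\mathsf{R})$.
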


\begin{proof}
Let $\mathcal{A}\subseteq \Con(\mathsf{R})$.
It is easy to see that the meet of $\mathcal{A}$ is $\bigwedge\mathcal{A}:=\bigcap\mathcal{A}$. Let $\mathcal{B}:=\{C\in\Con(\mathsf{R}) \mid \bigcup\mathcal{A}\subseteq C\}$. Note that $\mathcal{B}$ is not empty since it contains $\bowtie$. It is easy to see that the join of $\mathcal{A}$ is $\bigvee\mathcal{A}:=\bigwedge\mathcal{B}$. 
\end{proof}

Note that $\mathsf{R}$ is simple if and only if $\Con(\mathsf{R})$ contains only one congruence relation $\Delta={\bowtie}$. This is analogous to the universal algebra result stating that a universal algebra is simple if and only if its only congruence relations are $\Delta$ and $\nabla$.

The following result can be thought of as the Fourth Isomorphism Theorem for rulegraphs. The proof is left to the reader.

\begin{proposition}\label{prop:fourth iso theorem}
Let $\mathsf{R}$ be a rulegraph and $D\in\Con(\mathsf{R})$. Then 
\[
C\mapsto C/D:\{C\in\Con(\mathsf{R})\mid D\subseteq C \}\to\Con(\mathsf{R}/D)
\]
is a lattice isomorphism.
\end{proposition}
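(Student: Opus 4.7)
The plan is to construct an explicit inverse to $\Phi: C \mapsto C/D$ and check that both maps preserve inclusion; since the domain and codomain are complete lattices, an order-preserving bijection with an order-preserving inverse is automatically a lattice isomorphism. Write $[p]_D$ for the $D$-class of $p\in V(\mathsf{R})$. The relation $C/D$ on $V(\mathsf{R}/D)$ is defined by $[p]_D\,(C/D)\,[q]_D$ if and only if $pCq$; this is well defined because $D\subseteq C$ and $C$ is transitive. To see $C/D$ is a congruence, unpack $[p]_D\,(C/D)\,[q]_D$ as $pCq$, which yields $[\Opt(p)]_C=[\Opt(q)]_C$. Combining this with $\Opt([r]_D)=\{[s]_D\mid s\in\Opt(r)\}$ and the observation that the $C/D$-class of $[r]_D$ equals $\{[s]_D\mid sCr\}$ (that is, the image of $[r]_C$ under the quotient map) yields $[\Opt([p]_D)]_{C/D}=[\Opt([q]_D)]_{C/D}$, as needed.

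For the inverse, given a congruence $\sim$ on $\mathsf{R}/D$, consider the composition $\pi_\sim\circ\pi_D\colon\mathsf{R}\to(\mathsf{R}/D)/{\sim}$ of quotient maps. Each factor is option preserving by Proposition~\ref{prop:quotientPreserves}, hence so is the composition, so by Proposition~\ref{prop:kernel is congruence relation} its kernel $\Psi(\sim)$ is a congruence relation on $\mathsf{R}$. One has $D\subseteq\Psi(\sim)$, since $D$-related positions already collapse under $\pi_D$. Direct unwinding shows that $\Phi$ and $\Psi$ are mutually inverse: $p\,\Psi(\sim)\,q$ holds iff $[p]_D\sim[q]_D$, giving $\Phi(\Psi(\sim))=\sim$; and $\Psi(\Phi(C))$ identifies $p$ with $q$ iff $[p]_D\,(C/D)\,[q]_D$ iff $pCq$, giving $\Psi(\Phi(C))=C$.

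Monotonicity of $\Phi$ is immediate from the definition, and monotonicity of $\Psi$ follows similarly: if $\sim_1\subseteq\sim_2$, then $[p]_D\sim_1[q]_D$ implies $[p]_D\sim_2[q]_D$, so $\Psi(\sim_1)\subseteq\Psi(\sim_2)$. Therefore $\Phi$ is an order-isomorphism between the two complete lattices in question, which automatically preserves arbitrary meets and joins, giving the desired lattice isomorphism. The main friction point is bookkeeping with the nested quotient $[[r]_D]_{C/D}$; once this is identified with the image of $[r]_C$ in $\mathsf{R}/D$, both the congruence property of $C/D$ and the inversion identities reduce to one-line rewrites.
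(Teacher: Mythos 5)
Your proof is correct. The paper explicitly leaves this proof to the reader, so there is nothing to compare against; your argument is the standard correspondence-theorem construction one would expect here, and all the pieces check out: $C/D$ is well defined and is a congruence on $\mathsf{R}/D$ via the identification of $[[r]_D]_{C/D}$ with the image of $[r]_C$, the kernel of the composed quotient maps gives the inverse, and an order isomorphism of lattices automatically preserves meets and joins.
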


\begin{example}
Figure~\ref{fig:4thIso} demonstrates the Fourth Isomorphism Theorem. 
Congruence relations are denoted by a list of the nontrivial congruence classes separated by bars. The congruence relation $D=56$ identifies positions $5$ and $6$. The maximum congruence relation is ${\bowtie} = 12|34|56$.
\end{example}

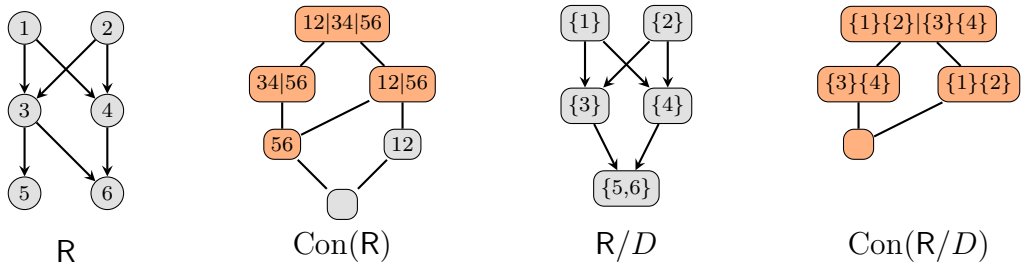
\begin{figure}
\begin{tikzpicture}[scale=1.1]
\node (02) [small vert] at (0,2) {$\scriptstyle 1$};
\node (12) [small vert] at (1,2) {$\scriptstyle  2$};
\node (01) [small vert] at (0,1) {$\scriptstyle  3$};
\node (11) [small vert] at (1,1) {$\scriptstyle  4$};
\node (00) [small vert] at (0,0) {$\scriptstyle  5$};
\node (10) [small vert] at (1,0) {$\scriptstyle  6$};
\path [d] (02) to (01);
\path [d] (02) to (11);
\path [d] (12) to (01);
\path [d] (12) to (11);
\path [d] (01) to (00);
\path [d] (01) to (10);
\path [d] (11) to (10);
\node  at (.5,-.7) {$\mathsf{R}$};
\end{tikzpicture}
\hfil
\begin{tikzpicture}[scale=.8]
\node (c12-34-56) [rect vert, fill=orange!50] at (0,3) {$\scriptstyle 12|34|56$};
\node (c34-56) [rect vert, fill=orange!50] at (-1,2) {$\scriptstyle 34|56$};
\node (c12-56) [rect vert, fill=orange!50] at (1,2) {$\scriptstyle  12|56$};
\node (c12) [rect vert] at (1,1) {$\scriptstyle  12$};
\node (c56) [rect vert, fill=orange!50] at (-1,1) {$\scriptstyle  56$};
\node (id) [rect vert] at (0,0) {$\scriptstyle  $};
\path [d,-] (c12-34-56) to (c34-56);
\path [d,-] (c12-34-56) to (c12-56);
\path [d,-] (c34-56) to (c56);
\path [d,-] (c12-56) to (c12);
\path [d,-] (c12-56) to (c56);
\path [d,-] (c56) to (id);
\path [d,-] (c12) to (id);
\node  at (0,-.7) {$\Con(\mathsf{R})$};
\end{tikzpicture}
\hfil
\begin{tikzpicture}[scale=1.1]
\node (02) [rect vert] at (0,2) {$\scriptstyle \{1\}$};
\node (12) [rect vert] at (1,2) {$\scriptstyle  \{2\}$};
\node (01) [rect vert] at (0,1) {$\scriptstyle  \{3\}$};
\node (11) [rect vert] at (1,1) {$\scriptstyle  \{4\}$};
\node (10) [rect vert] at (.5,0) {$\scriptstyle  \{5,6\}$};
\path [d] (02) to (01);
\path [d] (02) to (11);
\path [d] (12) to (01);
\path [d] (12) to (11);
\path [d] (01) to (10);
\path [d] (11) to (10);
\node  at (.5,-.7) {$\mathsf{R}/D$};
\end{tikzpicture}
\hfil
\begin{tikzpicture}[scale=.8]
\node (c12-34-56) [rect vert,fill=orange!50] at (0,3) {$\scriptstyle \{1\}\{2\}|\{3\}\{4\}$};
\node (c34-56) [rect vert,fill=orange!50] at (-1,2) {$\scriptstyle \{3\}\{4\}$};
\node (c12-56) [rect vert,fill=orange!50] at (1,2) {$\scriptstyle  \{1\}\{2\}$};
\node (c56) [rect vert,fill=orange!50] at (-1,1) {$\scriptstyle  $};
\path [d,-] (c12-34-56) to (c34-56);
\path [d,-] (c12-34-56) to (c12-56);
\path [d,-] (c34-56) to (c56);
\path [d,-] (c12-56) to (c56);
\node  at (0,-.7) {$\Con(\mathsf{R}/D)$};
\end{tikzpicture}

\caption{
\label{fig:4thIso}
The lattice of congruence relations for a rulegraph $\mathsf{R}$ and its quotient rulegraph $\mathsf{R}/\mathsf{D}$. 
}
\end{figure}

\section{Enumerating simple rulegraphs and gamegraphs}\label{sec:enumeration}

In this section, we explore two possible enumerations of simple gamegraphs and rulegraphs: first by formal birthday, and then by number of positions.

\subsection{Counting by formal birthday}

The \emph{rank} of a set is the least ordinal greater than the rank of each element of the set. An impartial game, as defined in \cite{albert2007lessons,SiegelBook}, with formal birthday $n$ is a set with rank $n$. So the number of impartial games with formal birthday at most $n$ is ${^{n}2}:=2^{2^{\cdots 2}}$ with $n$ copies of 2, as described in \cite[A014221]{oeis}.  Note that ${^{0}2=1}$ and ${^{n+1}2}=2^{(^n2)}$. Since the correspondence described in Remark~\ref{rem:slim equal traditional} is a formal birthday preserving bijection, we have the following result. 

\begin{proposition}\label{prop:number of rulegraphs}
The number of simple gamegraphs up to isomorphism with formal birthday at most $n$ is ${^{n}2}$.
\end{proposition}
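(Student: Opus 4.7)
The plan is to reduce the count of simple gamegraphs with formal birthday at most $n$ to the count of traditional impartial games of rank at most $n$, using the correspondence from Remark~\ref{rem:slim equal traditional}. Specifically, I would verify that this correspondence descends to a bijection between isomorphism classes of simple gamegraphs and traditional impartial games, and that it preserves formal birthday. Once those two things are established, the proposition reduces to counting traditional impartial games of rank at most $n$, which is a direct transfinite induction.

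For the correspondence, I would argue in two directions. Given a simple gamegraph $\mathsf{G}$, the recursive replacement $p\mapsto\{[q]:q\in\Opt(p)\}$ described in Remark~\ref{rem:slim equal traditional} is well-defined because simplicity implies by Proposition~\ref{prop:injective} that the $\Opt$ map is injective, so distinct positions are sent to distinct sets; isomorphic gamegraphs manifestly produce the same set. Conversely, given a traditional impartial game $G$, building the gamegraph whose positions are the $\in$-descendants of $G$ and whose arrows are instances of $\in$ yields a simple gamegraph (again by Proposition~\ref{prop:injective}, since sets are extensional), and different games $G$ yield non-isomorphic gamegraphs because an isomorphism would have to match the root sets. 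The two constructions are inverses up to isomorphism. Preservation of formal birthday is immediate from comparing the two recursions: $\fbd(p)=\sup\{\fbd(q)+1\mid q\in\Opt(p)\}$ matches the definition of the rank of the set of options.

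Let $a_n$ denote the number of traditional impartial games of rank at most $n$. A game of rank at most $n+1$ is exactly a subset of the class of games of rank at most $n$, and every such subset gives a game of rank at most $n+1$. Hence $a_{n+1}=2^{a_n}$, and $a_0=1$ because the only game of rank $0$ is $\emptyset$. By induction $a_n={^n 2}$, and combining with the bijection above yields the claim.

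The main obstacle is purely bookkeeping around the correspondence in Remark~\ref{rem:slim equal traditional}: showing the construction is well-defined up to isomorphism in both directions and is formal-birthday preserving. Since Proposition~\ref{prop:injective} already identifies simplicity with extensionality of the option map, the rest of the argument is a transparent induction and no additional obstacle arises.
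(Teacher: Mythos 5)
Your proposal is correct and follows essentially the same route as the paper: the paper likewise invokes the correspondence of Remark~\ref{rem:slim equal traditional} as a formal-birthday-preserving bijection onto traditional impartial games and then counts games of rank at most $n$ via the recursion ${^{n+1}2}=2^{({^{n}2})}$. You simply spell out the verification of the bijection in more detail than the paper does.
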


\begin{example}
Figure \ref{fig:small game graphs} shows the 16 simple gamegraphs up to isomorphism with formal birthday at most 3.
\end{example}

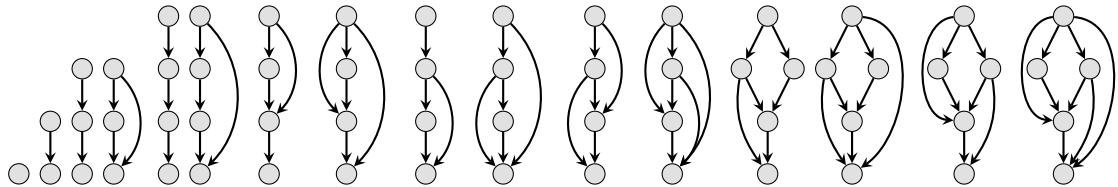
\begin{figure}[ht]
\centering
\begin{tikzpicture}[scale=.7]
\node (a) [small vert] at (0,1) {};
\end{tikzpicture}
\begin{tikzpicture}[scale=.7]
\node (a) [small vert] at (0,1) {};
\node (b) [small vert] at (0,0) {};
\path[d] (a) to (b);
\end{tikzpicture}
\begin{tikzpicture}[scale=.7]
\node (a) [small vert] at (0,2) {};
\node (b) [small vert] at (0,1) {};
\node (c) [small vert] at (0,0) {};
\path[d] (a) to (b);
\path[d] (b) to (c);
\end{tikzpicture}
\begin{tikzpicture}[scale=.7]
\node (a) [small vert] at (0,2) {};
\node (b) [small vert] at (0,1) {};
\node (c) [small vert] at (0,0) {};
\path[d] (a) to (b);
\path[d] (b) to (c);
\path[d,out=-45,in=45] (a) to (c);
\end{tikzpicture}
\begin{tikzpicture}[scale=.7]
\node (a) [small vert] at (0,3) {};
\node (b) [small vert] at (0,2) {};
\node (c) [small vert] at (0,1) {};
\node (d) [small vert] at (0,0) {};
\path[d] (a) to (b);
\path[d] (b) to (c);
\path[d] (c) to (d);
\end{tikzpicture}
\begin{tikzpicture}[scale=.7]
\node (a) [small vert] at (0,3) {};
\node (b) [small vert] at (0,2) {};
\node (c) [small vert] at (0,1) {};
\node (d) [small vert] at (0,0) {};
\path[d] (a) to (b);
\path[d] (b) to (c);
\path[d] (c) to (d);
\path[d,out=-45,in=45] (a) to (d);
\end{tikzpicture}
\begin{tikzpicture}[scale=.7]
\node (a) [small vert] at (0,3) {};
\node (b) [small vert] at (0,2) {};
\node (c) [small vert] at (0,1) {};
\node (d) [small vert] at (0,0) {};
\path[d] (a) to (b);
\path[d] (b) to (c);
\path[d] (c) to (d);
\path[d,out=-45,in=45] (a) to (c);
\end{tikzpicture}
\begin{tikzpicture}[scale=.7]
\node (a) [small vert] at (0,3) {};
\node (b) [small vert] at (0,2) {};
\node (c) [small vert] at (0,1) {};
\node (d) [small vert] at (0,0) {};
\path[d] (a) to (b);
\path[d] (b) to (c);
\path[d] (c) to (d);
\path[d,out=-45,in=45] (a) to (d);
\path[d,out=-135,in=135] (a) to (c);
\end{tikzpicture} \vphantom{$\int_\int$} 
\begin{tikzpicture}[scale=.7]
\node (a) [small vert] at (0,3) {};
\node (b) [small vert] at (0,2) {};
\node (c) [small vert] at (0,1) {};
\node (d) [small vert] at (0,0) {};
\path[d] (a) to (b);
\path[d] (b) to (c);
\path[d] (c) to (d);
\path[d,out=-45,in=45] (b) to (d);
\end{tikzpicture}
\begin{tikzpicture}[scale=.7]
\node (a) [small vert] at (0,3) {};
\node (b) [small vert] at (0,2) {};
\node (c) [small vert] at (0,1) {};
\node (d) [small vert] at (0,0) {};
\path[d] (a) to (b);
\path[d] (b) to (c);
\path[d] (c) to (d);
\path[d,out=-135,in=135] (b) to (d);
\path[d,out=-45,in=45] (a) to (d);
\end{tikzpicture}
\begin{tikzpicture}[scale=.7]
\node (a) [small vert] at (0,3) {};
\node (b) [small vert] at (0,2) {};
\node (c) [small vert] at (0,1) {};
\node (d) [small vert] at (0,0) {};
\path[d] (a) to (b);
\path[d] (b) to (c);
\path[d] (c) to (d);
\path[d,out=-135,in=135] (b) to (d);
\path[d,out=-45,in=45] (a) to (c);
\end{tikzpicture}
\begin{tikzpicture}[scale=.7]
\node (a) [small vert] at (0,3) {};
\node (b) [small vert] at (0,2) {};
\node (c) [small vert] at (0,1) {};
\node (d) [small vert] at (0,0) {};
\path[d] (a) to (b);
\path[d] (b) to (c);
\path[d] (c) to (d);
\path[d,out=-135,in=135] (a) to (c);
\path[d,out=-45,in=45] (a) to (d);
\path[d,out=-45,in=45] (b) to (d);
\end{tikzpicture}
\begin{tikzpicture}[scale=.7]
\node (a) [small vert] at (.5,3) {};
\node (b1) [small vert] at (0,2) {};
\node (b2) [small vert] at (1,2) {};
\node (c) [small vert] at (.5,1) {};
\node (d) [small vert] at (.5,0) {};
\path [d] (a) to (b1);
\path [d] (a) to (b2);
\path [d] (b1) to (c);
\path [d] (b2) to (c);
\path [d,out=-100,in=125] (b1) to (d);
\path [d] (c) to (d);
\end{tikzpicture}
\begin{tikzpicture}[scale=.7]
\node (a) [small vert] at (.5,3) {};
\node (b1) [small vert] at (0,2) {};
\node (b2) [small vert] at (1,2) {};
\node (c) [small vert] at (.5,1) {};
\node (d) [small vert] at (.5,0) {};
\path [d] (a) to (b1);
\path [d] (a) to (b2);
\path [d] (b1) to (c);
\path [d] (b2) to (c);
\path [d,out=-100,in=125] (b1) to (d);
\path [d,out=-5,in=35] (a) to (d);
\path [d] (c) to (d);
\end{tikzpicture}
\hspace{-1em}
\begin{tikzpicture}[scale=.7]
\node (a) [small vert] at (.5,3) {};
\node (b1) [small vert] at (0,2) {};
\node (b2) [small vert] at (1,2) {};
\node (c) [small vert] at (.5,1) {};
\node (d) [small vert] at (.5,0) {};
\path [d] (a) to (b1);
\path [d] (a) to (b2);
\path [d] (b1) to (c);
\path [d] (b2) to (c);
\path [d,out=-175,in=175] (a) to (c);
\path [d,out=-80,in=55] (b2) to (d);
\path [d] (c) to (d);
\end{tikzpicture}
\begin{tikzpicture}[scale=.7]
\node (a) [small vert] at (.5,3) {};
\node (b1) [small vert] at (0,2) {};
\node (b2) [small vert] at (1,2) {};
\node (c) [small vert] at (.5,1) {};
\node (d) [small vert] at (.5,0) {};
\path [d] (a) to (b1);
\path [d] (a) to (b2);
\path [d] (b1) to (c);
\path [d] (b2) to (c);
\path [d,out=-80,in=55] (b2) to (d);
\path [d,out=-5,in=35] (a) to (d);
\path [d,out=-175,in=175] (a) to (c);
\path [d] (c) to (d);
\end{tikzpicture}

\caption{Simple gamegraphs up to isomorphism with formal birthday at most $3$.
\label{fig:small game graphs}}
\end{figure}

Our next goal is to determine the number $x_d$ of simple rulegraphs with formal birthday $d$ up to isomorphism.

\begin{example}
\label{eg:small_ruleset}
Figure \ref{fig:small ruleset graphs 1} shows the simple rulegraphs up to isomorphism with formal birthday at most 2. Note that $x_0=1$, $x_1=1$, and $x_2=3$.
\end{example}

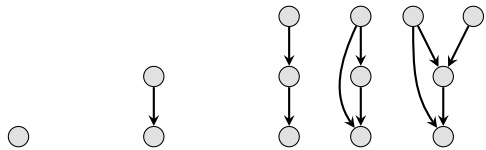
\begin{figure}[ht]
\centering
\begin{tikzpicture}[scale=.8]
\node (d1) [small vert] at (.5,0) {};
\end{tikzpicture}
$\qquad\quad$
\begin{tikzpicture}[scale=.8]
\node (c) [small vert] at (.5,1) {};
\node (d1) [small vert] at (.5,0) {};
\path [d] (c) to (d1);
\end{tikzpicture}
$\qquad\quad$
\begin{tikzpicture}[scale=.8]
\node (b1) [small vert] at (0.5,2) {};
\node (c) [small vert] at (.5,1) {};
\node (d1) [small vert] at (.5,0) {};
\path [d] (b1) to (c);
\path [d] (c) to (d1);
\end{tikzpicture}
$\ $
\begin{tikzpicture}[scale=.8]
\node (b1) [small vert] at (0.5,2) {};
\node (c) [small vert] at (.5,1) {};
\node (d1) [small vert] at (.5,0) {};
\path [d] (b1) to (c);
\path [d,out=-115,in=125] (b1) to (d1);
\path [d] (c) to (d1);
\end{tikzpicture}
$\ $
\begin{tikzpicture}[scale=.8]
\node (b1) [small vert] at (0,2) {};
\node (b2) [small vert] at (1,2) {};
\node (c) [small vert] at (.5,1) {};
\node (d1) [small vert] at (.5,0) {};
\path [d] (b1) to (c);
\path [d] (b2) to (c);
\path [d,out=-90,in=125] (b1) to (d1);
\path [d] (c) to (d1);
\end{tikzpicture}

\caption{
\label{fig:small ruleset graphs 1}
Simple rulegraphs up to isomorphism with formal birthday at most $2$. 
}
\end{figure}

Consider a rulegraph $\mathsf{R}$ with formal birthday $d=\fbd(\mathsf{R})$. A position $p$ of $\mathsf{R}$ is called a \emph{top} position if $\fbd(p)=d$. Otherwise $p$ is called a \emph{rest} position. The \emph{index} of  $\mathsf{R}$ is $(t,u)$, where $t$ is the number of top positions and $u$ is the number of rest positions in $\mathsf{R}$.

A simple rulegraph $\mathsf{R}$ with formal birthday $d+1$ can be built from a simple rulegraph $\mathsf{S}$ with formal birthday $d$ by adding some new positions that have formal birthday $d+1$. To ensure this, the option sets of these new positions must contain at least one top position of $\mathsf{S}$. The process is demonstrated in Example~\ref{ega:lev2}. To count the number of ways this can be done, we will partition the set of simple rulegraphs into classes based on their indices. 

Let $x_{d,t,u}$ be the number of simple rulegraphs with formal birthday $d$ and index $(t,u)$ up to isomorphism. Define 
\[
I_d:=\{(t,u) \mid x_{d,t,u}>0\}
\]
to be the set of possible indices for simple rulegraphs with formal birthday $d$. It is clear that 
\[
x_d=\sum_{(t,u)\in I_d} x_{d,t,u}.
\]

\begin{example}
\label{eg:x-values}
It is easy to see using Figure~\ref{fig:small ruleset graphs 1} that $I_0=\{(1,0)\}$, $I_1=\{(1,1)\}$, and $I_2=\{(1,2),(2,2)\}$. We have $x_{0,1,0}=1$, $x_{1,1,1}=1$, $x_{2,1,2}=2$, and $x_{2,2,2}=1$. 
\end{example}

Define $\tilde{\mathsf{R}}$ to be the rulegraph obtained by removing all top positions from $\mathsf{R}$.

\begin{proposition}
\label{prop:Indices}
If $d\ge 0$, then 
\[
I_{d+1}=\{(T,t+u)\mid (t,u)\in I_d, 1\le T\le 2^{t+u}-2^u\}.
\]
\end{proposition}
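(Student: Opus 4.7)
The plan is to prove both inclusions of the claimed equality. For the forward inclusion, I would start with an arbitrary simple rulegraph $\mathsf{R}$ of formal birthday $d+1$ and analyze $\tilde{\mathsf{R}}$. First, $\tilde{\mathsf{R}}$ is itself a simple rulegraph: it inherits the absence of infinite walks, and Proposition~\ref{prop:injective} applies because the option set of any position $p$ of $\mathsf{R}$ with $\fbd(p)\le d$ lies entirely in $V(\tilde{\mathsf{R}})$, so the restriction of the injective map $\Opt$ remains injective. Second, $\fbd(\tilde{\mathsf{R}})=d$: it is at most $d$ by construction, and at least $d$ because any top position of $\mathsf{R}$ must have an option of formal birthday $d$, which survives in $\tilde{\mathsf{R}}$. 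Letting $(t,u)$ denote the index of $\tilde{\mathsf{R}}$, one has $(t,u)\in I_d$, and the index of $\mathsf{R}$ is $(T,t+u)$ for some $T\ge 1$.

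Next I would bound $T$ from above. For each top position $p$ of $\mathsf{R}$, the set $\Opt(p)$ is a subset of $V(\tilde{\mathsf{R}})$ that must contain at least one top position of $\tilde{\mathsf{R}}$, because $\fbd(p)=d+1$ forces some option of formal birthday $d$. There are exactly $2^{t+u}-2^u$ such subsets, obtained by subtracting the $2^u$ subsets contained in the rest part of $\tilde{\mathsf{R}}$ from the $2^{t+u}$ total. Simplicity of $\mathsf{R}$ via Proposition~\ref{prop:injective} forces distinct top positions of $\mathsf{R}$ to have distinct option sets, yielding $T\le 2^{t+u}-2^u$.

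For the reverse inclusion, given $(t,u)\in I_d$ and $1\le T\le 2^{t+u}-2^u$, I would pick a simple rulegraph $\mathsf{S}$ with index $(t,u)$ and choose $T$ distinct subsets $A_1,\ldots,A_T\subseteq V(\mathsf{S})$ each containing at least one top position of $\mathsf{S}$. Define $\mathsf{R}$ by adjoining new positions $p_1,\ldots,p_T$ with $\Opt(p_i):=A_i$. The new positions have no in-arrows and their out-arrows land in $\mathsf{S}$, so any directed walk either stays in $\mathsf{S}$ or starts at some $p_i$ and continues in $\mathsf{S}$; thus $\mathsf{R}$ has no infinite walk. Each $p_i$ has $\fbd(p_i)=d+1$ since $A_i$ contains a top of $\mathsf{S}$, so $\mathsf{R}$ has formal birthday $d+1$ with index $(T,t+u)$.

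The main point requiring care is simplicity of the constructed $\mathsf{R}$. By Proposition~\ref{prop:injective} it suffices that $\Opt$ is injective on $V(\mathsf{R})$. Injectivity on $V(\mathsf{S})$ is given by hypothesis, the $A_i$ are pairwise distinct by construction, and the option set of any position of $\mathsf{S}$ cannot equal any $A_i$ because options of positions of $\mathsf{S}$ have formal birthday strictly less than $d$, so no option set occurring in $\mathsf{S}$ contains a top position of $\mathsf{S}$, whereas each $A_i$ does. This observation — that realized option sets in $\mathsf{S}$ avoid top positions — is the same combinatorial fact that makes the bound $2^{t+u}-2^u$ tight, and it simultaneously closes both inclusions.
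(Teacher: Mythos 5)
Your proof is correct and follows the same strategy as the paper's: pass to $\tilde{\mathsf{R}}$, observe that the option sets of the new top positions must be distinct subsets of $V(\tilde{\mathsf{R}})$ containing at least one top position, and count those subsets as $2^{t+u}-2^u$. You are in fact more thorough than the paper, which explicitly argues only the forward inclusion and leaves the reverse construction --- and the verification via injectivity of $\Opt$ that the resulting rulegraph is simple --- to the surrounding discussion and examples.
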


\begin{proof}
A pair $(T,U)$ is in $I_{d+1}$ exactly when there is a simple rulegraph $\mathsf{R}$ with formal birthday $d+1$ and index $(T,U)$. Let the simple rulegraph $\tilde{\mathsf{R}}$ have index $(t,u)$. Note that $t+u=U$ and $(t,u)\in I_{d}$. Let $\mathcal{O}$ be the family of subsets of $V(\tilde{\mathsf{R}})$ that contain at least one top position. It is clear that $|\mathcal{O}|=2^{t+u}-2^u$. The option sets of the $T$ top positions of $\mathsf{R}$  must be distinct elements of $\mathcal{O}$. So we must have $1\le T\le |\mathcal{O}|$. 
\end{proof}

\begin{example}
\label{ega:lev2}
Consider the simple rulegraph $\mathsf{R}$ consisting of a single arrow $(b,a)$ with formal birthday 1 and index $(t,u)=(1,1)$. 
The number of sets of positions that contain at least one top position is $2^{t+u}-2^u=2^2-2^1=2$. These subsets are in $\mathcal{O}:=\{\{b\},\{a,b\}\}$. The third and fourth rulegraphs of Figure~\ref{fig:small ruleset graphs 1} are built from $\mathsf{R}$ by adding $T=1$ new position $p$ satisfying $\Opt(p)\in\mathcal{O}$. This can be done in $\binom{|\mathcal{O}|}{T}=\binom{2}{1}=2$ ways. The fifth rulegraph of Figure~\ref{fig:small ruleset graphs 1} is also built from $\mathsf{R}$ by adding $T=2$ new positions $p$ and $q$ satisfying $\Opt(p),\Opt(q)\in\mathcal{O}$ and $\Opt(p)\ne\Opt(q)$. This can be done in $\binom{|\mathcal{O}|}{T}=\binom{2}{2}=1$ way.  
\end{example}



Define  $I_{d}^U:=\{(t,u)\in I_{d} \mid t+u=U\}$.

\begin{proposition}
If $d\ge 0$ and $(T,U)\in I_{d+1}$, then 
\[
x_{d+1,T,U}=\sum_{(t,u)\in I_{d}^U} \binom{2^{t+u}-2^u}{T} x_{d,t,u}.
\]
\end{proposition}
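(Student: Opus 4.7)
The plan is to establish a bijection between isomorphism classes of simple rulegraphs $\mathsf{R}$ with formal birthday $d+1$ and index $(T,U)$, and pairs consisting of (i) an isomorphism class of simple rulegraph $\tilde{\mathsf{R}}$ with formal birthday $d$ and some index $(t,u)\in I_d^U$, together with (ii) a choice of $T$ distinct subsets of $V(\tilde{\mathsf{R}})$, each containing at least one top position of $\tilde{\mathsf{R}}$. Since by the proof of Proposition~\ref{prop:Indices} there are $2^{t+u}-2^u$ such subsets, the number of such $T$-subsets is $\binom{2^{t+u}-2^u}{T}$, and summing over the $x_{d,t,u}$ iso classes on each $(t,u)\in I_d^U$ will yield the formula.

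Starting with such an $\mathsf{R}$, I would first verify that $\tilde{\mathsf{R}}$ is well-defined, simple, and has formal birthday $d$. Any rest position of $\mathsf{R}$ has formal birthday strictly less than $d+1$, and its options have strictly smaller formal birthday still, so options of rest positions are themselves rest positions; thus $\tilde{\mathsf{R}}$ is a subdigraph of $\mathsf{R}$ whose $\Opt$ map is the restriction of $\Opt_\mathsf{R}$, still injective, so $\tilde{\mathsf{R}}$ is simple by Proposition~\ref{prop:injective}. Since each of the $T$ top positions of $\mathsf{R}$ has formal birthday exactly $d+1$, it must have an option of formal birthday exactly $d$ (a top of $\tilde{\mathsf{R}}$), so $\tilde{\mathsf{R}}$ indeed has formal birthday $d$. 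The option sets of the top positions of $\mathsf{R}$ are then $T$ distinct subsets of $V(\tilde{\mathsf{R}})$, each containing a top of $\tilde{\mathsf{R}}$, where distinctness follows from simplicity of $\mathsf{R}$. Writing the index of $\tilde{\mathsf{R}}$ as $(t,u)$, we have $t+u=U$, so $(t,u)\in I_d^U$. Conversely, given any such $\tilde{\mathsf{R}}$ and $T$-subset of its admissible option sets, gluing back $T$ new top positions with the chosen option sets produces a rulegraph with $\fbd=d+1$ and index $(T,U)$; it is simple because new top option sets each contain a top of $\tilde{\mathsf{R}}$ while option sets of positions already in $\tilde{\mathsf{R}}$ do not, so no collision of option sets occurs.

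The main obstacle is that this correspondence must be shown to respect isomorphism classes bijectively, and this requires a \emph{rigidity lemma}: every simple rulegraph has trivial automorphism group. I would prove this by transfinite structural induction: if $\alpha$ is an automorphism of a simple rulegraph and $\alpha(p)\ne p$ for some $p$, choose such $p$ with minimum formal birthday, so that $\alpha$ fixes every element of $\Opt(p)$ by minimality; then $\Opt(\alpha(p))=\alpha(\Opt(p))=\Opt(p)$, and the injectivity of $\Opt$ from Proposition~\ref{prop:injective} forces $\alpha(p)=p$, a contradiction. Granted rigidity, any isomorphism $\mathsf{R}_1\to \mathsf{R}_2$ between two candidate extensions restricts to \emph{the} unique isomorphism $\tilde{\mathsf{R}}_1\to\tilde{\mathsf{R}}_2$, under which the two collections of top option sets must correspond exactly; conversely, any such matching of $T$-subsets extends uniquely to an isomorphism $\mathsf{R}_1\to\mathsf{R}_2$. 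Thus the counts on the two sides of the claimed identity agree.
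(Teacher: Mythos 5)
Your proof is correct and follows the same decomposition as the paper's: strip off the top positions to get $\tilde{\mathsf{R}}$, observe that its index $(t,u)$ lies in $I_d^U$, and count the ways to reattach $T$ new top positions whose option sets are distinct members of the family $\mathcal{O}$ of subsets of $V(\tilde{\mathsf{R}})$ meeting the top level, of which there are $2^{t+u}-2^u$. The substantive difference is that the paper's proof is terse and silently identifies ``number of choices of $T$-subsets of $\mathcal{O}$'' with ``number of resulting rulegraphs up to isomorphism,'' whereas you explicitly justify this identification via a rigidity lemma (every simple rulegraph has trivial automorphism group, proved by taking a non-fixed position of minimal formal birthday and invoking injectivity of $\Opt$ from Proposition~\ref{prop:injective}). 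This is a genuine gap-filler rather than a detour: without rigidity, an automorphism of $\tilde{\mathsf{R}}$ could in principle carry one $T$-subset of $\mathcal{O}$ to another and collapse two choices into one isomorphism class, so the binomial coefficient would overcount. Your rigidity argument is essentially the same induction the paper later uses to prove Proposition~\ref{prop:difNonIso}, so nothing new is needed beyond what the paper already has --- but placing it here, where the enumeration actually depends on it, makes the count airtight. Your verification that the reattached graph is simple (new option sets meet the top level of $\tilde{\mathsf{R}}$, old ones cannot, so no collisions) is also a detail the paper leaves to the reader, and it is correct.
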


\begin{proof}
If a simple rulegraph $\mathsf{R}$ has formal birthday $d+1$ and index $(T,U)$, then $\tilde{\mathsf{R}}$ has formal birthday $d$ and index $(t,u)$ for some $(t,u)\in I_{d}^U$. 

Now consider a simple rulegraph $\mathsf{S}$ with formal birthday $d$ and index $(t,u)\in I_{d}^U$. There are $x_{d,t,u}$ such simple rulegraphs. The family $\mathcal{O}$ of sets containing positions of $\mathsf{S}$ that have at least one top position has $2^{t+u}-2^u$ elements.
The number of simple rulegraphs $\mathsf{R}$ with formal birthday $d+1$ that we can build from $\mathsf{S}$ by adding $T$ new top positions with formal birthday $d+1$ is $\binom{{2^{t+u}-2^u} }{T}$ since each of the $T$ top positions of $\mathsf{R}$ must have a different option set that is an element of $\mathcal{O}$. 
\end{proof}

\begin{remark}
\label{rem:alt xd}
The computation of $x_d$ can be sped up using the alternate formula
\[
x_d=\sum_{(t,u)\in I_{d-1}} 
\big(2^{2^{t+u}-2^u}-1\big)x_{d-1,t,u},
\]
which has fewer terms.
The proof is similar to that of the previous proposition. If  $\mathsf{R}$ is a simple rulegraph with  formal birthday $d$, then  $\tilde{\mathsf{R}}$ has formal birthday $d-1$ and index $(t,u)$ for some $(t,u)\in I_{d-1}$. For any simple rulegraph $\mathsf{S}$ with formal birthday $d-1$ and index $(t,u)\in I_{d-1}$, we can build a simple rulegraph with formal birthday $d$ by adding an arbitrary positive number of top positions, as long as they all have mutually different option sets that are all  elements of $\mathcal O$ (defined in the previous proof). Since the number of nonempty subsets of $\mathcal O$ equals $2^{2^{t+u}-2^u}-1$, this is the number of rulegraphs that can be built from $\mathsf{S}$. The formula now follows since there are $x_{d-1,t,u}$ choices for the rulegraph $\mathsf{S}$.
\end{remark}

\begin{example}
We computed the values of $x_d$ for $d\in\{0,1,2\}$ in Example~\ref{eg:small_ruleset}. 
Let us calculate $x_3$. 
We saw in Example~\ref{eg:x-values} that $I_2=\{(1,2),(2,2)\}$, and $x_{2,1,2}=2$, $x_{2,2,2}=1$.
Remark \ref{rem:alt xd} implies that
\[
x_3=\big(2^{2^{3}-2^2}-1\big)x_{2,1,2}+\big(2^{2^{4}-2^2}-1\big)x_{2,2,2}=15\cdot 2+4095\cdot 1=4125.
\]
The sequence $(x_d)$ grows fast. The same formula also allows us to calculate the next term, which has 19724 digits. Further terms are computationally impractical. The sequence does not appear in~\cite{oeis}.
\end{example}

\begin{proposition}
\label{prop:difNonIso}
If the gamegraphs $\mathsf{R}_p$ and $\mathsf{R}_q$ are isomorphic  for some positions $p$ and $q$ of a simple rulegraph $\mathsf{R}$, then $\mathsf{R}_p=\mathsf{R}_q$.
\end{proposition}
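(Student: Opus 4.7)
The plan is to prove this by transfinite induction on the formal birthday of $p$. Let $\varphi \colon \mathsf{R}_p \to \mathsf{R}_q$ be an isomorphism of gamegraphs; by Proposition~\ref{prop:categoryIso} this is a bijective option preserving map, and by Proposition~\ref{prop:sourceIffSurj} it must take the source of $\mathsf{R}_p$ to the source of $\mathsf{R}_q$, so $\varphi(p)=q$. I aim to show $p=q$, from which $\mathsf{R}_p=\mathsf{R}_q$ follows immediately from the definition of the induced subgamegraph.

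For the base case, suppose $p$ is terminal, i.e.\ $\fbd(p)=0$. By Proposition~\ref{prop:nim preserving} applied to the valuation $\fbd$, we have $\fbd(q)=\fbd(p)=0$, so $q$ is also terminal. Then $\Opt(p)=\emptyset=\Opt(q)$, and since $\mathsf{R}$ is simple, Proposition~\ref{prop:injective} says $\Opt$ is injective on $V(\mathsf{R})$, forcing $p=q$.

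For the inductive step, assume the statement holds for all positions of formal birthday less than $\xi$, and let $\fbd(p)=\xi$. For each $r\in\Opt(p)$, set $s:=\varphi(r)$; since $\varphi$ is option preserving we have $s\in\Opt(q)$. Because $\varphi$ is a digraph isomorphism, it restricts to a bijection between the subpositions of $r$ and the subpositions of $s$ (directed walks go to directed walks under both $\varphi$ and $\varphi^{-1}$), and the restriction is still option preserving, hence an isomorphism $\mathsf{R}_r\to\mathsf{R}_s$. Since $\fbd(r)<\xi$, the inductive hypothesis yields $\mathsf{R}_r=\mathsf{R}_s$; in particular $r=s=\varphi(r)$. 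Thus $\varphi$ fixes each element of $\Opt(p)$ pointwise, so
\[
\Opt(p)=\varphi(\Opt(p))=\Opt(\varphi(p))=\Opt(q).
\]
Invoking Proposition~\ref{prop:injective} once more, the simplicity of $\mathsf{R}$ gives $p=q$, and hence $\mathsf{R}_p=\mathsf{R}_q$.

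The only step requiring care is verifying that $\varphi$ really does restrict to an isomorphism $\mathsf{R}_r\to\mathsf{R}_{\varphi(r)}$, so that the inductive hypothesis applies; this is a direct consequence of $\varphi$ being a digraph isomorphism and the fact that subpositions are exactly the vertices reachable by directed walks, which are preserved in both directions. Everything else is a clean structural induction, and the simplicity of $\mathsf{R}$ enters precisely at the two places where we apply Proposition~\ref{prop:injective} to conclude equality from coincidence of option sets.
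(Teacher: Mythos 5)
Your proof is correct and rests on the same two pillars as the paper's: option preserving maps respect formal birthday, and simplicity makes $\Opt$ injective, so the "first" position not fixed by the isomorphism would share its option set with its image. The paper packages this as a minimal-counterexample argument (take the least formal birthday of a position $r$ with $\alpha(r)\ne r$ and derive the contradiction directly), whereas you run a transfinite induction that restricts $\varphi$ to the sub-gamegraphs $\mathsf{R}_r$; the two are interchangeable, with the paper's version avoiding the need to verify the restriction step.
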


\begin{proof}
For a contradiction suppose that $p\ne q$. Let $\alpha:\mathsf{R}_p\to\mathsf{R}_q$ be an isomorphism. Then $\alpha$ is also option preserving and $\alpha(p)=q$. The unique terminal position $t$ of $\mathsf{R}$ satisfies $\alpha(t)=t$. Let $d$ be the smallest formal birthday for which there is a position $r$ such that $\fbd(r)=d$ and $\alpha(r)\ne r$. Note that $\fbd(s)<d$ for all $s\in\Opt(r)$. Hence $\Opt(\alpha(r))=\alpha(\Opt(r))=\Opt(r)$, which is a contradiction since $\mathsf{R}$ is simple.
\end{proof}

\begin{example}
\label{exa:Md}
Define the rulegraph $\mathsf{M}^d$, with position set $\mathcal P^d(\{\emptyset\})$, where $\mathcal P^d$ is the power set operator $\mathcal P$ composed with itself $d$ times, and the pair $(A,B)$ is an arrow if and only if $B\in A$. Notice that $\mathsf{M}^d$ has ${}^d2$ positions and $\fbd(\mathsf{M}^d)=d$.
The rightmost graph of Figure~\ref{fig:small ruleset graphs 1} is isomorphic to $\mathsf{M}^2$.
\end{example}

\begin{proposition}
There exists a simple rulegraph with $m$ positions and formal birthday $d$ if and only if $d+1\leq m \leq {}^d2$.
\end{proposition}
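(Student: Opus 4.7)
The plan is to prove the two directions separately. For necessity, I would suppose $\mathsf{R}$ is a simple rulegraph with $m$ positions and $\fbd(\mathsf{R})=d$. To obtain the lower bound $m\geq d+1$, observe that some position $p_d$ has $\fbd(p_d)=d$, and since $\fbd(p)=\sup\{\fbd(q)+1\mid q\in\Opt(p)\}$, whenever $\fbd(p)=k+1$ is a successor there is an option $q\in\Opt(p)$ with $\fbd(q)=k$. Iterating downward from $p_d$ produces positions $p_d,p_{d-1},\ldots,p_0$ of strictly decreasing formal birthdays, hence pairwise distinct, giving $m\geq d+1$.

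For the upper bound $m\leq {^{d}2}$, the essential tool is Proposition~\ref{prop:injective}, which says simplicity is equivalent to injectivity of $\Opt$. Let $N_k$ denote the number of positions of formal birthday at most $k$. Since each such position is determined by its option set, a subset of $V_{\leq k-1}$, injectivity of $\Opt$ gives $N_k\leq 2^{N_{k-1}}$. Combined with the base case $N_0\leq 1$ (at most one terminal), an easy induction yields $N_k\leq {^{k}2}$, and hence $m=N_d\leq {^{d}2}$.

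For sufficiency, the idea is to build a simple rulegraph with exactly $m$ positions and formal birthday $d$ for each $m$ in the valid range, by induction on $m$ starting from a chain. The base case $m=d+1$ is the linear chain $p_d\to p_{d-1}\to\cdots\to p_0$, which is simple (the singleton option sets $\{p_{k-1}\}$ and the empty set are pairwise distinct) and has formal birthday $d$. For the inductive step, suppose $\mathsf{R}$ is a simple rulegraph with $m$ positions, $\fbd(\mathsf{R})=d$, and $m<{^{d}2}$; I would produce a simple rulegraph $\mathsf{R}'$ with $m+1$ positions and $\fbd(\mathsf{R}')=d$. Writing $n_k$ for the number of positions of formal birthday exactly $k$, simplicity forces
\[
n_k\leq 2^{N_{k-1}}-2^{N_{k-2}}
\]
for each $k\geq 1$, since such positions have distinct option sets drawn from the subsets of $V_{\leq k-1}$ that contain at least one position of birthday $k-1$. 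If all these inequalities were equalities then $N_k=2^{N_{k-1}}$ for every $k$, forcing $m=N_d={^{d}2}$, a contradiction. So some level $k\in\{1,\ldots,d\}$ admits a fresh subset $S\subseteq V_{\leq k-1}$ containing a position of birthday $k-1$ and distinct from every existing $\Opt(p)$; adjoining a new position $p^*$ with $\Opt(p^*)=S$ produces the desired $\mathsf{R}'$.

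The main obstacle I anticipate is verifying that the added position truly preserves both simplicity and formal birthday. Injectivity of $\Opt$ on $\mathsf{R}'$ is immediate from the freshness of $S$; the formal birthday of each existing position is unchanged because its option set is untouched; and $\fbd(p^*)=k\leq d$ since $S\subseteq V_{\leq k-1}$ with some element of birthday $k-1$. Thus $\fbd(\mathsf{R}')=d$ and $\mathsf{R}'$ is simple by Proposition~\ref{prop:injective}, completing the induction. A small subtlety worth stating explicitly is that the counting argument genuinely produces strictness at a positive level $k$ (not at $k=0$, where simplicity already forces $n_0=1$), so the step of adding a new position is always available.
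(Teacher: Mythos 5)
Your proof is correct, but it follows a genuinely different route from the paper's in both directions. For the upper bound $m\leq {}^d2$, the paper applies Proposition~\ref{prop:difNonIso} to conclude that $\Gamma(\mathsf{R})$ consists of $m$ pairwise non-isomorphic gamegraphs of formal birthday at most $d$ and then cites the count in Proposition~\ref{prop:number of rulegraphs}, whereas your level-by-level bound $N_k\leq 2^{N_{k-1}}$ uses only the injectivity of $\Opt$ from Proposition~\ref{prop:injective} and is more elementary and self-contained. For sufficiency, the paper works top-down: it starts from the maximal rulegraph $\mathsf{M}^d$ of Example~\ref{exa:Md} with ${}^d2$ positions and deletes positions outside the chain $P^d$ in decreasing order of formal birthday so that no surviving option set is disturbed; you work bottom-up from the chain, using the inequalities $n_k\leq 2^{N_{k-1}}-2^{N_{k-2}}$ to show that whenever $m<{}^d2$ some level admits a fresh option set, so a new position can be adjoined without changing existing birthdays or breaking injectivity of $\Opt$. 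Both constructions need a nontrivial verification (yours that strictness occurs at a positive level, the paper's that the deletion order preserves option sets), and you handle yours correctly, including the point that $n_0=1$ is forced. What the paper's route buys is reuse of machinery already developed for the enumeration section; what yours buys is independence from Propositions~\ref{prop:difNonIso} and~\ref{prop:number of rulegraphs} and from the explicit construction of $\mathsf{M}^d$.
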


\begin{proof}
Assume that $\fbd(\mathsf{R})=d$ for a simple rulegraph $\mathsf{R}$ with $m$ positions.  It is clear that $d+1\leq m$.  
If $p$ is a position of $\mathsf{R}$, then $\fbd(\mathsf{R}_p)\le d$. 
Proposition~\ref{prop:difNonIso} implies that $\Gamma(\mathsf{R})$ consists of $m$ non-isomorphic gamegraphs. Hence $m=|\Gamma(\mathsf{R})|\leq {}^d2$ by 
Proposition~\ref{prop:number of rulegraphs}.

To show the other implication, let $p_0:=\emptyset$ and $p_{k}:=\{p_{k-1}\}$ for $1\le k\leq d$. Define $P^d:=\{p_0,\ldots,p_d\}\subseteq V(\mathsf{M}^d)$. Suppose $d+1 \leq k < {}^d2$.  We can construct a desired simple rulegraph $\mathsf{R}$ with $k$ positions and $\fbd(\mathsf{R})=d$ from $\mathsf{M}^d$ by removing  positions from $V(\mathsf{M}^d)\setminus P^d$ one by one, making sure that we never remove a position of formal birthday $b-1$ until we have removed all positions of $V(\mathsf{M}^d)\setminus P^d$ of formal birthday $b$.
\end{proof}

\begin{proposition}
For each $d$, there exists a unique simple rulegraph with ${}^d2$ positions and formal birthday $d$ up to isomorphism.
\end{proposition}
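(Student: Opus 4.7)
The plan is to dispatch existence by exhibiting $\mathsf{M}^d$ itself, and uniqueness by showing that any simple rulegraph $\mathsf{R}$ with $|V(\mathsf{R})|={}^d 2$ and $\fbd(\mathsf{R})=d$ is isomorphic to $\mathsf{M}^d$ via the recursive ``identify each position with its set of options'' construction sketched in Remark~\ref{rem:slim equal traditional}.

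For existence, the $\Opt$ function on $\mathsf{M}^d$ sends each position $A$ to the set of its elements, so distinct positions have distinct option sets and Proposition~\ref{prop:injective} gives simplicity; Example~\ref{exa:Md} already records that $\mathsf{M}^d$ has ${}^d 2$ positions and formal birthday $d$.

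For uniqueness, given such an $\mathsf{R}$, I would define $\phi\colon V(\mathsf{R})\to V(\mathsf{M}^d)$ by transfinite recursion on formal birthday via
\[
\phi(p) := \{\phi(q)\mid q\in\Opt(p)\}.
\]
A short induction on $\fbd(p)$ confirms that $\phi(p)\in\mathcal{P}^{\fbd(p)}(\{\emptyset\})\subseteq V(\mathsf{M}^d)$. Injectivity follows since $\phi(p)=\phi(p')$, together with the inductive hypothesis applied to the options, forces $\Opt(p)=\Opt(p')$, and then simplicity of $\mathsf{R}$ gives $p=p'$ by Proposition~\ref{prop:injective}. Arrow preservation is immediate in both directions from the definition of $\phi$ together with its injectivity: $q\in\Opt(p)$ if and only if $\phi(q)\in\phi(p)$, which is exactly the edge relation in $\mathsf{M}^d$.

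The one place that requires care is surjectivity. When $d$ is finite, it follows at once from $|V(\mathsf{R})|={}^d 2=|V(\mathsf{M}^d)|$ together with injectivity of $\phi$. For arbitrary $d$, I would argue level by level: by Proposition~\ref{prop:difNonIso} the map $p\mapsto[\mathsf{R}_p]$ is an injection into the isomorphism classes of simple gamegraphs with formal birthday at most $d$, of which there are ${}^d 2$ by Proposition~\ref{prop:number of rulegraphs}, so it is in fact a bijection and the number of positions of $\mathsf{R}$ of each given formal birthday matches the corresponding count in $\mathsf{M}^d$. Thus the restriction of $\phi$ to each formal-birthday level is a bijection, which together with the preceding paragraph yields a rulegraph isomorphism $\mathsf{R}\cong\mathsf{M}^d$. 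This surjectivity bookkeeping is the only real obstacle; everything else is just the structural recursion of Remark~\ref{rem:slim equal traditional}.
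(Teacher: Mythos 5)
Your proof is correct, but it takes a more explicit route than the paper. The paper's uniqueness argument stays at the level of isomorphism classes: by Proposition~\ref{prop:difNonIso} the map $p\mapsto \mathsf{R}_p$ realizes ${}^d2$ pairwise non-isomorphic simple gamegraphs of formal birthday at most $d$, which by Proposition~\ref{prop:number of rulegraphs} is \emph{all} of them, so the set $\Gamma(\mathsf{R})$ is forced, and the paper then asserts (somewhat tersely) that the arrows between the corresponding positions are forced as well. You instead build the isomorphism concretely as the set-theoretic collapse $\phi(p)=\{\phi(q)\mid q\in\Opt(p)\}$ of Remark~\ref{rem:slim equal traditional}, deriving injectivity from extensionality of $\Opt$ (Proposition~\ref{prop:injective}) and arrow preservation directly from the definition; for surjectivity your finite-cardinality count $|V(\mathsf{R})|={}^d2=|V(\mathsf{M}^d)|$ suffices, since ${}^d2$ is only defined for natural $d$, which makes your ``arbitrary $d$'' level-by-level argument via Propositions~\ref{prop:difNonIso} and~\ref{prop:number of rulegraphs} unnecessary (though it is the same counting input the paper uses, so it is not wrong, just redundant). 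What your version buys is that the step the paper leaves implicit --- why the arrow structure is determined once the positions are identified with gamegraphs --- becomes an immediate consequence of $q\in\Opt(p)\iff\phi(q)\in\phi(p)$; what the paper's version buys is brevity and a cleaner conceptual statement that $\mathsf{M}^d$ is the unique simple rulegraph realizing every simple gamegraph of formal birthday at most $d$.
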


\begin{proof}
The rulegraph $\mathsf{M}^d$ from Example~\ref{exa:Md} has ${}^d2$ positions and formal birthday $d$. To see that it is unique (up to isomorphism), note that for any rulegraph $\mathsf{R}$ from the proposition statement, the set $\Gamma(\mathsf{R})$ is a set of ${}^d2$ nonisomorphic gamegraphs with formal birthday at most $d$. As this equals the total number of such gamegraphs, this set is uniquely determined. As the arrows between positions corresponding to different gamegraphs are also uniquely determined, we have what was to be shown.
\end{proof}

\begin{proposition}
There are $2^{\binom{d+1}{2}-d}$ simple rulegraphs  with $d+1$ positions and formal birthday $d$ up to isomorphism.
\end{proposition}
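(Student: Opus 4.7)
The plan is to show that a simple rulegraph $\mathsf{R}$ with $d+1$ positions and $\fbd(\mathsf{R}) = d$ has an essentially linear backbone: exactly one position of each formal birthday $0, 1, \ldots, d$. Once this skeleton is fixed, the only freedom in building $\mathsf{R}$ lies in choosing the ``back-arrows'' from each level down to strictly lower levels, and the counting becomes a straightforward product.

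First I would observe, directly from the recursive definition $\fbd(p) = \sup\{\fbd(q)+1 \mid q \in \Opt(p)\}$, that whenever $\mathsf{R}$ contains a position of formal birthday $k+1$ it also contains a position of formal birthday $k$. Since $\fbd(\mathsf{R}) = d$, every value in $\{0, 1, \ldots, d\}$ must be realized as the formal birthday of at least one position. As $|V(\mathsf{R})| = d+1$, each value is realized by exactly one position, giving a unique labeling $p_0, p_1, \ldots, p_d$ with $\fbd(p_i) = i$.

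Next I would describe the possible option sets. For each $i \ge 1$, the condition $\fbd(p_i) = i$ forces $\Opt(p_i) \subseteq \{p_0, \ldots, p_{i-1}\}$ and $p_{i-1} \in \Opt(p_i)$; the remaining $i-1$ positions $p_0, \ldots, p_{i-2}$ may be freely included or excluded, yielding $2^{i-1}$ possibilities. The terminal $p_0$ has $\Opt(p_0) = \emptyset$. I would then check simplicity via Proposition~\ref{prop:injective}: for $i < j$ we have $p_{j-1} \in \Opt(p_j)$ but $p_{j-1} \notin \Opt(p_i)$ because $j-1 \ge i$ places $p_{j-1}$ outside $\{p_0, \ldots, p_{i-1}\}$; hence $\Opt$ is automatically injective no matter which choices are made.

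To conclude the enumeration up to isomorphism, I would note that because each $p_i$ is characterized by its formal birthday (the value $i$), any isomorphism between two such rulegraphs must identify $p_i$ with $p_i$, so different choices of option sets yield non-isomorphic rulegraphs. Multiplying the independent choices gives
\[
\prod_{i=1}^{d} 2^{i-1} = 2^{0+1+\cdots+(d-1)} = 2^{\binom{d}{2}} = 2^{\binom{d+1}{2}-d},
\]
as claimed. The only place requiring care is the opening step that pins down exactly one position per formal birthday, and even there the argument reduces to unpacking the recursive definition of $\fbd$; the rest is essentially bookkeeping.
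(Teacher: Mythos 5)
Your proof is correct and takes essentially the same approach as the paper: a forced directed-path backbone $p_d \to p_{d-1} \to \cdots \to p_0$ (one position per formal birthday) together with a free choice of the remaining $\binom{d+1}{2}-d$ downward arrows. You simply make explicit the details the paper leaves implicit, namely that every such rulegraph has exactly one position per birthday, that simplicity is automatic, and that distinct arrow selections give non-isomorphic rulegraphs.
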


\begin{proof}
To construct such a rulegraph, we start with a directed path of length $d$. 
The inclusion of any selection of the remaining $\binom{d+1}{2}-d$ possible arrows produces a different rulegraph that satisfies the requirements.
\end{proof}

Note that all of the rulegraphs described in the previous result are actually gamegraphs.

\subsection{Counting by the number of positions}

Next we enumerate simple rulegraphs and gamegraphs by the number of positions. 
A set $S$ is called \emph{full} or \emph{transitive} if every element of $S$ is also a subset of $S$. The number $\hat{e}_{n}$ of full sets with $n$ elements is featured as sequence \cite[A001192]{oeis}. The number of extensional acyclic digraphs with $n$ vertices is also $\hat{e}_{n}$, as shown in \cite{extensionalAcyclic}. The number $\hat{e}_{n,s}$ of acyclic extensional digraphs with with $n$ vertices and $s$ sources is studied in \cite{DigraphParameters}.

\begin{proposition}
The numbers of simple rulegraphs and simple gamegraphs with $n$ vertices are $\hat{e}_{n}$ and $\hat{e}_{n,1}$, respectively. 
\end{proposition}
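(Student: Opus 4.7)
The plan is to reduce each count to its combinatorial counterpart by chaining the definitions. For a finite digraph, the absence of infinite directed walks is equivalent to acyclicity, since any infinite walk in a finite digraph must revisit a vertex and so produce a cycle, and conversely a cycle spawns an infinite walk. Hence a finite rulegraph is exactly a finite acyclic digraph. Next, Proposition~\ref{prop:injective} identifies the simple rulegraphs as those whose $\Opt$ map is injective, meaning no two positions share the same out-neighborhood; this is precisely the extensional condition used in \cite{extensionalAcyclic,DigraphParameters}. Combining these two observations, the simple rulegraphs with $n$ positions, viewed up to isomorphism, are exactly the extensional acyclic digraphs on $n$ vertices up to isomorphism, so their number is $\hat{e}_{n}$ by the result cited from \cite{extensionalAcyclic}.

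For simple gamegraphs, I would add the extra requirement that the digraph has a unique source. By Proposition~\ref{prop:reachable} and the remark following it, a finite rulegraph with a unique source is automatically a gamegraph, so there is no separate condition about every position being a subposition of the source to impose. Therefore the simple gamegraphs with $n$ positions are exactly the extensional acyclic digraphs on $n$ vertices with exactly one source, up to isomorphism, and their count is $\hat{e}_{n,1}$ by definition of the parameter in \cite{DigraphParameters}.

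The only point that deserves careful phrasing is the last step: one must note that an isomorphism of digraphs is the same as an isomorphism of (rule)graphs in the appropriate category (as established earlier in Section~5, where we showed the isomorphisms in $\mathbf{RGph}$ and $\mathbf{GGph}$ are the digraph isomorphisms), so counting up to digraph isomorphism matches counting up to isomorphism in our categories. There is no genuine obstacle here; the statement is essentially a translation between our vocabulary (``rulegraph'', ``gamegraph'', ``simple'') and the standard combinatorial vocabulary (``finite acyclic'', ``extensional'', ``unique source''), and the proof amounts to verifying these three dictionary entries.
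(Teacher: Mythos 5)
Your proof is correct and takes essentially the same route as the paper's, which is a one-line reduction observing that simple rulegraphs are exactly the extensional acyclic digraphs and then citing the known enumerations; you simply spell out the dictionary (finite with no infinite walk = acyclic, injective $\Opt$ map = extensional via Proposition~\ref{prop:injective}, unique source for gamegraphs via Proposition~\ref{prop:reachable}) in more detail. The extra care about isomorphism in $\mathbf{RGph}$ versus digraph isomorphism is a reasonable addition but not a genuine difference in approach.
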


\begin{proof}
Proposition~\ref{prop:slim iff injective} implies that simple rulegraphs are exactly the acyclic extensional digraphs.
\end{proof}

Table~\ref{cvert} shows some values of $\hat{e}_{n}$ and $\hat{e}_{n,1}$ as computed in \cite{extensionalAcyclic}.

\begin{table}[ht]
\renewcommand{\arraystretch}{1.3}
\[
\begin{array}{cccccccccc}
\hline
n&1&2&3&4&5&6&7&8&9  \\\hline
\text{rulegraphs }\hat{e}_{n}&1& 1&2& 9&88& 1802& 75598&6{,}421{,}599& 1{,}097{,}780{,}312\\
\text{gamegraphs } \hat{e}_{n,1}&1& 1&2& 8& 68& 1248& 48640&3{,}944{,}336&655{,}539{,}168\\
\hline
\end{array}
\]
\caption{The number of simple rulegraphs and gamegraphs with $n$ vertices.}
\label{cvert}
\end{table}

\section{Further questions}

We finish with a list of open problems.
\begin{enumerate}
\item 
In light of Proposition~\ref{prop:sourceIffSurj} and Example~\ref{ex:option preserving not surjective}, what hypotheses on rulegraphs are equivalent to surjectivity for option preserving maps?
\item Example~\ref{ex:mouse} shows that the gamegraph model allows outcome functions that are not constant on the terminal positions.  Studying these outcome functions might be fruitful.
\item  Are there analogs to the other isomorphism theorems from universal algebra? 
\item The connection between rulegraphs and collections of gamegraphs could be explored more deeply. What is the ``best'' rulegraph for a given collection of gamegraphs? What does ``best'' even mean?

\item Can we characterize the lattices that we get as $\Con(\mathsf{R})$? 
\item  If we model the partizan situation using edge-colored digraphs, can we obtain analogous results?
\item  Can our results be extended for loopy games? The development seems to be more complex. For example, the $\Opt$ map is injective on the directed cycle with three vertices but this loopy game is not simple because the map that takes each vertex to the single vertex of a loop is option preserving. It is possible that the approach described in Remark~\ref{rem:altBowtie} is needed. Similar work was done in \cite[Section 1.3]{SiegelThesis}.
\end{enumerate}

\section*{Acknowledgments}
The authors would like to thank the organizers of Combinatorial Game Theory Colloquium IV, where this collaboration began. The first author was supported by the Ministry of Science, Technological Development and Innovation of Serbia (grant no.\@ 451-03-47/2023-01/200125), and the fourth author was supported by the same Ministry through Mathematical Institute of the Serbian Academy of Sciences and Arts.

\bibliographystyle{plain}
\bibliography{game}

\begin{thebibliography}{10}

\bibitem{albert2007lessons}
Michael~H. Albert, Richard~J. Nowakowski, and David Wolfe.
\newblock {\em Lessons in {P}lay: {A}n {I}ntroduction to {C}ombinatorial {G}ame
  {T}heory}.
\newblock CRC Press, 2007.

\bibitem{EmulationalEquivalence}
Bojan Ba\v{s}i\'c, Nikola Milosavljevi\'c, and Danijela Popovi\'c.
\newblock On emulational equivalence of impartial games and the game
  {H}ackenforb.
\newblock {\em Contrib. Discrete Math.}
\newblock (to appear).

\bibitem{BeneshGeodetic}
Bret~J. Benesh, Dana~C. Ernst, Marie Meyer, Sarah Salmon, and N\'andor Sieben.
\newblock Impartial geodetic building games on graphs, 2023.

\bibitem{BuckleyGeodetic}
Fred Buckley and Frank Harary.
\newblock Geodetic games for graphs.
\newblock {\em Quaest. Math.}, 8:321--334, 1986.

\bibitem{ONAG}
John~Horton Conway.
\newblock {\em On {N}umbers and {G}ames.}
\newblock Natick, MA: A K Peters, 2nd ed. edition, 2001.

\bibitem{HahnTardiff}
Ge{\v{n}}a Hahn and Claude Tardif.
\newblock Graph homomorphisms: {Structure} and symmetry.
\newblock In {\em Graph symmetry: algebraic methods and applications.
  Proceedings of the NATO Advanced Study Institute and s\'eminaire de
  math\'ematiques sup\'erieures, Montr\'eal, Canada, July 1-12, 1996}, pages
  107--166. Dordrecht: Kluwer Academic Publishers, 1997.

\bibitem{HaynesGeodetic}
Teresa~W. Haynes, Michael~A. Henning, and Charlotte Tiller.
\newblock Geodetic achievement and avoidance games for graphs.
\newblock {\em Quaest. Math.}, 26(4):389--397, 2003.

\bibitem{DigraphParameters}
Laurence Kirby.
\newblock Digraph parameters and finite set arithmetic.
\newblock {\em Math. Log. Q.}, 61(4-5):250--262, 2015.

\bibitem{oeis}
{OEIS Foundation Inc.}
\newblock The {O}n-{L}ine {E}ncyclopedia of {I}nteger {S}equences.
\newblock Published electronically at \url{http://oeis.org}.

\bibitem{extensionalAcyclic}
Alberto Policriti and Alexandru~I. Tomescu.
\newblock Counting extensional acyclic digraphs.
\newblock {\em Inf. Process. Lett.}, 111(16):787--791, 2011.

\bibitem{SiegelBook}
Aaron~Nathan Siegel.
\newblock {\em Combinatorial {G}ame {T}heory}, volume 146 of {\em Graduate
  Studies in Mathematics}.
\newblock American Mathematical Society, Providence, RI, 2013.

\bibitem{SiegelThesis}
Aaron~Nathan Siegel and Elwyn Berlekamp.
\newblock {\em Loopy Games and Computation}.
\newblock PhD thesis, University of California, Berkeley, USA, 2005.
\newblock AAI3196608.

\end{thebibliography}

\end{document}